\documentclass[11pt]{amsart}

\usepackage[T1]{fontenc}
\usepackage[english]{babel}
\usepackage[utf8]{inputenc}

\usepackage{amsmath,amsthm,amssymb,amsfonts}
\usepackage{enumerate,amscd,amsxtra,MnSymbol}
\usepackage{mathrsfs}
\usepackage{bbm}

\usepackage{geometry}
\usepackage{color}
\usepackage[cmtip,all]{xy}
\usepackage{tikz} 
\usepackage{tikz-cd} 
\usepackage{tikz-qtree} 
\usepackage{forest}
\usetikzlibrary{arrows,calc}

\usepackage{graphicx}

\usepackage{xparse}

\usepackage[normalem]{ulem}

\usepackage{hyperref}
\usepackage[capitalise]{cleveref} 

\tikzstyle{d}=[double distance=.3ex]
\tikzstyle{w}=[preaction={draw=white, -,line width=4pt}]
\NewDocumentCommand{\punctuation}{ m m O{5pt} }{\node at ($(#1.east)-(0,#3)$) {#2};}

\newcounter{diagram}
\renewcommand{\thediagram}{\thetheorem}

\tikzset{%
node distance=1.5cm, la/.style={scale=0.8}, rr/.style={xshift=1.5cm},
space/.style={xshift=.5cm}, over/.style={auto=false,fill=white,inner sep=1.5pt, minimum size=0, outer sep=0},
    symbol/.style={%
        draw=none,
        every to/.append style={%
            edge node={node [sloped, allow upside down, auto=false]{$#1$}}},
            
    }, pro/.style={postaction={decorate,decoration={
        markings,
        mark=at position .5 with {\node at (0,0) {$\bullet$};}
      }},
      inner sep=.9ex,
      },
  n/.style={double equal sign distance, -implies}, t/.style={double distance=2.5pt, -implies, postaction={draw,-}},
}

\usepackage{mathtools}

\theoremstyle{definition}

\newtheorem{theorem}{Theorem}[section]
\newtheorem*{theorem*}{Theorem}
\newtheorem{lemma}[theorem]{Lemma}
\newtheorem{proposition}[theorem]{Proposition}
\newtheorem*{proposition*}{Proposition}
\newtheorem{corollary}[theorem]{Corollary}
\newtheorem*{corollary*}{Corollary}

\newtheorem{construction}[theorem]{Construction}

\newtheorem{introtheorem}{Theorem}

\newtheorem{definition*}{Definition}

\newtheorem{conjecture*}{Conjecture}

\newtheorem{notation}[theorem]{Notation}

\newtheorem{remark}[theorem]{Remark}
\newtheorem{remark*}{Remark}

\newtheorem{definition}[theorem]{Definition}

\newcommand{\caR}{{\mathcal R}}
\newcommand{\caC}{{\mathcal C}}
\newcommand{\caD}{{\mathcal D}}
\newcommand{\caB}{{\mathcal B}}
\newcommand{\caA}{{\mathcal A}}
\newcommand{\caE}{{\mathcal E}}

\newcommand{\caM}{{\mathcal M}}
\newcommand{\caN}{{\mathcal N}}

\newcommand{\caH}{{\mathcal H}}

\newcommand{\caP}{{\mathcal P}}
\newcommand{\caK}{{\mathcal K}}

\newcommand{\caF}{{\mathcal F}}
\newcommand{\caV}{{\mathcal V}}

\newcommand{\D}{{\mathsf D}}
\renewcommand{\L}{{\mathbb L}}

\newcommand{\op}{\mathsf{op}}
\newcommand{\dual}{\vee}

\newcommand{\Sp}{\mathsf{Sp}}

\newcommand{\y}{\mathsf{y}}

\newcommand{\colim}{\mathrm{colim}}

\newcommand{\Mod}{{\mathsf{Mod}}}
\newcommand{\LMod}{{\mathsf{LMod}}}

\newcommand{\Alg}{{\mathsf{Alg}}}

\newcommand{\Y}{\mathrm{Y}}

\newcommand{\id}{\mathsf{id}}

\newcommand{\Cat}{\mathsf{Cat}}

\newcommand{\map}{\mathrm{map}}

\renewcommand{\Pr}{\mathsf{Pr}}

\newcommand{\Spc}{\mathsf{Spc}}

\newcommand{\Add}{\mathsf{Add}}

\newcommand{\Fun}{\mathsf{Fun}}

\newcommand{\add}{\mathsf{add}}

\newcommand{\perf}{\mathsf{perf}}

\newcommand{\Ar}{\mathsf{Ar}}

\newcommand{\KR}{\mathsf{KR}}

\newcommand{\Wald}{\mathsf{Wald}}

\newcommand{\fiss}{\mathsf{fiss}}
\newcommand{\gd}{\mathsf{gd}}
\newcommand{\Exact}{\mathsf{Exact}}
\renewcommand{\d}{\mathsf{d}}

\newcommand{\Exc}{\mathsf{Exc}}

\newcommand{\ev}{\mathsf{ev}}

\newcommand{\Calg}{\mathsf{CAlg}}
\newcommand{\exc}{\mathsf{exc}}

\newcommand{\triv}{{\mathrm{triv}}}

\usepackage[textwidth=3cm,
textsize=small,
colorinlistoftodos]
{todonotes} 

\title{Real $K$-theory for Waldhausen $\infty$-categories with genuine duality}

\author[H.\ Heine]{Hadrian Heine}
\address{Max Planck Institute f\"ur Mathematik, Bonn, Germany}
\email{heine@mpim-bonn.mpg.de}

\author[M.\ Spitzweck]{Markus Spitzweck}
\address{Fakult\"at f\"ur Mathematik, Universit\"at Osnabr\"uck, Osnabr\"uck, Germany}
\email{markus.spitzweck@uni-osnabrueck.de}

\author[P.\ Verdugo]{Paula Verdugo}
\address{Max Planck Institute f\"ur Mathematik, Bonn, Germany}
\email{verdugo@mpim-bonn.mpg.de}

\title{A multiplicative universal property for real algebraic $K$-theory}

\begin{document}

\begin{abstract}
We prove that the real $K$-theory functor is lax symmetric monoidal.  Moreover, it is initial among those real functors $\Wald_\infty^\gd\to\Sp^{C_2}$ that are additive and lax symmetric monoidal.
\end{abstract}

\maketitle
\setcounter{tocdepth}{1}
\tableofcontents

\section{Introduction}

At its core, with several different incarnations, algebraic $K$-theory takes an algebraic-flavored input and associates to it a spectrum, $K\colon\caC\to\Sp$. Since $\Sp$ has a canonical monoidal structure given by the smash product $\otimes=\wedge$, it is natural to wonder whether the functor of $K$-theory can be made lax symmetric monoidal. 

This question has a long history, in \cite{elmendorf-mandell} Elmendorf and Mandell upgraded the existent algebraic $K$-theory of symmetric monoidal categories \cite{May_loop_spaces,Segal_categories_cohomology} and permutative categories \cite{mandell_inverseK,gurski-johnson-osorno-Ktheory_2cats} to the context of multicategories, where it can be shown to be lax monoidal. Moreover, a multiplicative version of Waldhausen's $S$-construction also in the context of multicategories was studied in \cite{inna_multiplicative}.

Naturally, this question remained active for hermitian/real versions of algebraic $K$-theory. For hermitian $K$-theory, Schlichting has proven it whenever $2$ is invertible in the base ring \cite[Section 5]{schlichting.derived}. For \emph{real} algebraic $K$-theory, the reference is by Hesselholt and Madsen \cite{hesselholt-madsen}.

In the last fifteen years, in independent work, Barwick in \cite{barwick.wald} and Blumberg-Gepner-Tabuada in \cite{bgt} have built frameworks for algebraic $K$-theory for $\infty$-categories, which provided new and powerful tools to the area. 

As part of this, they addressed and upgraded the multiplicativity structure of algebraic $K$-theory. Blumberg-Gepner-Tabuada adapted Quillen's $Q$-construction to stable $\infty$-categories and showed that not only algebraic $K$-theory is lax symmetric monoidal, but moreover it is initial among additive and multiplicative functors when restricting the source to idempotent complete stable $\infty$-categories.  Barwick, in turned, built $K$-theory using $\infty$-categorical versions of the $S$-construction and Waldhausen categories, and showed such universal property among additive and multiplicative functors on all Waldhausen $\infty$-categories. 

In this work, we settle these questions for real algebraic $K$-theory in the $\infty$-context. Specifically, in the framework developed in \cite{hsv,comparisonKth} through a refinement of the $S$-construction for Waldhausen $\infty$-categories with genuine duality. From now on, the word ``real'' here is largely used to mean enriched in the $\infty$-category of genuine $C_2$-spaces, $\Spc^{C_2}$.

\begin{introtheorem}[\cref{thm:univ_prop_Sp}]
The real $K$-theory functor $\KR\colon\Wald^\gd_\infty\to \Sp^{C_2}$ is lax symmetric monoidal.
\end{introtheorem}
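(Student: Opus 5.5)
Since the statement refers forward to \cref{thm:univ_prop_Sp}, I plan to obtain lax monoidality as a consequence of a universal property, following the strategy of Blumberg--Gepner--Tabuada and Barwick in the real setting. The idea is to exhibit $\KR$ as a composite of functors, each of which is lax symmetric monoidal for formal reasons.

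\emph{Step 1: symmetric monoidal structure on the source.} First I equip $\Wald^\gd_\infty$ with a symmetric monoidal structure. The tensor product of two Waldhausen $\infty$-categories with genuine duality should corepresent ``biexact'' functors compatible with dualities. Concretely, I would present $\Wald^\gd_\infty$ as a localization of a presentable $\infty$-category, namely $C_2$-objects or genuine-duality objects in $\Wald_\infty$. I then transfer Barwick's tensor product on $\Wald_\infty$, made $C_2$-equivariant by having $C_2$ act diagonally on the duality. A key check is that the genuine refinement, the $C_2$-space of hermitian objects, is compatible with the pairing. I would handle this by describing the genuine structure as a compatible map from the $C_2$-space of symmetric objects, and tensoring these maps.

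\emph{Step 2: multiplicative real $S$-construction.} Next I would show that the real edgewise $S$-construction $S^e_\bullet$ of \cite{hsv,comparisonKth} is lax symmetric monoidal as a functor to simplicial objects in $\Wald^\gd_\infty$. The structure maps are $S^e_n\caC\otimes S^e_m\caD\to S^e_{n,m}(\caC\otimes\caD)$, landing in the bisimplicial version, followed by the comparison between the iterated and diagonal constructions. The real $K$-theory is then the $C_2$-spectrum with spaces $|\iota S^{e,(n)}|$, which carries a $\rho$-delooping structure. Passing to $C_2$-spectra amounts to forming a colimit of representation-sphere loops. Lax monoidality of this colimit-of-loops presentation follows from a lax monoidal structure on genuine $C_2$-spaces and the fact that stabilization $\Spc^{C_2}_*\to\Sp^{C_2}$ is symmetric monoidal.

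\emph{Step 3: universal property.} The cleaner route, which I would actually pursue, goes through the additive universal property. Let $\Add$ be the $\infty$-category of additive real functors $\Wald^\gd_\infty\to\Sp^{C_2}$. Then $\KR$ is the additivization of the real suspension spectrum of the genuine core, and the additivization functor is a left Bousfield localization of $\Fun(\Wald^\gd_\infty,\Sp^{C_2})$. I endow the functor category with the Day convolution, using Step 1 on the source. Then I show that the additive equivalences are compatible with Day convolution, so that the localization is symmetric monoidal in the sense of Lurie. It follows that $\KR$, being the localization of the monoidal unit $\Sigma^\infty_{C_2}\iota$ (which is the unit, since the core is corepresented by the unit Waldhausen category), is the unit of $\Add$. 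It is therefore a commutative algebra for Day convolution, and such algebras are exactly lax symmetric monoidal functors.

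\emph{Expected obstacle.} The main difficulty is the compatibility of additive equivalences with Day convolution. This amounts to showing that for fixed $\caD$, the functor $-\otimes\caD$ preserves the real split cofiber sequences generating the localization, meaning that it sends $S^e_2$-type sequences to split sequences with genuine duality. I would try to verify this on generators, using that the genuine tensor product commutes with the edgewise $S^e_2$ of a representable.
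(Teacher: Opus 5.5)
Your Step 3 follows the paper's architecture: Day convolution on real functors out of $\Wald^\gd_\infty$, a symmetric monoidal additive localization, and the observation that $\KR$, as the image of the unit $\iota$, is the unit of $\Add$ and hence an $\mathbb{E}_\infty$-algebra. Steps 1 and 2 are not needed; the monoidal structure is \cite[Proposition 7.17]{hsv}.

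Some bookkeeping differs from the paper:
\begin{itemize}
\item The paper works over $(\Wald^\gd_\infty)^{\aleph_1}$ so that the functor category is presentable; your $\Fun(\Wald^\gd_\infty,\Sp^{C_2})$ is not.
\item It works with $\Spc^{C_2}_\ast$-valued theories, then transports the result along the symmetric monoidal equivalence $\Add_{\Sp^{C_2}}\simeq\Add$ induced by $\Omega^\infty$.
\item It treats ``finite products and cotensors with $C_2$'' and ``genuine excisive'' as separate localizations. Each is monoidal because the generating local equivalences tensored with $\y(\caD)$ are again of the same form, and they are glued by the pullback lemma.
\end{itemize}

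The gap is in the one step with real content. On generators you must show that $\y(\gamma_3^\caC)\otimes\y(\caD)\simeq\y(\gamma_3^\caC\otimes\caD)$ is inverted by every additive theory. You propose to do this by identifying $\gamma_3^\caC\otimes\caD$ with $\gamma_3^{\caC\otimes\caD}$, i.e.\ by using that $-\otimes\caD$ commutes with the real $S$-construction (and with $\widetilde{\caC\times\caC}\times\caC$) in $\Wald^\gd_\infty$. Nothing of the kind is established. One only has the natural comparison map $\chi\colon S(\caC)_n\otimes\caD\to S(\caC\otimes\caD)_n$, adjoint to $\Ar([n])\otimes\caC^{\Ar([n])}\otimes\caD\to\caC\otimes\caD$. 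Showing that $\chi$ is an equivalence would be a substantial separate project, especially for the genuine refinements: the one on $\caC\otimes\caD$ is freely generated by $H\times T$, while $S(\caC\otimes\caD)_3$ carries the structure induced on diagrams.

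The missing idea is the paper's retract argument, which avoids this identification entirely. The equivariant split-filtration map $i^\caC$ satisfies $\gamma_3^\caC i^\caC=\id$, so it suffices to show that $(i^\caC\gamma_3^\caC)\otimes\caD$ becomes the identity after localization. Using $\theta\colon S(\caC)_3\to S(S(\caC)_3)_3$ with $\ev_3\theta=\id$, set
\[
\xi=\ev_3\circ(i\gamma_3)^{S(\caC)_3\otimes\caD}\circ\chi\circ(\theta\otimes\caD).
\]
Since $\gamma_3^{S(\caC)_3\otimes\caD}$ is inverted and has section $i$, $\xi$ becomes $\ev_3\circ\chi\circ(\theta\otimes\caD)=(\ev_3\theta)\otimes\caD=\id$. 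On the other hand, moving $i\gamma_3$ past $\chi$ and using $\ev_3\circ(i\gamma_3)^{S(\caC)_3}\circ\theta=(i\gamma_3)^{\caC}$ shows $\xi=(i^\caC\gamma_3^\caC)\otimes\caD$. This uses $\chi$ only as a lax comparison and never identifies $(\widetilde{\caC\times\caC}\times\caC)\otimes\caD$.
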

In \cite{Calmes_etal1} they show that their hermitian $K$-theory functor is lax symmetric monoidal.

One of the motivations for studying the multiplicative structure of algebraic $K$-theory is to obtain a more structured output through ring spectra. We prove this phenomenon for real $K$-theory.

\begin{introtheorem}[\cref{cor:normed_Einfinty_ringspectrum}]
Let $A$ be a normed $\mathbb{E}_\infty$-ring spectrum.
Then  $$\KR(A) \coloneqq\KR(\Mod_A(\Sp)^\perf)$$ is an $\mathbb{E}_\infty$-ring genuine $C_2$-spectrum, where the genuine refinement on $\Mod_A(\Sp)^\perf$ is the genuine symmetric one. In consequence, we obtain a functor $$ \KR\colon \Alg_{\mathbb{N}_\infty}(\Sp^{C_2}) \to  \Alg_{\mathbb{E}_\infty}(\Sp^{C_2}).$$
\end{introtheorem}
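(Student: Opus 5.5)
The plan is to deduce this corollary from the lax symmetric monoidality of $\KR$ (\cref{thm:univ_prop_Sp}) together with a symmetric monoidal functor
\[
\Alg_{\mathbb{N}_\infty}(\Sp^{C_2}) \longrightarrow \Calg(\Wald^\gd_\infty), \qquad A \mapsto (\Mod_A(\Sp)^\perf, \text{genuine symmetric refinement}),
\]
or at least a functor landing in commutative algebras. Composing with $\KR$ then gives the required functor, because a lax symmetric monoidal functor induces a functor on $\Calg$. Here $\Calg(\Sp^{C_2})$ means $\mathbb{E}_\infty$-algebras with respect to the smash product of genuine $C_2$-spectra.

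First I would record the formal step. A lax symmetric monoidal functor $F\colon \caC\to\caD$ induces $\Calg(\caC)\to\Calg(\caD)$, since $\Calg(\caC)=\Fun^{\mathrm{lax}}(\mathrm{Fin}_*,\caC)$. So it suffices to show that $\Mod_A(\Sp)^\perf$ with its genuine symmetric duality is a commutative algebra object of $\Wald^\gd_\infty$, functorially in $A$.

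Second, I would build the refinement. Write $A^e$ for the underlying $\mathbb{E}_\infty$-ring of $A$. The normed structure on $A$ gives a $C_2$-equivariant Tate-valued Frobenius, i.e.\ a map $A^{\Phi C_2}\leftarrow$ the geometric fixed points, together with $N^{C_2}A^e\to A$. This is what defines the genuine symmetric Poincaré-type structure
\[
Q^{gs}(M)=\mathrm{map}_{A\otimes A}(M\otimes M, A)^{hC_2}\times_{\dots}\dots,
\]
as in \cite{Calmes_etal1} and its translation to genuine dualities in \cite{hsv}. Next I would use the fact that $\Mod_{(-)}(\Sp)^\perf\colon \Calg(\Sp)\to\Calg(\Cat^{\mathrm{st}}_\infty)$ is symmetric monoidal, and upgrade it to normed algebras and to categories with genuine duality. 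The key input is that the hermitian structure on the tensor product $\Mod_A^\perf\otimes\Mod_B^\perf\simeq\Mod_{A\otimes B}^\perf$ is the external tensor of the genuine symmetric structures. This is proved, as in \cite[Section 5]{Calmes_etal1}, by checking on generators $A\otimes B$ and using that the norm is symmetric monoidal: $N^{C_2}(A^e\otimes B^e)\simeq N^{C_2}A^e\otimes N^{C_2}B^e$. Finally I would pass from stable categories with genuine duality to Waldhausen $\infty$-categories with genuine duality via the symmetric monoidal inclusion already used in the paper's framework.

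I expect the main obstacle to be the second step: producing the structure coherently, as a functor into $\Calg$ rather than objectwise. To handle it, I would define the genuine symmetric structure as a lax symmetric monoidal functor of pairs, $(A, N^{C_2}A^e\to A)\mapsto$ the associated hermitian category. I would then invoke the universal property of the norm: normed $\mathbb{E}_\infty$ in $\Sp^{C_2}$ is equivalent to $\Calg$ of the $C_2$-symmetric monoidal structure. This makes the assignment monoidal for formal reasons.
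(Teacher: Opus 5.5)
Your skeleton matches the paper's. Since $\KR$ is lax symmetric monoidal (\cref{thm:univ_prop_Sp}), it suffices to exhibit $\Mod_{A^e}(\Sp)^\perf$ with its genuine refinement as a commutative algebra. The inputs are the symmetric monoidality of the norm and the $\mathbb{E}_\infty$-map $N(A^e)\to A$.

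The gap is that the step carrying all the content is never carried out. That step is producing a \emph{coherent} lax symmetric monoidal structure on the genuine quadratic functor.
\begin{itemize}
\item Checking on generators $A\otimes B$ can only show that an already constructed comparison map is an equivalence. It does not produce the multiplication maps and their higher coherences.
\item Saying that normed algebras are commutative algebras for the $C_2$-symmetric monoidal structure is just the definition of a normed algebra. It does not by itself make any assignment monoidal.
\item Your description of the Frobenius is garbled. What your pullback formula actually needs is the map $\Phi^{C_2}A\to (A^e)^{tC_2}$.
\item You also aim for more than is needed. A symmetric monoidal functor $A\mapsto \Mod_A^\perf$ on normed algebras is stronger than showing that each $\Mod_{A^e}(\Sp)^\perf$ is a commutative algebra, naturally in $A$.
\end{itemize}

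The missing idea, and how the paper argues, is to write the quadratic functor directly as a composite of (lax) symmetric monoidal functors:
\[
(\Mod_{A^e}(\Sp)^\perf)^\op \simeq \Mod_{A^e}(\Sp)^\perf \xrightarrow{N} \Mod_{N(A^e)}(\Sp^{C_2}) \xrightarrow{-\otimes_{N(A^e)}A} \Mod_{A}(\Sp^{C_2}) \to \Sp^{C_2} \xrightarrow{(-)^{C_2}} \Sp .
\]
The pieces are, in order:
\begin{itemize}
\item the symmetric monoidal duality;
\item the norm on modules;
\item base change along $N(A^e)\to A$;
\item the forgetful functor;
\item genuine fixed points.
\end{itemize}
Hence $M\mapsto (N(M^\vee)\otimes_{N(A^e)}A)^{C_2}$ is a lax symmetric monoidal quadratic functor, with no separate coherence argument. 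The construction is natural in $A$, which is what yields the functor out of $\Alg_{\mathbb{N}_\infty}(\Sp^{C_2})$.

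Your pullback description can be rescued along the same lines. By isotropy separation it computes the same functor. Each of its three terms is lax symmetric monoidal in $M$ once you know that $\Phi^{C_2}A\to (A^e)^{tC_2}$ is a map of $\mathbb{E}_\infty$-$A^e$-algebras compatible with the Tate diagonal, which the normed structure provides. Pullbacks of commutative algebras for Day convolution are computed pointwise, so the pullback is then lax symmetric monoidal too.

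Finally, the passage to $\Wald^\gd_\infty$ cannot be along a strongly symmetric monoidal inclusion. The unit of $\Wald^\gd_\infty$ is the additive category of finite sums of spheres, not $\Sp^\omega$. What you need is only that the inclusion is lax symmetric monoidal, which suffices to transport commutative algebras.
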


In this work we further prove a universal property for $\KR$, providing a real version of the universal property of \cite{barwick.mult} in the case of (non-real) algebraic $K$-theory.

\begin{introtheorem}[Universal property, \cref{thm:univ_prop_Sp}]\label{intro_universal}
The real $K$-theory functor $\KR\colon\Wald^\gd_\infty\to \Sp^{C_2}$ is initial among those real functors $\Wald^\gd_\infty\to \Sp^{C_2}$ that are additive and lax symmetric monoidal.
\end{introtheorem}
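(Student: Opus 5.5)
The plan follows Barwick's strategy for the non-real case, carried out in the $C_2$-enriched setting. I will realize $\KR$ as a symmetric monoidal localization composed with a spectrification.

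\emph{Step 1.} Equip $\Wald^\gd_\infty$ with a symmetric monoidal structure whose tensor product corepresents biexact, duality-compatible pairings. This is the real analogue of Barwick's tensor product of Waldhausen $\infty$-categories. I expect it to come from the presentation of $\Wald^\gd_\infty$, in the earlier framework of \cite{hsv,comparisonKth}, as a localization of a presentable $\Spc^{C_2}$-enriched $\infty$-category. Day convolution on genuine-duality Waldhausen objects then descends along this localization.

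\emph{Step 2.} Pass to real additive functors $\Wald^\gd_\infty\to\Spc^{C_2}$. Consider the real presheaf category, or rather the real finite-product-preserving functors out of the opposite of compact objects. This carries a Day convolution structure, and the real Yoneda embedding is symmetric monoidal. Additive functors form the localization at the maps encoding the real additivity theorem. For a cofiber sequence with duality, these are the comparison maps $\caS^{e}_3$-type, that is, real $S_\bullet$-edgewise maps.

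The key claim is that this localization is compatible with Day convolution. Concretely, tensoring a generating additivity equivalence with a representable object must again give a local equivalence. To prove it, I would use that tensoring with a fixed object $\caC$ preserves the relevant cofiber-sequence diagrams. The $\mathsf{S}^e$-construction commutes with $-\otimes\caC$ levelwise, since the real $S$-construction is corepresented by the genuine-duality categories of filtered objects, and these are stable under the tensor product. I expect this to be the main obstacle. Duality forces the edgewise subdivision, so one must check that the $C_2$-equivariant structure on $\mathsf{S}^e_\bullet$ is preserved by tensoring. That in turn needs the tensor product of genuine dualities, and control of genuine fixed points of tensor products, which only sits lax over the Hermitian structures.

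\emph{Step 3.} Once the localization $L$ is symmetric monoidal, the initial object among additive lax symmetric monoidal real functors to $\Spc^{C_2}$ is $L$ applied to the unit. This uses that lax symmetric monoidal functors out of a Day-convolution category correspond to commutative algebras in the localized functor category. The initial such algebra is the localized unit, i.e.\ the localized corepresentable on the unit Waldhausen category. By the real additivity theorem of the earlier sections, this localized unit is $\Omega^{\sigma}|\mathsf{S}^e_\bullet|$, i.e.\ $\KR$ viewed as an additive space-valued functor.

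\emph{Step 4.} For $\Sp^{C_2}$-valued functors, use that additive real functors to $\Sp^{C_2}$ are the real-stabilization of additive functors to $\Spc^{C_2}$. Since $\Sp^{C_2}$ is obtained from $\Spc^{C_2}$ by inverting $S^\rho$ symmetric monoidally, stabilization is a symmetric monoidal left adjoint. It carries the unit algebra to the unit algebra. Hence $\KR$, the stabilization of the space-level object, is lax symmetric monoidal and initial. This proves both \cref{thm:univ_prop_Sp} parts at once.
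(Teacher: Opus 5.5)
Your overall architecture is the same as the paper's: Day convolution on real theories, a symmetric monoidal localization onto $\Add$, $\KR=\add(\iota)$ as the localized unit, and transfer to $\Sp^{C_2}$ through the symmetric monoidal equivalence $\Add_{\Sp^{C_2}}\simeq\Add$ of \cref{genspec}. Steps 3 and 4 are fine.

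The gap is in the one step with real content: showing that the localization inverting the maps $\gamma_3$ is monoidal. You reduce this to the claim that the real $S$-construction commutes with $-\otimes\caD$ levelwise, so that $\gamma_3^{\caC}\otimes\caD$ would simply be $\gamma_3^{\caC\otimes\caD}$. That claim is not available.

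\begin{itemize}
\item $S(\caC)_n$ is a cotensor-type construction: a subcategory of $\caC^{\Ar([n])}$, or in your phrasing $\underline{\mathrm{Hom}}(P_n,\caC)$ for a free object $P_n$.
\item Corepresentability only yields a natural comparison map $\chi\colon S(\caC)_n\otimes\caD\to S(\caC\otimes\caD)_n$ (\cref{rmk:Sconstruction_monoidal}). This map is an equivalence only under a dualizability-type hypothesis on $P_n$, which you neither state nor check.
\item On hermitian objects there is a concrete reason to doubt it. By \cref{description_tensor_Waldgd}, $S(\caC)_3\otimes\caD$ carries the \emph{free} genuine refinement generated by products of hermitian objects. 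By contrast, $S(\caC\otimes\caD)_3$ carries the refinement induced from the free one on $\caC\otimes\caD$. This is exactly the laxness you flag and then leave unresolved.
\end{itemize}

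Without invertibility of $\chi$, knowing that $\gamma_3^{\caC\otimes\caD}$ is a local equivalence tells you nothing about $\gamma_3^{\caC}\otimes\caD$.

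The missing idea is a way to use only the existence and naturality of $\chi$. The paper argues as follows.

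\begin{itemize}
\item $\gamma_3$ has a duality-compatible section $i$, namely $(A,B,C)\mapsto(A\to A\oplus C\to A\oplus B\oplus C)$. So it suffices to show that $(i^\caC\gamma_3^\caC)\otimes\caD$ is inverted.
\item Take the natural map $\theta\colon S(\caC)_3\to S(S(\caC)_3)_3$ of \cref{constr:theta}, which satisfies $\ev_3\circ\theta=\id$. Naturality gives $(i^\caC\gamma_3^\caC)\otimes\caD\simeq\ev_3\circ(i\gamma_3)^{S(\caC)_3\otimes\caD}\circ\chi\circ(\theta\otimes\caD)$.
\item The middle factor localizes to the identity, because $\gamma_3^{S(\caC)_3\otimes\caD}$ is itself a generating local equivalence with section $i$.
\item What remains is $\ev_3\circ\chi\circ(\theta\otimes\caD)\simeq(\ev_3\circ\theta)\otimes\caD=\id$.
\end{itemize}

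There are also two smaller omissions.

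\begin{itemize}
\item Real theories must preserve cotensors with $C_2$, not only finite products.
\item Additivity includes genuine excision, $F(\caC)\simeq\Omega^{1,1}F(\Sigma^{1,1}\caC)$. Without it the localized unit is not $\KR$.
\end{itemize}

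Each of these is a separate localization that must be shown monoidal. This is easy, since their generating maps are stable under tensoring with representables. They must then be combined with the $\gamma_3$-localization, for example via \cref{lem:pullback_symetric_loc}.
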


The universal property of \cref{intro_universal} provides a strong tool to study real trace methods. Indeed, we know that given any lax symmetric monoidal real theory, it will be the target of exactly one map out of $\KR$. Furthermore, the proofs provide access to a fairly explicit description of this map.

\subsubsection*{A multiplicative story for non-commutative motives} Following \cite{barwick.wald}, we had shown in \cite{hsv} that the real $K$-theory functors admits a factorization
\[
\begin{tikzcd}
\Wald_\infty^\gd\ar[rr, "\KR"]\ar[dr]        &&\Spc^{C_2}\\
&\caM\ar[ur, "\KR'"']  &
\end{tikzcd}
\]
where $\caM$ may be interpreted as the real $\infty$-category of non-commutative motives, and $\KR'$ is the genuine excisive approximation of the canonical real functor $\iota\colon \caM \to \Spc^{C_2}$ that sends $\caC\in \caM$ to its space of objects. 

\begin{introtheorem}[\cref{Dwaldfiss_monoidal}]
The $\infty$-categories of non-commutative motives $\caM$ admits a real symmetric monoidal structure compatible with that of $\Wald_\infty^\gd$. 
\end{introtheorem}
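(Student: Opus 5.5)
We would build the real symmetric monoidal structure on $\caM$ by transport along localizations, starting from the one on $\Wald_\infty^\gd$. Recall from \cite{hsv} that $\caM$ is obtained from $\Wald_\infty^\gd$ in two moves. First we pass to a real presheaf-type $\infty$-category: real functors $(\Wald_\infty^\gd)^\op \to \Spc^{C_2}$ preserving finite products, i.e.\ the real additive presheaves. Then we localize at the real analogues of the cofiber sequence (split exact) maps, for instance the maps built from the genuine $S_\bullet$-construction. The proof of the statement therefore splits into three steps.

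\textbf{Step 1.} Take the real symmetric monoidal structure on $\Wald_\infty^\gd$ constructed earlier in the paper. Extend it to real additive presheaves via Day convolution, using the real (i.e.\ $\Spc^{C_2}$-enriched, or $C_2$-parametrized) version of Day convolution. Here the tensor product preserves finite coproducts, in fact finite products, of Waldhausen $\infty$-categories with genuine duality separately in each variable. This ensures that additive presheaves are closed under Day convolution after additivization, and that the Yoneda embedding is real symmetric monoidal.

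\textbf{Step 2.} Show that the localization defining $\caM$ is compatible with this structure. Concretely, if $f$ is one of the generating local equivalences and $X$ is a representable, then $f\otimes X$ is again a local equivalence. By the standard criterion for symmetric monoidal localizations, this is all we need: compatibility with representables suffices because the generators are between representables (or colimits thereof) and the tensor product preserves colimits in each variable.

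The key input for Step 2 is that tensoring with a fixed $\caC\in\Wald_\infty^\gd$ is an exact functor of Waldhausen $\infty$-categories with genuine duality. It must preserve cofiber sequences and commute with the genuine $S_\bullet$-construction up to the relevant equivalence, $S_n(\caA)\otimes\caC\simeq S_n(\caA\otimes\caC)$ compatibly with dualities. Granting this, the generating local equivalences are sent to generating local equivalences. In the real setting we must additionally check compatibility with the norm/fixed-point part: the parametrized localization must also be closed under the $C_2$-indexed tensor powers if a normed structure is wanted. For a plain real symmetric monoidal structure, however, the fibrewise check suffices.

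\textbf{Step 3.} The localization functor $\Wald_\infty^\gd\to\caM$ is then real symmetric monoidal, which is the asserted compatibility.

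We expect the main obstacle to be the exactness of $-\otimes\caC$ together with its interaction with the genuine duality in the $S_\bullet$-construction, specifically the identification of genuine fixed points of $S_n(\caA)\otimes\caC$. We would attack this through the universal property of the tensor product in $\Wald_\infty^\gd$ as corepresenting bi-exact duality-preserving functors, reducing to a statement about bi-exact functors out of $S_n(\caA)\times\caC$.
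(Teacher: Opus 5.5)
Your overall architecture matches the paper's: Day convolution on real presheaves, restriction to $\D\Wald_\infty^\gd$, and the standard criterion that $f\otimes\y(\caD)$ be a local equivalence for each generator $f$. Step 2, however, has a genuine gap. It rests entirely on an equivalence $S_n(\caA)\otimes\caC\simeq S_n(\caA\otimes\caC)$ in $\Wald_\infty^\gd$, compatible with genuine dualities. You grant this rather than prove it, and it is exactly the hard point. What is available in general is only the comparison map $\chi\colon S_n(\caA)\otimes\caC\to S_n(\caA\otimes\caC)$ of \cref{rmk:Sconstruction_monoidal}, obtained by adjunction from $\caA^{\Ar([n])}\otimes\caC\to(\caA\otimes\caC)^{\Ar([n])}$. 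Its source carries the \emph{free} genuine refinement generated by $H\times T$. Its target is a cotensor-type construction whose genuine refinement is obtained by restriction. Nothing you say explains why $\chi$ should be an equivalence on genuine hermitian objects. Your suggested attack does not reach this either. Bi-exact duality-preserving functors out of $S_n(\caA)\times\caC$ describe maps \emph{out of} $S_n(\caA)\otimes\caC$. To identify that with $S_n(\caA\otimes\caC)$ you would need a matching universal property for maps out of $S_n(\caB)$, for instance that $S_n(\caB)$ is $S_n$ of the tensor unit tensored with $\caB$, in the genuine setting. That is a substantial statement you do not address. Matching the targets would also require knowing that $-\otimes\caC$ preserves the cotensor $\widetilde{\caA\times\caA}$.

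The paper needs none of this. It never shows that $\chi$ is an equivalence, nor that $\gamma_3^\caC\otimes\caD$ is a generator. The equivariant map $i(A,B,C)=(A\to A\oplus C\to A\oplus B\oplus C)$ satisfies $\gamma_3\circ i\simeq\id$, so $\gamma_3^\caC\otimes\caD$ has the section $i^\caC\otimes\caD$. It therefore suffices that the idempotent $(i^\caC\circ\gamma_3^\caC)\otimes\caD$ becomes the identity after localization. For this the paper uses the natural map $\theta\colon S(\caC)_3\to S(S(\caC)_3)_3$ of \cref{constr:theta}, which satisfies $\ev_3\circ\theta=\id$ and $\ev_3\circ(i\circ\gamma_3)^{S(\caC)_3}\circ\theta\simeq i^\caC\circ\gamma_3^\caC$. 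Tensor with $\caD$, and use naturality of $i\circ\gamma_3$ along $\chi$ together with $\ev_3\circ\chi\simeq\ev_3\otimes\caD$. This rewrites $(i^\caC\circ\gamma_3^\caC)\otimes\caD$ as $\ev_3\circ(i\circ\gamma_3)^{S(\caC)_3\otimes\caD}\circ\chi\circ(\theta\otimes\caD)$. The middle idempotent becomes the identity, because $\gamma_3^{S(\caC)_3\otimes\caD}$ is itself a generator with section $i$. What remains is $(\ev_3\circ\theta)\otimes\caD=\id$. This retraction trick, which uses $\chi$ only as a natural map, is the idea missing from your proposal.

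A smaller point: $\D\Wald_\infty^\gd$ consists of presheaves preserving finite products \emph{and} cotensors with $C_2$. Step 1 therefore also needs the second part of \cref{prop:subcats_are_monodal_localizations}.
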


\begin{introtheorem}[\cref{motive_lax_sym,prop:KR'_initial}]\label{intro_motive_universal_prop}
The real funcor $\KR'\colon \caM\to \Sp^{C_2}$ is lax symmetric monoidal. Moreover, it is initial among left adjoint and genuine excisive lax symmetric monoidal real functors $\caM\to\Sp^{C_2}$.
\end{introtheorem}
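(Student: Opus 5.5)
The approach is to present $\KR'$ as a localization of a Day-convolution type structure, and then read both assertions off a real version of the standard universal property of localizations.

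First, recall from \cite{hsv} how $\caM$ arises. It is a localization of the real $\infty$-category of real presheaves on $\Wald_\infty^\gd$, namely real functors $(\Wald_\infty^\gd)^\op\to\Spc^{C_2}$, at the maps encoding the split (fissile) cofiber sequences. Using the real symmetric monoidal structure on $\caM$ from \cref{Dwaldfiss_monoidal}, I would check that this localization is compatible with the real Day convolution. Concretely, tensoring a generating local equivalence with a representable should again give a local equivalence. This reduces to the fact that $-\otimes\caC$ preserves split cofiber sequences of Waldhausen $\infty$-categories with genuine duality. With that in hand, $\caM$ is a real symmetric monoidal localization of presheaves.

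Second, identify $\KR'$. By construction it is the genuine excisive approximation of the functor $\iota$ sending $\caC$ to its space of objects; composed with $\Sigma^\infty$, it gives a functor into $\Sp^{C_2}$. The functor $\iota$ is lax symmetric monoidal, since objects of $\caC\otimes\caD$ receive the pairs. Genuine excisive approximation is computed as a colimit of iterated real $S_\sigma$-constructions, that is, loops of realizations of the $S$-dot construction with its duality. The point is to show that the inclusion of genuine excisive functors into all left adjoint real functors $\caM\to\Sp^{C_2}$ is compatible with Day convolution, in the sense that the Day product of a local equivalence with any object is again a local equivalence. Granting this, the excisive approximation is a symmetric monoidal localization of $\Fun^L(\caM,\Sp^{C_2})$. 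It therefore carries commutative algebras, which are the lax symmetric monoidal functors, to commutative algebras. This makes $\KR'$ lax symmetric monoidal.

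Third, for initiality, observe that $\Sigma^\infty\iota$ is the unit for Day convolution on left adjoint real functors $\caM\to\Sp^{C_2}$. This is because $\iota$ is corepresented by the unit of $\caM$, and so $\Sigma^\infty\iota$ is the initial commutative algebra. Its localization $\KR'$ is then the unit, hence the initial commutative algebra, in the localized category of genuine excisive functors, since symmetric monoidal localizations preserve units. Therefore $\KR'$ is initial among left adjoint, genuine excisive, lax symmetric monoidal real functors.

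The main obstacle is the compatibility step in the second paragraph. It requires checking that Day-convolving with a fixed functor preserves the class of maps inverted by excisive approximation, including the $C_2$-genuine (parametrized, norm-type) part and not only the underlying part. I would first try to reduce this to generators: maps of the form $\Sigma^\infty\map(\caC,-)$ tensored with the pushout squares coming from the real $S_\sigma$-construction. I would then use that $\caC\otimes S_\sigma(-)\simeq S_\sigma(\caC\otimes -)$, a levelwise identification coming from the tensor structure on $\Wald_\infty^\gd$.
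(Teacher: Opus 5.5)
You have the right architecture: exhibit the relevant functor category as a symmetric monoidal localization and identify $\KR'$ with the image of the unit. But the step you single out as the main obstacle rests on an identification that is not available. The tensor product on $\Wald^\gd_\infty$ does not commute with the $S$-construction levelwise. What exists is only the comparison map $\chi\colon S_n(\caC)\otimes\caD\to S_n(\caC\otimes\caD)$ of \cref{rmk:Sconstruction_monoidal}. It is the restriction of the interchange map $\caC^{\Ar([n])}\otimes\caD\to(\caC\otimes\caD)^{\Ar([n])}$ between a cotensor and the left adjoint $-\otimes\caD$, and there is no reason for it to be an equivalence.

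The same issue undermines your first step. Since $S(\caD)_3\otimes\caC$ is not $S(\caD\otimes\caC)_3$, the map $\gamma_3^\caD\otimes\caC$ is not itself of the form $\gamma_3^{\mathcal E}$. So compatibility of the fissile localization with $\otimes$ does not reduce to $-\otimes\caC$ preserving split cofiber sequences. This is the one genuinely non-formal point, and the paper settles it by the retract argument of \cref{prop:semiadd_loc_is_monoidal}. There $\gamma_3\circ i=\id$, and $(i\circ\gamma_3)\otimes\caD$ is rewritten, using only naturality, $\theta$ and the map $\chi$, as $\ev_3\circ(i\circ\gamma_3)^{S(\caC)_3\otimes\caD}\circ\chi\circ(\theta\otimes\caD)$. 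The localization sends this to $(\ev_3\otimes\caD)\circ(\theta\otimes\caD)=\id$. You need this on the functor side as well: it is what gives $\Fun^L(\caM,\Sp^{C_2})$, i.e.\ the $\gamma_3$-inverting real theories, a Day convolution whose commutative algebras are the lax symmetric monoidal functors and whose unit is $\Sigma^\infty\iota$. \cref{Dwaldfiss_monoidal} is a statement about presheaves and does not literally provide this, although the same argument applies.

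By contrast, the excisive step is formal and needs no $S$-construction. For $\caC\in(\Wald^\gd_\infty)^{\aleph_1}$ and $\y$ the corepresentable functor as in \cref{prop:semiadd_loc_is_monoidal}, the generating local equivalences are $S^{1,1}\wedge\y(\Sigma^{1,1}\caC)\to\y(\caC)$. Their Day convolution with $\y(\caD)$ is $S^{1,1}\wedge\y(\Sigma^{1,1}\caC\otimes\caD)\to\y(\caC\otimes\caD)$. This is again a generator, because the closed real tensor product commutes with real tensors in each variable, so $\Sigma^{1,1}\caC\otimes\caD\simeq\Sigma^{1,1}(\caC\otimes\caD)$; this is exactly the argument of \cref{prop:localization_is_monoidal}. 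With these two corrections your localization-plus-unit strategy goes through, but it re-proves on $\caM$ what the paper proves once on $\mathrm{rThy}$.

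The paper's own proof is a transfer. $\KR'$ is lax symmetric monoidal because it factors as the inclusion $\D\Wald^{\gd,\fiss}_\infty\subset\D\Wald^\gd_\infty$ followed by the left adjoint extension of the lax symmetric monoidal functor $\KR$. The inclusion is lax symmetric monoidal as the right adjoint of the symmetric monoidal localization of \cref{Dwaldfiss_monoidal}. Initiality then follows from \cref{thm:univ_prop_Sp} via the equivalence $\Fun^{L,\exc}_{\Spc^{C_2}}(\D\Wald^{\gd,\fiss}_\infty,\Sp^{C_2})\simeq\Add_{\Sp^{C_2}}$.
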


\section{Monoidal structure on $\Wald_\infty^\gd$}
\subsection{Waldhausen $\infty$-categories with genuine duality}

\begin{definition}
A Waldhausen $\infty$-category is a pair $(\caD, \caC) $ with $\caD$ an  $\infty$-category with zero object and $\caC$ a subcategory of $\caD$, whose morphisms are called cofibrations, subject to the following conditions.

\begin{itemize}
\item Every object $X$ of $\caD$ is cofibrant, i.e. the unique map $0 \to X$ is a cofibration.
	
\item The cobase change in $\caD$ of any cofibration exists and is again a cofibration.
\end{itemize}

A map of Waldhausen $\infty$-categories $(\caD, \caC) \to (\caD', \caC') $
is a functor $\caD \to \caD'$ that preserves the initial object, cofibrations and pushouts along cofibrations.
\end{definition}

We denote by $\Wald_\infty \subset \Fun([1], \Cat_\infty)$ the subcategory spanned by the Waldhausen $\infty$-categories and maps between them as above. Note that the evaluation at the target defines a faithful functor $\Wald_\infty \to \Cat_\infty$, i.e.\ a functor that yields embeddings of spaces on mapping spaces.

The notion of Waldhausen $\infty$-category is not self dual, and therefore does not lend itself to easily take homotopy fixed points which is a central point in our work, as we want to consider dualities. However, we can combine two different Waldhausen structures and define the notion of \emph{exact} $\infty$-category, first introduced in \cite{barwick.exact}.

\begin{definition}A pair $(\caC,\caF)$ is a co-Waldhausen $\infty$-category if $(\caC^\op,\caF^\op)$ is a Waldhausen $\infty$-category. 
\end{definition}

 An exact $\infty$-category will consist of an $\infty$-category $\caD$ compatibly carrying a Waldhausen and co-Waldhausen structure, where the compatibility is encoded in the following notion introduced in \cite{barwick.exact}.

\begin{definition}Let $(\caD,\caC,\caF)$ be a triple where $\caD$ is an $\infty$-category and $\caC,\caF$ are subcategories, whose morphisms we call cofibrations and fibrations respectively. A commutative square $\sigma$ in $\caD$
\[
\begin{tikzcd}
A \ar[r, "f"] \ar[d, "\alpha"'] & B \ar[d, "\beta"] \\
C \ar[r, "g"']  & D
\end{tikzcd}
\]

\begin{itemize}
\item is an ambigressive pushout square if $\sigma$ is a pushout square, $f$ is a cofibration and $\alpha$ is a fibration, and 
\item an ambigressive pullback square if $\sigma$ is a pullback square, $g$ is a cofibration and $\beta$ is a fibration.
\end{itemize}

We call a square as above exact when is both an ambigressive pushout and an ambigressive pullback.
\end{definition}

\begin{definition}\label{def:exact_infty_cats}
An exact $\infty$-category is a triple $(\caD, \caC, \caF)$ with $\caD$ an additive $\infty$-category and $\caC, \caF$ subcategories of $\caD$, whose morphisms are called respectively cofibrations and fibrations, subject to the following conditions.

\begin{enumerate}
\item The pair $(\caD, \caC) $ is a Waldhausen $\infty$-category.
\item The pair $(\caD, \caF) $ is a co-Waldhausen $\infty$-category.
\item A commutative square in $\caD$ is an ambigressive pushout square if and only if it is an ambigressive pullback square.
\end{enumerate}
\end{definition}

The morphisms between them are the natural ones, as given in the following definition.

\begin{definition}
Given exact $\infty$-categories $(\caD, \caC, \caF), (\caD', \caC', \caF') $
a functor $F\colon \caD \to \caD'$ is called exact (with respect to the exact structures on $\caC$ and $\caD$) if $F$ preserves the zero object, cofibrations, fibrations, pushouts along cofibrations and pullbacks along fibrations.

\end{definition}

In light of condition 3, in an exact $\infty$-category we call ambigressive pushout squares respectively ambigressive pullback squares simply ambigressive squares.

\begin{remark}
A functor $F\colon \caD \to \caD'$ is exact if and only if $F$ preserves the zero object, cofibrations and cofibers of cofibrations or dually the zero object, fibrations and fibers of fibrations.
\end{remark}

Denote by $\Exact_\infty \subset \Fun(\Lambda^2_2, \Cat_\infty)$ the subcategory spanned by the exact $\infty$-categories and exact functors.
Note that valuation at $2 \in \Lambda^2_2$ defines a faithful functor $\Exact_\infty \to \Cat_\infty$.\ Restriction along the embedding $[1] \subset \Lambda^2_2$ that sends 0 to 0 and 1 to 2 defines a functor $$\Fun(\Lambda^2_2, \Cat_\infty) \to \Fun([1], \Cat_\infty)$$ that restricts to a fully faithful functor $$\Exact_\infty \to \Wald_\infty.$$

In the next result, we characterize the additivity condition on an $\infty$-category satisfying all conditions of being exact, except being additive.

\begin{proposition}
Let $\caD$ be an $\infty$-category that admits a zero object and $\caC,\caF$ subcategories of $\caD$, whose morphisms we call cofibrations and  fibrations respectively. If conditions (1)-(3) of \cref{def:exact_infty_cats} hold, then $\caD$ is additive if and only if the following condition holds:
a commutative square 
\begin{equation*}\label{jjj1}
\begin{tikzcd}
X \ar[r] \ar[d]     & Y \ar[d]  \\
Z \ar[r]            & W
\end{tikzcd}
\end{equation*}
in $\caD$, where both vertical maps are fibrations, is a pullback square if it induces an equivalence on fibers of the vertical morphisms.
\end{proposition}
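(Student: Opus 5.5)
The plan is to prove the two implications separately. In both I use the basic fact that in a triple satisfying conditions (1)--(3), finite coproducts and products exist and interact well with the two structures.

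\emph{Preliminaries.} Since every $0\to X$ is a cofibration and cobase changes of cofibrations exist, the coproduct $X\sqcup Y$ exists as the pushout of $X\leftarrow 0\to Y$. Dually, $X\times Y$ exists because every $X\to 0$ is a fibration. The inclusion $X\to X\sqcup Y$ is a cofibration, being the cobase change of $0\to Y$. By pasting pushouts, the square with top $X\to X\sqcup Y$, left $X\to 0$, bottom $0\to Y$ and right the canonical map $X\sqcup Y\to Y$ is a pushout. Its left edge is a fibration, so it is an ambigressive pushout square, hence by (3) an ambigressive pullback square. Consequently the map $X\sqcup Y\to Y$ is a fibration with fiber $X$. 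Symmetrically, $X\sqcup Y\to X$ is a fibration with fiber $Y$.

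\emph{Condition $\Rightarrow$ additive.} I compare the two vertical fibrations $X\sqcup Y\to Y$ and $\mathrm{pr}_Y\colon X\times Y\to Y$. The second is a fibration as the base change of $X\to 0$. The canonical map $X\sqcup Y\to X\times Y$ gives a square over $\id_Y$, and it induces the identity on the fibers, both of which are $X$. By the hypothesis this square is a pullback. Since its bottom edge is an equivalence, so is its top edge, so $\caD$ is semiadditive. For group-likeness, I apply the same argument to the shear map $X\times X\to X\times X$, $(a,b)\mapsto(a,a+b)$, viewed as a map over $\mathrm{pr}_1\colon X\times X\to X$. Both projections are fibrations, the bottom edge is $\id_X$, and on fibers the map is $\id_X$. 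The hypothesis then makes the shear map an equivalence, so $\caD$ is additive.

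\emph{Additive $\Rightarrow$ condition.} Given the square with vertical fibrations $X\to Z$ and $Y\to W$, I form the pullback $P=Z\times_W Y$. It exists because $Y\to W$ is a fibration, and $P\to Z$ is again a fibration. The induced map $X\to P$ is a map of fibrations over $Z$ that is an equivalence on fibers, and it remains to show it is an equivalence. This "five lemma" step is the main obstacle, since $\caD$ is only additive, not stable.

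My first attempt would be to use that an exact $\infty$-category (now that $\caD$ is additive, \cref{def:exact_infty_cats} holds) admits a fully faithful exact embedding into a stable $\infty$-category that preserves and detects ambigressive squares, for instance via additive presheaves or Klemenc's stable envelope. In the stable target the square is a pullback because the fibers agree. The embedding preserves the pullback $P$ along a fibration, so $X\to P$ becomes an equivalence after embedding, and hence is an equivalence by full faithfulness.

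If I want to avoid citing the embedding, the fallback is a direct argument. Using the group structure, I would form the difference with the induced map $P\to W$. This would exhibit $X$ and $P$ as fibers of the same fibration, and hence as equivalent via ambigressive pasting.
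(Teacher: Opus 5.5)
The paper states this proposition without proof, so there is no argument of the authors to compare against. Judged on its own, your proposal is correct, but in the direction ``additive $\Rightarrow$ condition'' all of the content is delegated to an external theorem.

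\textbf{The direction ``condition $\Rightarrow$ additive''.} Your preliminaries and this implication are complete. Comparing the fibrations $X\sqcup Y\to Y$ and $X\times Y\to Y$ over $\id_Y$ gives semiadditivity, and the shear map over $\mathrm{pr}_1$ gives group-likeness.

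\textbf{The direction ``additive $\Rightarrow$ condition''.} The reduction to a map $c\colon X\to P$ of fibrations over $Z$ that is an equivalence on fibers is right. The embedding argument is valid if you invoke Klemenc's theorem that the unit $\caD\to\mathcal{H}^{\mathrm{st}}(\caD)$ of the stable hull is fully faithful and preserves exact sequences. You only need preservation of fibers of fibrations: full faithfulness then reflects the pullback, so you do not even need to form $P$.

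This theorem, however, is exactly where the difficulty lies, and your other suggestions do not supply it.
\begin{itemize}
\item The plain additive Yoneda embedding $\caD\to\Fun^{\times}(\caD^\op,\Sp)$ does not preserve fibers of fibrations. The spectral fiber of $\map(T,X)\to\map(T,Z)$ has $\pi_{-1}$ equal to the cokernel of $\pi_0\map(T,X)\to\pi_0\map(T,Z)$. Representably, the whole point is that a map $T\to Z$ lifting to $P$ must already lift to $X$. That needs the exact structure, not just additivity.
\item Your fallback does not address this. Identifying $X$ with the fiber of the difference map $Z\oplus Y\to W$ is just a restatement of the claim.
\end{itemize}

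\textbf{A self-contained alternative.} The usual exact-category argument adapts.
\begin{enumerate}
\item Let $i\colon F\to X$ and $i'\colon F\to P$ be the fiber inclusions, with $ci\simeq i'$, and put $E=X\times_Z P$.
\item The projection $E\to X$ is a fibration with fiber $F$ and section $(\id,c)$. By additivity, checked on mapping spectra, it splits: $E\simeq X\oplus F$.
\item In these coordinates, the other projection $E\to P$ is a fibration with fiber $F$. It is given by $(c,i')$, and its fiber inclusion is $(i,-1)$. This step needs you to track the homotopies that define the induced map on fibers.
\item The shear automorphism $(x,f)\mapsto(x+if,f)$ of $X\oplus F$ turns these into $(c,0)$ and $(0,-1)$.
\item By (3), $P$ is the cofiber of $(0,-1)$. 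So is $X$, via $\mathrm{pr}_X$. Hence the comparison equivalence $u\colon X\to P$ satisfies $u\circ\mathrm{pr}_X\simeq(c,0)$, so $u\simeq c$ and $c$ is an equivalence.
\end{enumerate}
This uses only (1)--(3) and additivity, at the cost of some homotopy bookkeeping. Your route is shorter, but it rests on the $\infty$-categorical Gabriel--Quillen embedding, whose proof has to confront the same lifting problem.
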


\begin{remark}The embedding $\Exact_\infty \subset \Wald_\infty$ is a localization. Indeed, we know that $\Exact_\infty$ is presentable, and that the $\infty$-categories $\Wald_\infty \subset \Fun([1], \Cat_\infty), \ \Exact_\infty \subset \Fun(\Lambda^2_2, \Cat_\infty)$ are closed under small limits and small filtered colimits, and therefore $\Exact_\infty$ is closed  under small limits and small filtered colimits in $\Wald_\infty$. 
\end{remark}

In order to define a duality on Waldhausen $\infty$-categories, we note that the non-trivial $C_2$-actions on $\Lambda^2_2$ and $\Cat_\infty$ induce a 
$C_2$-action on $\Fun(\Lambda^2_2, \Cat_\infty)$ that restricts to $\Exact_\infty$. This allows us to introduce the following definition.

\begin{definition}
We define the $\infty$-category of small Waldhausen $\infty$-categories with duality as the homotopy fixed points of the action on $\Exact_\infty$ described above. We write $$\Wald_\infty^\d\coloneqq \Exact_\infty^{hC_2}.$$
\end{definition}

In other words, a Waldhausen $\infty$-category with duality is a Waldhausen $\infty$-category, whose underlying $\infty$-category carries a duality such that the cofibrations together with the opposites of the cofibrations
form an exact $\infty$-category. 

The next step is to refine this notion of duality, for which we define Waldhausen $\infty$-categories with \emph{genuine duality}. A key component of the definition, that we won't introduce here, is the hermitian objects functor $\mathcal{H}\colon \Cat_\infty^\mathrm{d}\to \Cat_\infty$ which plays the role of homotopy fixed points, defined in \cite[Section 2.3]{hsv}.

\begin{definition}\label{Waldgd}
A small Waldhausen $\infty$-category with genuine duality is 
a pair $(\caE, \phi)$, with $E$ a small Waldhausen $\infty$-category with duality
and $\phi\colon  H \to \caH(\caE)$ a right fibration enjoying the following conditions.

\begin{enumerate}
\item $(\caE, \phi)$ is an additive $\infty$-category with genuine duality.
\item for every commutative square
\begin{equation*}
\begin{tikzcd}
A \ar[r, "f"] \ar[d, "\alpha"']         &B \ar[d, "\beta"] \\
	C \ar[r, "g"']                      &D
\end{tikzcd}
\end{equation*}
in $\caH(\caE)$ lying over a pushout square in $\caE$ such that $f$ lies over a cofibration, the induced square of spaces
\[
\begin{tikzcd}
H(D) \ar[r] \ar[d]          &H(B) \ar[d] \\
H(C) \ar[r]                 &H(A)
\end{tikzcd}
\]
is a pullback square.
\end{enumerate}
\end{definition}

\begin{definition}
We define the $\infty$-category of small Waldhausen $\infty$-categories with genuine duality $\Wald_\infty^\gd$ as the full subcategory of $\Wald_\infty^\d \times_{\Cat_\infty^{hC_2}} \Cat_\infty^\gd$ spanned by the small Waldhausen $\infty$-categories with genuine duality. 
\end{definition}

\begin{remark}
By the pullback definition of $\Cat_\infty^\gd$ and the pullback pasting law, there is a canonical equivalence 
$$ \Wald_\infty^\gd \simeq \Wald_\infty^\d \times_{\Cat_\infty} \caR,$$
where the pullback is taken along the functor $\caR \to \Cat_\infty $ evaluating at the target and the functor $\caH\colon \Wald_\infty^\d \to \Cat_\infty $
taking hermitian objects.
\end{remark}

\begin{remark}
The $\infty$-category $\Wald_\infty^\gd$ is $\aleph_1$-compact since
the $\infty$-categories $\Wald_\infty^\d , \Cat_\infty, \caR $
are compactly generated and the functor $\caR \to \Cat_\infty $ evaluating at the target preserves compact objects and the functor $\caH\colon \Wald_\infty^\d \to \Cat_\infty $ preserves $\aleph_1$-compact objects.
\end{remark}

\subsection{Brief comments on real structures} Before we move on to the monoidal structure, we recall and fix terminology that will be used throughout the work originally developed in \cite[Section 3]{hsv}. 

\begin{definition} An $\infty$-category $\caC$ is a \emph{real} $\infty$-category if it is enriched in the $\infty$-category of genuine $C_2$-spaces, $\Spc^{C_2}$. A \emph{real} functor between real $\infty$-categories is a $\Spc^{C_2}$-enriched functor.
\end{definition}

\begin{definition}
A real symmetric monoidal $\infty$-category is a commutative monoid in $\Spc^{C_2}\mathrm{-}\Cat$, the $\infty$-category of real $\infty$-categories.
\end{definition}

\begin{definition}
A presentably real symmetric monoidal $\infty$-category is a presentable real $\infty$-category whose tensor product is componentwise a real left adjoint.
\end{definition}

\begin{remark}
A presentably real symmetric monoidal $\infty$-category is likewise a commutative algebra in $\Mod_{\Spc^{C_2}}(\Pr^\L)$, where $\Pr^L$
is the symmetric monoidal $\infty$-category of presentable $\infty$-categories and left adjoint functors and $\Mod_{\Spc^{C_2}}(\Pr^\L)$ is the $\infty$-category of $ \Spc^{C_2}$-modules in $\Pr^\L$.
This description implies that a presentably real symmetric monoidal $\infty$-category is likewise a presentably symmetric monoidal $\infty$-category $\caC$ equipped with a left adjoint symmetric monoidal functor $\Spc^{C_2} \to \caC.$
\end{remark}

\subsection{Monoidal structure on $\Wald_\infty^\gd$}
In \cite{hsv} we have built a monoidal structure on $\Wald_\infty^\gd$, which we used to show that it is a real $\infty$-category \cite[Corollary 7.18]{hsv}. We recall the construction here.

\begin{proposition}[{\cite[Proposition 7.10.]{hsv}\label{dfghjkgfl}}]
The $\infty$-category $\Exact_\infty$ carries a closed symmetric monoidal structure with $C_2$-action and the free functor $\Cat_\infty \to \Exact_\infty$ refines to a $C_2$-equivariant symmetric monoidal functor.
\end{proposition}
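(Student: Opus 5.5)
The plan is to build the symmetric monoidal structure on $\Exact_\infty$ as a localization of a presheaf category carrying Day convolution, and to obtain the $C_2$-equivariance by applying the same construction to its opposite.

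First, I would present $\Exact_\infty$ as an accessible localization of a presheaf-type $\infty$-category. Since $\Exact_\infty$ is presentable (remark above) and the free functor $\Cat_\infty \to \Exact_\infty$ is left adjoint to the faithful forgetful functor, $\Exact_\infty$ is generated under colimits by free exact $\infty$-categories on small $\infty$-categories. I would take the symmetric monoidal structure on $\Cat_\infty$ given by the cartesian product and define a candidate tensor product on $\Exact_\infty$ via ``bi-exact'' functors. Concretely, for exact $\caA,\caB,\caE$, let $\Fun^{\mathrm{biex}}(\caA\times\caB,\caE)$ be the full subcategory of $\Fun(\caA\times\caB,\caE)$ on functors that are exact in each variable separately. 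I would then show that $\Fun^{\mathrm{ex}}(\caB,\caE)$ carries a natural exact structure, with cofibrations the natural transformations that are objectwise cofibrations and whose relevant latching squares are cofibrations, and fibrations defined dually. This gives the internal hom, and the tensor product $\caA\otimes\caB$ is the object corepresenting bi-exact functors. Existence follows from the adjoint functor theorem, since $\Fun^{\mathrm{ex}}(\caB,-)$ preserves limits and filtered colimits by the closure properties recorded in the remark on localization. The coherent symmetric monoidal structure then comes from Lurie's framework: the multi-exact functors form an $\infty$-operad over $\Exact_\infty$, and corepresentability gives a closed symmetric monoidal structure.

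Second, for the $C_2$-action, I would observe that the duality $(\caD,\caC,\caF)\mapsto(\caD^\op,\caF^\op,\caC^\op)$ sends multi-exact functors to multi-exact functors, because $\op$ commutes with products and the definitions are self-dual once cofibrations and fibrations are swapped (condition (3) is symmetric). The $C_2$-action on $\Cat_\infty$ by $\op$ is symmetric monoidal for the product. Hence the operad of multi-exact functors acquires a $C_2$-action lifting the one on $\Exact_\infty$, and the corepresenting tensor product is automatically equivariant. To phrase this cleanly, I would work with the $\infty$-operad as a functor $BC_2 \to \mathrm{Op}_\infty$, landing in symmetric monoidal $\infty$-categories.

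Third, for the free functor, the free exact $\infty$-category on $K$ corepresents $\Fun(K,-)$. A multi-exact functor out of $\mathrm{Free}(K)\times\mathrm{Free}(L)$ is the same as a functor $K\times L\to\caE$, which gives $\mathrm{Free}(K\times L)\simeq \mathrm{Free}(K)\otimes\mathrm{Free}(L)$. The free functor is then symmetric monoidal, with unit $\mathrm{Free}(*)$. Equivariance holds because $\mathrm{Free}(K^\op)\simeq \mathrm{Free}(K)^\op$, by uniqueness of left adjoints commuting with the involutions.

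The main obstacle is the first step: putting an exact structure on $\Fun^{\mathrm{ex}}(\caB,\caE)$ and verifying condition (3) together with additivity. The ambigressive pushout/pullback equivalence has to be checked pointwise while also controlling the latching conditions. If this proves difficult, I would instead use the embedding $\Exact_\infty\subset\Wald_\infty$ as a localization and transfer Barwick's tensor product of Waldhausen $\infty$-categories. The key point to check would then be that the localization is compatible with the tensor product, meaning local equivalences are stable under tensoring, which can be verified on the free generators.
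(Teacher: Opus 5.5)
The paper gives no argument for this statement. It is quoted from \cite[Proposition 7.10]{hsv}, so your proposal has to stand on its own. Your main line is a sound strategy of the same kind as Lurie's tensor product on $\Cat_\infty(\mathcal{K})$ or Barwick's on $\Wald_\infty$: an exact structure on $\Fun^{\mathrm{ex}}(\caB,\caE)$, the tensor product as the object corepresenting biexact functors, the $C_2$-action from $\op$ acting on the $\infty$-operad of multi-exact functors, and $\mathrm{Free}(K)\otimes\mathrm{Free}(L)\simeq\mathrm{Free}(K\times L)$. Steps 2 and 3 are fine in outline. There are, however, two concrete problems.

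\textbf{The generation claim is false.} $\Exact_\infty$ is not generated under colimits by the $\mathrm{Free}(K)$. The forgetful functor is faithful but not conservative: the identity of an additive $\infty$-category, from its split exact structure to a non-split one, is exact and an equivalence of underlying $\infty$-categories. In fact $\mathrm{Free}(K)$ is the free additive $\infty$-category on $K$ with the split exact structure, whose cofibrations are the summand inclusions. Equipping an additive $\infty$-category with its split structure is a fully faithful left adjoint to the forgetful functor from $\Exact_\infty$ to additive $\infty$-categories. Hence split exact $\infty$-categories are closed under colimits, and the free objects generate nothing else; for instance they do not generate $\Sp^\omega$ with all maps as cofibrations. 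Your main line does not use this claim, but your announced Day-convolution presentation and your fallback do. Compatibility of $\Wald_\infty\to\Exact_\infty$ with tensor products cannot be checked on these generators alone. Moreover, Barwick's tensor product on $\Wald_\infty$ carries no $C_2$-action, so the fallback would not give equivariance.

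\textbf{Your definitions of biexactness and of cofibrations in the internal hom do not match.} You take ``biexact'' to mean exact in each variable separately, but you require a latching condition for cofibrations in $\Fun^{\mathrm{ex}}(\caB,\caE)$. The curried functor of a biexact $F$ sends a cofibration $a\to a'$ to such a cofibration only if $F(a',b)\sqcup_{F(a,b)}F(a,b')\to F(a',b')$ is a cofibration for all cofibrations $b\to b'$. That is not in your definition, and adding it would force you to add the dual matching condition to keep $\op$-invariance.

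The clean fix, which also resolves your stated main obstacle, is as follows. Call $F\to G$ a cofibration if it is objectwise a cofibration and the objectwise cofiber $G/F$ is exact; define fibrations dually. The latching condition is then automatic. For a cofibration $b\to b'$, both $L=G(b)\sqcup_{F(b)}F(b')$ and $P=G(b')\times_{(G/F)(b')}(G/F)(b)$ are extensions of $(G/F)(b)$ by $F(b')$. The additivity criterion for squares of fibrations therefore gives $L\simeq P$. Moreover, $P\to G(b')$ is the inclusion of the fiber of the composite fibration $G(b')\to(G/F)(b')\to(G/F)(b')/(G/F)(b)$, hence a cofibration by condition (3).

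Consequently the pushout-product condition holds for every functor exact in each variable. Pushouts along cofibrations in $\Fun^{\mathrm{ex}}(\caB,\caE)$ are computed objectwise, and condition (3) and additivity reduce to objectwise statements.

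Two smaller corrections:
\begin{enumerate}
\item $\Fun^{\mathrm{ex}}(\caB,-)$ preserves only $\kappa$-filtered colimits for $\kappa$ with $\caB$ $\kappa$-compact. This is still enough for the adjoint functor theorem.
\item To get a symmetric monoidal $\infty$-operad rather than just binary corepresentability, use the internal hom in all arities: multi-exact functors $\caA_1\times\dots\times\caA_n\times\caB\to\caE$ correspond to multi-exact functors $\caA_1\times\dots\times\caA_n\to\Fun^{\mathrm{ex}}(\caB,\caE)$.
\end{enumerate}
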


\begin{corollary}[{\cite[Corollary 7.11.]{hsv}}]\label{dfgkgfl}
The $\infty$-category $\Wald_\infty^\d$ carries a closed symmetric monoidal structure and the free functor $\Cat_\infty^{hC_2} \to \Wald_\infty^\d$ refines to a symmetric monoidal functor.
\end{corollary}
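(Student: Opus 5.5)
The plan is to deduce the corollary from \cref{dfghjkgfl} by passing to homotopy fixed points. This uses the general principle that taking homotopy fixed points of a symmetric monoidal $\infty$-category with $C_2$-action again yields a symmetric monoidal $\infty$-category. Concretely, \cref{dfghjkgfl} gives a $C_2$-action on $\Exact_\infty$ through symmetric monoidal functors, that is, a functor $BC_2 \to \Calg(\Cat_\infty)$. Since the forgetful functor $\Calg(\Cat_\infty)\to \Cat_\infty$ preserves limits, the limit over $BC_2$ computed in $\Calg(\Cat_\infty)$ has underlying $\infty$-category $\Exact_\infty^{hC_2}=\Wald_\infty^\d$. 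This equips $\Wald_\infty^\d$ with a symmetric monoidal structure in which the tensor product of two Waldhausen $\infty$-categories with duality is their tensor product in $\Exact_\infty$, with the diagonal duality.

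The free functor $\Cat_\infty \to \Exact_\infty$ is $C_2$-equivariant and symmetric monoidal by \cref{dfghjkgfl}. Applying $(-)^{hC_2}$ to it as a morphism in $\Fun(BC_2,\Calg(\Cat_\infty))$ gives a symmetric monoidal functor $\Cat_\infty^{hC_2}\to \Wald_\infty^\d$, where $\Cat_\infty^{hC_2}$ carries the cartesian structure. It remains to check that this functor is the free functor, i.e.\ left adjoint to the forgetful functor $\Wald_\infty^\d\to\Cat_\infty^{hC_2}$. The unit and counit of the equivariant adjunction are $C_2$-equivariant, because the forgetful functor $\Exact_\infty\to\Cat_\infty$ is equivariant and adjunctions in $\Fun(BC_2,\Cat_\infty)$ whose underlying functors form an adjunction pass to fixed points. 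So the adjunction descends.

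The main obstacle is closedness. Internal homs are not automatically inherited by homotopy fixed points, so I would argue directly. Since $\Exact_\infty$ is closed, each functor $X\otimes -$ is a left adjoint. For $X$ in $\Wald_\infty^\d$, the endofunctor $X\otimes -$ of $\Wald_\infty^\d$ is $(X\otimes-)^{hC_2}$ of an equivariant left adjoint. The right adjoint $\underline{\Fun}^{\mathrm{ex}}(X,-)$ inherits an equivariant structure by conjugation with the dualities, the duality on exact functors sending $F$ to $D\circ F\circ D$. By the same descent principle for adjunctions, $X\otimes -$ on fixed points therefore has a right adjoint. Alternatively, $\Wald_\infty^\d$ is presentable (it is a limit of presentable categories along left and right adjoints, since the action is by equivalences), and $X\otimes -$ preserves colimits because colimits in $\Exact_\infty^{hC_2}$ are computed underlying and $X\otimes-$ preserves colimits in each variable. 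The adjoint functor theorem then supplies the internal hom. I would present the second argument, since it avoids making the equivariant structure on internal homs explicit.
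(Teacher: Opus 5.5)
Your proposal is correct and is the intended deduction from \cref{dfghjkgfl} (the paper itself only cites \cite[Corollary 7.11]{hsv}): take $C_2$-homotopy fixed points of the $C_2$-equivariant symmetric monoidal structure on $\Exact_\infty$ and of the equivariant free functor, which gives the tensor product on $\Wald_\infty^\d=\Exact_\infty^{hC_2}$ described in \cref{description_tensor_Waldd}. Both of your closedness arguments work, and your claim that internal homs are not automatically inherited is overly cautious: since $C_2$ acts by equivalences, the Beck--Chevalley condition for the right adjoints holds trivially, which is exactly your first argument.
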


\begin{remark}\label{description_tensor_Waldd}
These monoidal structures are closely related. A description of the one in $\Exact_\infty$ can be found in \cite[Proposition 7.10]{hsv}. Moreover, its tensor unit is the free exact $\infty$-category on one generator, which is the full subcategory of the $\infty$-category of spectra $\Sp$ spanned by the finite sums of sphere spectra. 

Now, for $\Wald_\infty^\d$, given $\caC$ and $\caD$ Waldhausen $\infty$-categories with duality, its tensor product is given by the tensor product $\caC \otimes \caD$ in $\Exact_\infty$
equipped with the induced duality
$$(\caC \otimes \caD)^\op \simeq \caC^\op \otimes \caD^\op \simeq \caC \otimes \caD. $$

Moreover, the tensor unit of $\Wald^\d_\infty$ is the the free exact $\infty$-category on one generator equipped with the restriction of the Spanier-Whitehead duality.
\end{remark}

\begin{remark}\label{rmk:Waldgd_subcat_pullback} We can think of $\Wald^\gd_\infty$ as the full subcategory  spanned the Waldhausen $\infty$-categories with genuine duality of the pullback below 

\[
\begin{tikzcd}
\caP \ar[r] \ar[d]       & \Wald_\infty^{d} \ar[d, "\caH"]\\
\caR \ar[r, "\mathrm{ev}_1"']     & \Cat_\infty,
\end{tikzcd}
\]
where $\caR \subset \Fun([1], \Cat_\infty)$ is the full subcategory of right fibrations in the sense of \cite[Definition 2.24]{hsv}, $\ev_1$ is the cocartesian fibration that evaluates at the target, and $\caH$ is the functor that takes hermitian objects defined in \cite[Definition 2.29]{hsv}.
\end{remark}

From the interpretation in \cref{rmk:Waldgd_subcat_pullback}, it follows  that there is a canonical cocartesian fibration $\Wald^\gd_\infty\to \Wald^\d_\infty$. As we showed in \cite[Section 7]{hsv}, the closed symmetric monoidal structure on $\Wald_\infty^\d$ lifts to a closed symmetric monoidal structure on $\Wald_\infty^\gd$ along this functor.

\begin{proposition}[{\cite[Proposition 7.17.]{hsv}}]\label{fhbhnj}
The $\infty$-category $\Wald_\infty^\gd $ carries a closed symmetric monoidal structure and the forgetful functor $\Wald_\infty^\gd \to \Wald_\infty^\d$ is a cocartesian fibration of symmetric monoidal $\infty$-categories.
Moreover the free functor $ \Cat_\infty^\gd \to \Wald_\infty^\gd $ is a symmetric monoidal functor.
\end{proposition}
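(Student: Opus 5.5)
The plan is to build the symmetric monoidal structure on $\Wald_\infty^\gd$ by transporting structure along the pullback description of \cref{rmk:Waldgd_subcat_pullback}. There $\Wald_\infty^\gd$ sits as a full subcategory of $\caP = \caR\times_{\Cat_\infty}\Wald_\infty^\d$. By \cref{dfgkgfl}, $\Wald_\infty^\d$ is closed symmetric monoidal. The $\infty$-category $\caR$ of right fibrations carries the pointwise (cartesian) product, and $\ev_1\colon\caR\to\Cat_\infty$ is then a symmetric monoidal cocartesian fibration, with cocartesian transport given by pullback of right fibrations.

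\textbf{Step 1: make $\caH$ lax symmetric monoidal.} The key input is that $\caH\colon\Wald_\infty^\d\to\Cat_\infty$ is lax symmetric monoidal for the cartesian structure on the target. Concretely, a pair of hermitian objects $(X,q)\in\caH(\caC)$ and $(Y,q')\in\caH(\caD)$ should give a hermitian object $X\otimes Y$ in $\caC\otimes\caD$. I would obtain this by writing $\caH$ as the composite of the forgetful functor to $\Cat_\infty^{hC_2}$ with the hermitian-objects functor $\Cat_\infty^{hC_2}\to\Cat_\infty$ from \cite[Section 2.3]{hsv}, which is a kind of homotopy-fixed-points of twisted arrows. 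One then checks that each piece is lax monoidal, the forgetful piece because the free functor of \cref{dfgkgfl} is symmetric monoidal.

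\textbf{Step 2: monoidal structure on the pullback $\caP$.} With $\caH$ lax monoidal, I form a symmetric monoidal structure on $\caP$ by pulling back the symmetric monoidal cocartesian fibration $\caR^\otimes\to\Cat_\infty^{\otimes}$ along $\caH^\otimes$. The tensor product of $(\caC,H\to\caH\caC)$ and $(\caD,H'\to\caH\caD)$ is then $\caC\otimes\caD$ equipped with the left Kan extension, i.e. the cocartesian pushforward, of $H\times H'\to\caH\caC\times\caH\caD\to\caH(\caC\otimes\caD)$. Since $\caR\to\Cat_\infty$ is a cocartesian fibration compatible with the tensor product, the projection $\caP\to\Wald_\infty^\d$ is a symmetric monoidal cocartesian fibration.

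\textbf{Step 3: restrict to the full subcategory and check closedness.} This step is the main obstacle. One must show that $\Wald_\infty^\gd$ is closed under the tensor product and contains the unit, or more likely that it is a symmetric monoidal localization of $\caP$. That means exhibiting a left adjoint $L\colon\caP\to\Wald_\infty^\gd$, an additive/excisive reflection of the right fibration, whose equivalences are stable under tensoring. I would try to prove this by showing that tensoring with a fixed object preserves $L$-equivalences: the gluing condition (2) of \cref{Waldgd} is tested against pushout squares along cofibrations in $\caC\otimes\caD$, and these are generated by tensors of such squares, so the condition reduces to the factorwise one. Closedness then follows from presentability together with the fact that the tensor product preserves colimits separately in each variable. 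That last fact reduces to the same property in $\Wald^\d_\infty$ and in $\caR$, where pushforward commutes with colimits.

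\textbf{Step 4: the free functor.} Finally, the free functor $\Cat_\infty^\gd\to\Wald_\infty^\gd$ is the pullback of the free functor of \cref{dfgkgfl} along $\caR$, composed with $L$. It is symmetric monoidal because both factors are and $L$ is a monoidal localization.
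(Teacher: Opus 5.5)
Your route is the one the paper has in mind. You cut $\Wald_\infty^\gd$ out of the pullback $\caP$ of \cref{rmk:Waldgd_subcat_pullback}, and make $\caH$ lax monoidal via $\caH(\caC)\times\caH(\caD)\simeq\caH(\caC\times\caD)\to\caH(\caC\otimes\caD)$. You then push the external product forward along this map and take the free Waldhausen structure with genuine duality. This is exactly the description in \cref{description_tensor_Waldgd}. One slip: in $\caR$ the \emph{cocartesian} transport is left Kan extension (the free right fibration), as you correctly use in Step~2; pullback is the cartesian transport.

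The real problem is the justification at the key point of Step~3, which runs in the wrong direction. You argue that the test squares in $\caC\otimes\caD$ are generated by tensors of test squares. That is neither needed nor easy to make precise. What is needed is the converse: tensoring a test square with a fixed object gives a test square.

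Concretely, let $\mu\colon\caH(\caC)\times\caH(\caD)\to\caH(\caC\otimes\caD)$ be the lax structure map. The fiberwise localization on presheaves on $\caH(\caC)$ is generated by the maps $s_\sigma\colon\y(B)\sqcup_{\y(A)}\y(C)\to\y(D)$, one for each square $\sigma$ as in \cref{Waldgd}, plus generators encoding condition (1).

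\begin{itemize}
\item It suffices to show that $\mu_!(s\boxtimes\psi)$ is a local equivalence for every generator $s$ and every $\psi$.
\item Since $\mu_!(s\boxtimes -)$ preserves colimits and local equivalences are closed under colimits, you may take $\psi=\y(t)$ representable. Then $\mu_!(s_\sigma\boxtimes\y(t))\simeq s_{\mu(\sigma,t)}$.
\item This is again a generator because $-\otimes t\colon\caC\to\caC\otimes\caD$ is exact, as the canonical functor $\caC\times\caD\to\caC\otimes\caD$ is exact in each variable. So $\mu(\sigma,t)$ lies over a pushout along a cofibration.
\item The generators for (1) are handled by additivity of $-\otimes t$.
\end{itemize}

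The same exactness shows that $\caH(G)^*$ preserves local presheaves for every map $G$ in $\Wald_\infty^\d$. Your sketch omits this, but you need it. It is what makes $L$ a localization relative to $\Wald_\infty^\d$, so that cocartesian lifts in $\Wald_\infty^\gd$ are $L\circ F_!$ and $\Wald_\infty^\gd\to\Wald_\infty^\d$ remains a cocartesian fibration, as the statement asserts.

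Finally, in Step~4 the functor $\Cat_\infty^\gd\to\caP$ is not literally a pullback of the free functor. It sends $(K,\phi)$ to the free object on $K$, equipped with the pushforward of $\phi$ along the map on hermitian objects induced by the unit $K\to\mathrm{Fr}(K)$. Its monoidality uses that this unit is a monoidal transformation.
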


\begin{remark}\label{description_tensor_Waldgd} Explicitly, given two Waldhausen $\infty$-categories with genuine duality 
$(\caC, \phi\colon H \to \caH(\caC))$ and $ (\caD, \psi\colon T \to \caH(\caD))$, its tensor product 
 $$(\caC, \phi\colon H \to \caH(\caC)) \otimes (\caD, \psi\colon T \to \caH(\caD)) $$
 is given by the tensor product $ \caC \otimes \caD$ in $\Wald_\infty^d$ and the free Waldhausen structure with genuine duality on $\caC \otimes \caD$ generated by the free right fibration on the functor 
$$H \times T \to  \caH(\caC) \times  \caH(\caD) \simeq \caH(\caC \times \caD) \to \caH(\caC \otimes \caD).$$

The tensor unit of $\Wald_\infty^\gd $ is the tensor unit of $\Wald_\infty^\d$ equipped with the standard genuine refinement.
\end{remark}

\section{Monoidal structure on $\Sp^{C_2}$}

We begin by specializing the theory of enriched spectra explained in \cite[Section 10.2]{hsv} to our case of interest: genuine $C_2$-spectrum. Note, however, that the proofs of this section hold in full generality for enriched spectra as defined therein.

\begin{notation}
    We write $S^{2,1}$ for the $2$-sphere with sign representation.
\end{notation}

\begin{notation}
Denote by $(\Spc_*^{C_2})_{S^{2,1}} \subset \Spc_*^{C_2}$ the smallest symmetric monoidal full subcategory containing $S^{2,1}$.
\end{notation}

\begin{definition}\label{def:V_enriched_K_spectra}
The reduced real $\infty$-category of genuine $C_2$-spectra is the full real subcategory 
$$\Sp^{C_2} \subset \Fun_{\Spc_*^{C_2}}((\Spc_*^{C_2})_{S^{2,1}}, \Spc_*^{C_2})$$
spanned by the reduced real functors $F: (\Spc_*^{C_2})_{S^{2,1}} \to \Spc_*^{C_2}$ which are $S^{2,1}$-excisive, i.e.\ for every $X \in (\Spc_*^{C_2})_{S^{2,1}}$
the canonical morphism $F(X) \to F(S^{2,1}\wedge X)^{S^{2,1}}$ is an equivalence.
\end{definition}

\begin{remark} For the interested reader, or that who wishes the write the following proofs in full generality, we redirect them to see genuine $C_2$-spectra as a special case of enriched spectra as in \cite[Definition 10.26 and Notation 10.27]{hsv} where it is 

$$\Sp^{C_2}\coloneqq \Sp^{\Spc_\ast^{C_2}}_{S^{2,1}}(\Spc^{C_2}_\ast).$$

That is, we consider $\caV=\Spc^{C_2}_\ast$ and $K=S^{2,1}$.
\end{remark}

\subsection{Monoidal structure on genuine $C_2$-spectra}

\begin{proposition}[{\cite[Remark 10.11]{hsv}}]
An object of $\Fun_{\Spc_*^{C_2}}((\Spc_*^{C_2})_{S^{2,1}}, \Spc_*^{C_2})$ belongs to $\Sp^{C_2}$ if and only if it is local with respect to the set of morphisms
$$ \{ \mathrm{lan}_{S^{2,1} \wedge Y}(S^{2,1} \wedge Z) \to \mathrm{lan}_{Y}(Z) \mid Y \in (\Spc_*^{C_2})_{S^{2,1}}, \ Z \in \Spc_*^{C_2} \} .$$

Hence the real embedding $$\Sp^{C_2} \subset \Fun_{\Spc_*^{C_2}}((\Spc_*^{C_2})_{S^{2,1}}, \Spc_*^{C_2})$$ admits a real left adjoint.
\end{proposition}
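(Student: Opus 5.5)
The plan is to reduce the $S^{2,1}$-excisiveness condition to a mapping-space condition via the enriched Yoneda lemma, and then to deduce the left adjoint from the presentable theory of localizations.

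For $Y \in (\Spc_*^{C_2})_{S^{2,1}}$ and $Z \in \Spc_*^{C_2}$, write $\mathrm{lan}_Y(Z)$ for the left Kan extension of $Z$ along the real functor picking out $Y$. Concretely, this is $Z \wedge \map(Y,-)$, the tensor of $Z$ with the representable real functor corepresented by $Y$.

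\textbf{Step 1: compute mapping spaces out of the generators.} By the enriched Yoneda lemma there is, for every reduced real functor $F$, a natural equivalence of genuine $C_2$-spaces
$$\map(\mathrm{lan}_Y(Z), F) \simeq \map_{\Spc_*^{C_2}}(Z, F(Y)).$$
The morphism $\mathrm{lan}_{S^{2,1}\wedge Y}(S^{2,1}\wedge Z) \to \mathrm{lan}_Y(Z)$ is the one corresponding under Yoneda to the unit map $Z \to \map(S^{2,1}, S^{2,1}\wedge Z)$, composed with the functoriality $\map(Y,-)\to \map(S^{2,1}\wedge Y, S^{2,1}\wedge -)$.

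\textbf{Step 2: identify the induced map.} Mapping this morphism into $F$ produces a map
$$\map(Z, F(Y)) \to \map(S^{2,1}\wedge Z, F(S^{2,1}\wedge Y)) \simeq \map(Z, F(S^{2,1}\wedge Y)^{S^{2,1}}).$$
One checks, by tracing the Yoneda identifications, that this is postcomposition with the canonical map $F(Y)\to F(S^{2,1}\wedge Y)^{S^{2,1}}$. Here the structure map of the real functor $F$ is used, which is where enrichment over $\Spc_*^{C_2}$ enters.

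\textbf{Step 3: conclude the characterization.} Locality means this map is an equivalence for all $Z$. Since $Z$ ranges over all of $\Spc_*^{C_2}$, the enriched (equivalently, ordinary) Yoneda lemma in $\Spc_*^{C_2}$ shows this is equivalent to $F(Y)\to F(S^{2,1}\wedge Y)^{S^{2,1}}$ being an equivalence, which is exactly $S^{2,1}$-excisiveness. Locality can be taken in the enriched or unenriched sense; the two agree because the genuine mapping object detects equivalences on all fixed points, and it suffices to test $Z$ including the orbits $C_2/H_+$.

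\textbf{Step 4: existence of the left adjoint.} The category $(\Spc_*^{C_2})_{S^{2,1}}$ is essentially small, so $\Fun_{\Spc_*^{C_2}}((\Spc_*^{C_2})_{S^{2,1}}, \Spc_*^{C_2})$ is presentable. The morphisms above are indexed by a proper class, since $Z$ varies over all of $\Spc_*^{C_2}$. To handle this, restrict $Z$ to a small set of generators, namely the orbits $C_2/e_+$ and $C_2/C_2{}_+$; locality for these suffices by the argument in Step 3. Then $\Sp^{C_2}$ is an accessible localization with respect to a small set, so the inclusion has a left adjoint by \cite{HTT} style arguments. Because the set is closed under tensoring with $\Spc_*^{C_2}$ (via $Z$), the local objects form a real subcategory closed under cotensors, so the localization is an enriched (real) left adjoint.

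The main obstacle is Step 2: verifying that the Yoneda-transported map is really the excisive structure map, which requires care with the enriched structure maps. Everything else is formal.
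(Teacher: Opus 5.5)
Your proposal is correct and follows the paper's proof. The paper likewise uses the enriched Yoneda lemma to identify the locality map with postcomposition $\map(Z,X(Y))\to\map(Z,X(S^{2,1}\wedge Y)^{S^{2,1}})$ by the excisive structure map, and then concludes by Yoneda in $\Spc_*^{C_2}$.

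Your Step 4 supplies details the paper leaves implicit in its ``Hence'': you restrict $Z$ to the orbits to get a small generating set, and you use closure of the local objects under cotensors to make the left adjoint real. One small slip is in Step 1. The morphism should be the one classified by $S^{2,1}\wedge Z\to Z\wedge\map(Y,S^{2,1}\wedge Y)$, induced by $S^{2,1}\to\map(Y,S^{2,1}\wedge Y)$. The composite with $\map(Y,-)\to\map(S^{2,1}\wedge Y,S^{2,1}\wedge -)$ that you describe does not give this map. Your Step 2 is nonetheless correct for the right map.
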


\begin{proof}
An object $X \in \Fun_{\Spc_*^{C_2}}((\Spc_*^{C_2})_{S^{2,1}}, \Spc_*^{C_2})$
is local with respect to the set $$ \{ \mathrm{lan}_{S^{2,1} \wedge Y}(S^{2,1} \wedge Z) \to \mathrm{lan}_{Y}(Z) \mid Y \in (\Spc_*^{C_2})_{S^{2,1}}, \ Z \in \Spc_*^{C_2} \} $$
if and only if the following map
$$ \map_{ \Fun_{\Spc_*^{C_2}}((\Spc_*^{C_2})_{S^{2,1}}, \Spc_*^{C_2})}(\mathrm{lan}_{Y}(Z),X) \to $$$$ \map_{ \Fun_{\Spc_*^{C_2}}((\Spc_*^{C_2})_{S^{2,1}}, \Spc_*^{C_2})}(\{ \mathrm{lan}_{S^{2,1} \wedge Y}(S^{2,1} \wedge Z),X) $$
is an equivalence. The latter map identifies with the following map
$$ \map_{ \Spc_*^{C_2}}(Z,X(Y)) \to \map_{\Spc_*^{C_2}}(Z, X(S^{2,1} \wedge Y)^{S^{2,1}}). $$
Hence by Yoneda, this holds if and only if the canonical morphism $ X(Y) \to  X(S^{2,1} \wedge Y)^{S^{2,1}}$ is an equivalence.
\end{proof}

\begin{definition}
Let $\caC$ be a reduced real $\infty$-category that admits tensors with $S^{2,1}$
and $\caD$ a reduced real $\infty$-category that admits cotensors with $S^{2,1}$.
A reduced real functor $F\colon \caC \to \caD$ is $S^{2,1}$-excisive if for every $X \in \caC$ the canonical morphism $F(X) \to F(S^{2,1}\wedge X)^{S^{2,1}}$ is an equivalence.
\end{definition}

\begin{notation}
Let $\caC$ be a reduced real $\infty$-category that admits tensors with $S^{2,1}$
and $\caD$ a reduced real $\infty$-category that admits cotensors with $S^{2,1}$.
Let $\Exc_{S^{2,1}}(\caC,\caD) \subset  \Fun_{\Spc_*^{C_2}}(\caC, \caD)$
be the full real subcategory of $S^{2,1}$-excisive functors $\caC \to \caD.$
\end{notation}

\begin{proposition}\label{prop:localization_is_monoidal}
Let $\caC$ be a small symmetric monoidal real $\infty$-category and
$\caD$ a presentably symmetric monoidal real $\infty$-category.
The real localization $$ \Fun_{\Spc_*^{C_2}}(\caC, \caD) \leftrightarrows \Exc_{S^{2,1}}(\caC,\caD)$$ is compatible with the symmetric monoidal structure on $\Fun_{\Spc_*^{C_2}}(\caC, \caD)$ given by the Day convolution. That is, tensoring local equivalences yields a local equivalence.
Hence $\Sp^{C_2}$ carries a presentably symmetric monoidal structure.
\end{proposition}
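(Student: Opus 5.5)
The plan is to reduce the compatibility statement to a check on generators, using the description of the local objects as those local with respect to a set $S$ of maps. By the enriched analogue of the preceding proposition (with $\caC$ in place of $(\Spc_*^{C_2})_{S^{2,1}}$ and $\caD$ in place of $\Spc_*^{C_2}$), an object of $\Fun_{\Spc_*^{C_2}}(\caC,\caD)$ is $S^{2,1}$-excisive if and only if it is local with respect to
$$ S=\{ \mathrm{lan}_{S^{2,1}\wedge Y}(S^{2,1}\wedge Z)\to \mathrm{lan}_Y(Z) \mid Y\in\caC,\ Z\in\caD\}. $$
The Yoneda argument given there goes through verbatim, because $\mathrm{lan}_Y(Z)=\caC(Y,-)\wedge Z$ corepresents evaluation at $Y$ followed by $\map_\caD(Z,-)$. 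The resulting localization is therefore accessible. By the standard criterion for compatibility of a localization with a presentably symmetric monoidal structure (Lurie, HA 2.2.1.9 and 4.1.7.4), together with the fact that Day convolution preserves colimits in each variable, it then suffices to show one thing: for every $s\in S$ and every object $G$ in a set of generators of $\Fun_{\Spc_*^{C_2}}(\caC,\caD)$ under colimits and tensors, the map $s\otimes G$ is again a local equivalence.

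Take as generators the functors $\mathrm{lan}_{Y'}(Z')$ with $Y'\in\caC$ and $Z'\in\caD$; these generate because every enriched functor is a colimit of such (enriched co-Yoneda). Day convolution of corepresentables is computed by
$$ \mathrm{lan}_{Y}(Z)\otimes \mathrm{lan}_{Y'}(Z')\simeq \mathrm{lan}_{Y\otimes Y'}(Z\otimes Z'), $$
since left Kan extension along $\otimes$ sends $\caC(Y,-)\wedge\caC(Y',-)$ to $\caC(Y\otimes Y',-)$. Applying this, $s\otimes \mathrm{lan}_{Y'}(Z')$ becomes the map
$$ \mathrm{lan}_{(S^{2,1}\wedge Y)\otimes Y'}\bigl((S^{2,1}\wedge Z)\otimes Z'\bigr)\to \mathrm{lan}_{Y\otimes Y'}(Z\otimes Z'). $$
Because the tensor products on $\caC$ and $\caD$ are real, they commute with tensoring by $S^{2,1}$, so this map identifies with $\mathrm{lan}_{S^{2,1}\wedge(Y\otimes Y')}(S^{2,1}\wedge(Z\otimes Z'))\to\mathrm{lan}_{Y\otimes Y'}(Z\otimes Z')$. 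That map is again in $S$, hence a local equivalence.

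The main obstacle is to make sure that this identification is the canonical map. Concretely, one has to check that the structure map built from the counit $S^{2,1}\wedge Y\to$ (cotensor adjunction) on the $\caC$-side and on the $\caD$-side is compatible with the isomorphism $(S^{2,1}\wedge Y)\otimes Y'\simeq S^{2,1}\wedge(Y\otimes Y')$. To do this I would unwind both maps through Yoneda: each corresponds, on mapping spaces into a local object $X$, to $X(Y\otimes Y')\to X(S^{2,1}\wedge(Y\otimes Y'))^{S^{2,1}}$, and both are induced by the real structure of $X$.

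Once compatibility is established, the localization $\Exc_{S^{2,1}}(\caC,\caD)$ inherits a presentably symmetric monoidal real structure, and the localization functor becomes symmetric monoidal. Specializing to $\caC=(\Spc_*^{C_2})_{S^{2,1}}$ and $\caD=\Spc_*^{C_2}$ gives the claim for $\Sp^{C_2}$.
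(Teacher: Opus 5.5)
Your argument is the paper's proof with more detail: the paper also reduces to tensoring the generating maps $\mathrm{lan}_{S^{2,1}\wedge Y}(S^{2,1}\wedge Z)\to\mathrm{lan}_Y(Z)$ with $\mathrm{lan}_{Y'}(Z')$, and uses $\mathrm{lan}_Y(Z)\otimes\mathrm{lan}_{Y'}(Z')\simeq\mathrm{lan}_{Y\otimes Y'}(Z\otimes Z')$ to identify the result with another generating map. One correction: realness of $-\otimes Y'$ on $\caC$ only gives a comparison map $S^{2,1}\wedge(Y\otimes Y')\to(S^{2,1}\wedge Y)\otimes Y'$, not an equivalence, so on the $\caC$-side this identification is an extra hypothesis (the paper uses it silently too, and it holds for $\caC=(\Spc_*^{C_2})_{S^{2,1}}$); on the $\caD$-side it does follow, since the tensor product is a real left adjoint in each variable.
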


\begin{proof}
It suffices to see that $$ \mathrm{lan}_{S^{2,1} \wedge Y}(S^{2,1} \wedge Z) \otimes \mathrm{lan}_{Y'}(Z') \to \mathrm{lan}_{Y}(Z) \otimes \mathrm{lan}_{Y'}(Z') $$ is a local equivalence for every $$ Y, Y' \in (\Spc_*^{C_2})_{S^{2,1}}, Z,Z' \in \Spc_*^{C_2}. $$

This is true since the latter morphism identifies with the following one: 
$$ \mathrm{lan}_{S^{2,1} \wedge Y \otimes \Y' }(S^{2,1} \wedge Z \otimes Z') \to \mathrm{lan}_{Y \otimes Y'}(Z \otimes Z').$$
\end{proof}

\begin{corollary}\label{cor:localization_is_monoidal}
The real localization $$ \Fun_{\Spc_*^{C_2}}((\Spc_*^{C_2})_{S^{2,1}}, \Spc_*^{C_2}) \leftrightarrows \Sp^{C_2}$$ is compatible with the symmetric monoidal structure. That is, tensoring local equivalences yields a local equivalence.
Hence $\Sp^{C_2}$ carries a presentably symmetric monoidal structure.
\end{corollary}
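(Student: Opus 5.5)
This is a direct specialization of \cref{prop:localization_is_monoidal}. We take $\caC = (\Spc_*^{C_2})_{S^{2,1}}$ and $\caD = \Spc_*^{C_2}$.

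First we check the hypotheses.
\begin{itemize}
\item $(\Spc_*^{C_2})_{S^{2,1}}$ is, by definition, the smallest symmetric monoidal full subcategory of $\Spc_*^{C_2}$ containing $S^{2,1}$. It is therefore a symmetric monoidal real $\infty$-category. Its objects are the smash powers of $S^{2,1}$ together with the unit, so up to equivalence it is small, and we may pass to an essentially small model.
\item $\Spc_*^{C_2}$ is presentably symmetric monoidal, with the smash product commuting with colimits in each variable. It is real via the symmetric monoidal left adjoint $\Spc^{C_2} \to \Spc_*^{C_2}$, $X \mapsto X_+$.
\item The real tensors and cotensors with $S^{2,1}$ needed to define $S^{2,1}$-excisiveness exist: tensoring is smashing, cotensoring is the internal mapping object.
\end{itemize}
With these in place, the functor category $\Fun_{\Spc_*^{C_2}}((\Spc_*^{C_2})_{S^{2,1}}, \Spc_*^{C_2})$ carries the Day convolution structure. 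By \cref{def:V_enriched_K_spectra}, $\Exc_{S^{2,1}}$ in this case is precisely $\Sp^{C_2}$.

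Next we identify the localization. By the preceding proposition (Remark 10.11 of \cite{hsv}), $\Sp^{C_2}$ is the full subcategory of objects local with respect to the set
$$\{\mathrm{lan}_{S^{2,1}\wedge Y}(S^{2,1}\wedge Z)\to \mathrm{lan}_Y(Z)\}.$$
It is therefore the accessible localization appearing in \cref{prop:localization_is_monoidal}. That proposition then gives compatibility with Day convolution: tensoring local equivalences with any object yields local equivalences.

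The key input behind that compatibility is the identification
$$\mathrm{lan}_Y(Z)\otimes \mathrm{lan}_{Y'}(Z') \simeq \mathrm{lan}_{Y\otimes Y'}(Z\otimes Z').$$
It holds because Day convolution is the left Kan extension of the external tensor product along $\otimes$. Applying it, the tensor of a generating local equivalence with a representable is again a generating local equivalence, now with $Y$ replaced by $Y\otimes Y'$, which still lies in $(\Spc_*^{C_2})_{S^{2,1}}$ by closure under $\otimes$. The argument then extends from representables to arbitrary objects, since these functors are generated under colimits by objects of the form $\mathrm{lan}_{Y'}(Z')$ and $\otimes$ preserves colimits in each variable.

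Finally, the standard result on compatible localizations (Lurie, HA 2.2.1.9 together with the presentable version) produces a symmetric monoidal structure on $\Sp^{C_2}$ making the left adjoint symmetric monoidal. The tensor product there is the localized Day convolution, $L(F\otimes G)$. It commutes with colimits in each variable because both $L$ and Day convolution do, and $\Sp^{C_2}$ is presentable as an accessible localization of a presentable category. The real structure is inherited by composing $\Spc^{C_2}\to \Spc_*^{C_2}\to \Fun_{\Spc_*^{C_2}}(\dots)\to \Sp^{C_2}$, each functor being a symmetric monoidal left adjoint.

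The only step needing care is the smallness and symmetric monoidality of $(\Spc_*^{C_2})_{S^{2,1}}$, needed to apply \cref{prop:localization_is_monoidal} verbatim. Beyond that, the corollary is a formal consequence.
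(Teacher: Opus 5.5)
Your proposal is correct and follows the paper's route. The paper gives no separate proof of this corollary: it is \cref{prop:localization_is_monoidal} specialized to $\caC=(\Spc_*^{C_2})_{S^{2,1}}$ and $\caD=\Spc_*^{C_2}$, and the key step there is the same identification you use, $\mathrm{lan}_{S^{2,1}\wedge Y}(S^{2,1}\wedge Z)\otimes\mathrm{lan}_{Y'}(Z')\simeq\mathrm{lan}_{S^{2,1}\wedge Y\wedge Y'}(S^{2,1}\wedge Z\wedge Z')$. Your additions — the hypothesis checks, the extension to arbitrary objects by colimit generation, and the appeal to HA 2.2.1.9 — only make explicit standard reductions that the paper leaves implicit.
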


\begin{remark}\label{description:monoidal_str_enriched_spectra}
The tensor unit of $\Sp^{C_2}$ is the real functor $(\Spc_*^{C_2})_{S^{2,1}} \to \Spc_*^{C_2}$ given  by 
$$X \mapsto \colim_{n \geq 0}(\Omega^{S^{2,1}})^n(\Sigma^{S^{2,1}})^n(X).$$

The tensor product of two real functors $F,G \in \Sp^{C_2}$ is the real functor $ F \otimes G\colon (\Spc_*^{C_2})_{S^{2,1}} \to \Spc_*^{C_2}$ given  by 
$$X \mapsto \colim_{n \geq 0}(\Omega^{S^{2,1}})^n(\Sigma^{S^{2,1}})^n(\wedge_!(\wedge \circ (F \times G))),$$
where $\wedge_!$ is the left Kan extension along $\wedge$.
\end{remark}

\begin{definition} We define the real infinity loop functor $\Omega^\infty\colon \Sp^{C_2} \to \Spc_*^{C_2}$ to be the real functor
$$ \Sp^{C_2} \subset \Fun_{\Spc_*^{C_2}}((\Spc_*^{C_2})_{S^{2,1}}, \Spc_*^{C_2}) \to \Spc_*^{C_2} $$
evaluating at the tensor unit of $\Spc_*^{C_2}$, which is $S^{0,0}$, the set with two elements and the trivial $C_2$-action.
\end{definition}

\begin{remark}
The real functor $\Omega^\infty\colon \Sp^{C_2} \to \Spc_*^{C_2}$ is lax symmetric monoidal. Indeed, we can consider its real left adjoint $\Sigma^\infty\colon \Spc_*^{C_2} \to \Sp^{C_2} $, which factors as the real functor $$\Spc_*^{C_2} \to \Fun_{\Spc_*^{C_2}}((\Spc_*^{C_2})_{S^{2,1}}, \Spc_*^{C_2}) $$ assigning the left Kan extension along the real functor $* \to (\Spc_*^{C_2})_{S^{2,1}}$ taking the tensor unit,
followed by the real localization functor $$ \Fun_{\Spc_*^{C_2}}((\Spc_*^{C_2})_{S^{2,1}}, \Spc_*^{C_2}) \to \Sp^{C_2}.$$

Now, the real functor $\Sigma^\infty\colon \Spc_*^{C_2} \to \Sp^{C_2} $ is symmetric monoidal because both of these maps are so, and therefore $\Omega^\infty$ is lax symmetric monoidal.
\end{remark}

The following result provides a multiplicative universal property of enriched spectra. Before that, we need to briefly introduce real excisive functors. 

\begin{definition}Let $F\colon\caC\to\caD$ be a reduced real functor between pointed real $\infty$-categories, where $\caC$ admits tensors with $S^{1,1}$ as $\Spc^{C_2}_\ast$-enriched $\infty$-category and $\caD $ admits cotensors with $S^{1,1}$ as $\Spc^{C_2}_\ast$-enriched $\infty$-category. We say that $F\colon \caC \to \caD$ is genuine excisive if for every $X \in \caC$ the natural map below is an equivalence $$ F(X) \to \Omega^{1,1}(F(\Sigma^{1,1}(X))).$$
\end{definition}

\begin{notation}Let $\caC$ be a small symmetric monoidal real $\infty$-category and
$\caD$ a presentably symmetric monoidal real $\infty$-category. We denote by $$\Exc^{\otimes}_{S^{2,1}}(\caC,\caD)$$
the $\infty$-category of commutative algebras in $\Exc_{S^{2,1}}(\caC,\caD).$

\end{notation}

\begin{theorem}\label{genspec}
Let $\caC$ be a small real symmetric monoidal $\infty$-category. Suppose that the tensor with $S^{2,1}\in \Spc^C_2$ exists in $\caC$ and that the cotensor with $S^{2,1}$ exists in $\caD$. Then the real functor $\Omega^\infty \colon \Sp^{C_2} \to \Spc^{C_2}_\ast$ induces a symmetric monoidal equivalence $$\Exc_{S^{2,1}}(\caC,\Sp^{C_2}) \to \Exc_{S^{2,1}}(\caC,\Spc^{C_2}_\ast).$$
\end{theorem}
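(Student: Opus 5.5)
We prove the statement in two stages. First we show that postcomposition with $\Omega^\infty$ is an equivalence of underlying real $\infty$-categories. Then we show that this equivalence is symmetric monoidal for the Day convolution structures localized as in \cref{prop:localization_is_monoidal}.

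For the underlying equivalence, we use the presentation of $\Sp^{C_2}$ in \cref{def:V_enriched_K_spectra} as $S^{2,1}$-excisive reduced real functors $(\Spc_*^{C_2})_{S^{2,1}} \to \Spc_*^{C_2}$. We then identify $\Exc_{S^{2,1}}(\caC,\Sp^{C_2})$ with the full subcategory of reduced real functors $\caC \otimes (\Spc_*^{C_2})_{S^{2,1}} \to \Spc_*^{C_2}$ that are $S^{2,1}$-excisive separately in each variable. Here we use the adjunction $\Fun(\caC,\Fun(\caE,\caD))\simeq\Fun(\caC\otimes\caE,\caD)$ for real functor categories.

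An $S^{2,1}$-excisive functor $F$ satisfies $F(S^{2,1}\wedge X, Y)\simeq F(X, S^{2,1}\wedge Y)$ after applying $\Omega^{S^{2,1}}$. Both sides identify with $F(X,Y)$ twisted by $\Omega^{S^{2,1}}$, so the excisiveness in the second variable is automatic. Concretely, $F(X,Y)$ for $Y=(S^{2,1})^{\wedge n}$ is recovered as $\Omega^{S^{2,1}\cdot ?}$-shifts of $F(X,S^{0,0})$ via $F(X,S^{0,0}) \simeq F(S^{2,1}\wedge X,S^{0,0})^{S^{2,1}}$. Since the objects of $(\Spc_*^{C_2})_{S^{2,1}}$ are the smash powers of $S^{2,1}$ (the subcategory is generated by $S^{2,1}$), we get an inverse to evaluation at $S^{0,0}$. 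It sends $G\colon\caC\to\Spc_*^{C_2}$ to $X\mapsto\bigl((S^{2,1})^{\wedge n}\mapsto G((S^{2,1})^{\wedge n}\wedge X)\bigr)$. This uses that $\caC$ admits tensors with $S^{2,1}$.

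We check that this assignment is a real functor in the second variable. It is the composite of the real tensoring functor $(\Spc_*^{C_2})_{S^{2,1}}\otimes\caC\to\caC$ with $G$, so it is real. The map is $S^{2,1}$-excisive in that variable precisely because $G$ is $S^{2,1}$-excisive. Both composites with $\Omega^\infty$ are equivalent to the identity by the excisiveness just discussed. This gives the equivalence of real $\infty$-categories.

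For monoidality, $\Omega^\infty$ is lax symmetric monoidal by the preceding remark. Hence postcomposition is lax symmetric monoidal for the pointwise/Day convolution structures, and remains so after the localizations of \cref{prop:localization_is_monoidal} and \cref{cor:localization_is_monoidal}. Since it is an equivalence, it suffices to check that the lax structure maps
\[ L(\Omega^\infty F\otimes\Omega^\infty G)\to\Omega^\infty L(F\otimes G) \]
and the unit map are equivalences. This is the main obstacle.

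To handle it, we would reduce to corepresentable generators. The localized Day convolution preserves colimits in each variable, and the inverse equivalence constructed above also preserves colimits. So it suffices to treat $F=L\,\mathrm{lan}_c(Z)$ and $G=L\,\mathrm{lan}_{c'}(Z')$. For these, both sides become $L\,\mathrm{lan}_{c\otimes c'}(Z\wedge Z')$, using the formula in the proof of \cref{prop:localization_is_monoidal} together with $\Omega^\infty\Sigma^\infty$ being the localization. The delicate point is to compare the two localizations, excisive in $\caC$ versus excisive in both variables. We expect that the colimit formula of \cref{description:monoidal_str_enriched_spectra} makes both sides explicit and directly comparable.
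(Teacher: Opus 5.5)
Your second stage has a genuine gap, and that stage is the actual content of the statement; the underlying equivalence is what the paper simply imports from \cite[Proposition~10.21]{hsv}. You reduce monoidality to showing that the lax structure maps $L(\Omega^\infty F\otimes\Omega^\infty G)\to\Omega^\infty L(F\otimes G)$ and the unit map are equivalences. You call this the main obstacle and then only state an expectation. The generator reduction you outline would not close it either. Observing that source and target both ``become $L\,\mathrm{lan}_{c\otimes c'}(Z\wedge Z')$'' shows they are abstractly equivalent, not that the given structure map is an equivalence, and the comparison of the two localizations is left open.

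Your first stage is a hands-on substitute for the citation, and the inverse $G\mapsto\bigl(X\mapsto(Y\mapsto G(Y\odot X))\bigr)$ is the right one. The justification should, however, be $F(X,Y)\simeq\Omega^{Y}F(Y\odot X,Y)\simeq F(Y\odot X,S^{0,0})$, using excisiveness first in the $\caC$-variable and then in the spectrum variable. It is not ``$\Omega$-shifts of $F(X,S^{0,0})$''. You would also still have to check naturality in $Y$ as a real functor.

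The missing idea is to work with the left adjoint rather than the right adjoint, which makes the obstacle disappear. Write $\Sigma^\infty_*$ and $\Omega^\infty_*$ for postcomposition. Since $\Sigma^\infty\colon\Spc_*^{C_2}\to\Sp^{C_2}$ is a strong symmetric monoidal real left adjoint, $\Sigma^\infty_*$ commutes with the colimits defining Day convolution and is strong symmetric monoidal. Composing with the symmetric monoidal excisive localization of \cref{prop:localization_is_monoidal} gives a strong symmetric monoidal functor $L\Sigma^\infty_*\colon\Exc_{S^{2,1}}(\caC,\Spc_*^{C_2})\to\Exc_{S^{2,1}}(\caC,\Sp^{C_2})$ for the localized Day convolutions. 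It is strong there because it inverts local equivalences: its right adjoint $\Omega^\infty_*$ preserves $S^{2,1}$-excisive functors, as $\Omega^\infty$ commutes with cotensors. This functor is left adjoint to $\Omega^\infty_*$, and the lax structure on $\Omega^\infty_*$ is the mate of its strong structure. Each lax structure map is then the composite of a unit, the inverse of a strong structure map of $L\Sigma^\infty_*$, and $\Omega^\infty_*$ applied to counits. Once $\Omega^\infty_*$ is known to be an equivalence, all of these are invertible. Equivalently, an equivalence whose inverse is symmetric monoidal is a symmetric monoidal equivalence. This is exactly how the paper argues, and it needs neither the explicit colimit formula nor any generator computation.
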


\begin{proof}
By \cref{prop:localization_is_monoidal} we have a symmetric monoidal real functor
$$\Fun^{\Spc^{C_2}_*}(\Spc^{C_2}_{S^{2,1}},\Spc^{C_2}) \to \Sp^{C_2}$$
left adjoint to the real embedding. It gives rise to a symmetric monoidal real functor 
$$\Sigma^\infty_{S^{2,1}}: \Spc_*^{C_2} \to \Fun^{\Spc^{C_2}_*}((\Spc_*^{C_2})_{S^{2,1}},\Spc_*^{C_2}) \to \Sp^{C_2}, $$
where the first functor is the left adjoint of the real functor evaluating at the tensor unit. This real functor is part of a real adjunction $\Sigma^\infty_{S^{2,1}}\dashv \Omega^\infty_{S^{2,1}}$ that yields a left adjoint real symmetric monoidal functor
$$  \Exc_{S^{2,1}}(C,\Spc_*^{C_2}) \rightleftarrows  \Exc_{S^{2,1}}(C,\Sp^{C_2}).$$
that is an equivalence by \cite[Proposition 10.21]{hsv}.
\end{proof}

\section{Multiplicative structure and universal property}

\subsection{Additivization and real $K$-theory functor}

\begin{definition}Let $\caD $ be a presentable reduced real $\infty$-category. We define a real theory with values in $\caD$ as a reduced real functor $\Wald^\gd_\infty \to \caD$
preserving $\aleph_1$-filtered colimits, finite products and cotensors with $C_2$. \footnote{In \cite{hsv} we call a reduced real functor that preserves finite products and cotensors with $C_2$ \emph{genuine preadditive}.}
\end{definition}

\begin{notation} We denote by $\mathrm{rThy}_\caD$ the full subcategory of real functors $\Wald^\gd_\infty \to \caD$ spanned by the real theories. For $\caD=\Spc^{C_2}_\ast$ we simply write $\mathrm{rThy}$.
\end{notation}

\begin{remark}\label{rmk:rThy_aleph1_description}
There is a canonical equivalence
$$\Fun_{\Spc^{C_2}_*}^{\aleph_1\mathrm{-fcol}} (\Wald^\gd_\infty,\caD)\simeq \Fun_{\Spc^{C_2}_*}((\Wald^\gd_\infty)^{\aleph_1},\caD) $$
where $\Fun_{\Spc^{C_2}_*}^{\aleph_1\mathrm{-fcol}} (\Wald^\gd_\infty,\caD)$ is the full subcategory spanned by the real functors preserving $\aleph_1$-filtered colimits. 

Moreover, under this equivalence the real functors preserving finite products and cotensors with $C_2$ in each of the categories correspond. Hence, we can see $\mathrm{rThy}_\caD$ as a full subcategory of 
$$ \Fun_{\Spc^{C_2}_*}((\Wald^\gd_\infty)^{\aleph_1},\caD) $$
spanned by the real functors preserving finite products and cotensors with $C_2$.

\end{remark}

\begin{remark} A careful reader may wonder, when comparing with the unenriched case (see \cite{barwick.wald}), why we are asking for the preservation of finite products and cotensors with $C_2$. We ask real theories to preserve finite products to ensure that real theories admit a unique lift to connective $C^2$-spectra. This will be later used to ensure that additive theories admit a unique lift to genuine $C^2$-spectra.

Note that in our previous work \cite{hsv}, we had separated the condition of preserving finite products, and had called real theories with that property \emph{preadditive} theories. We think that the current nomenclature presents a better picture of the actual situation and makes the statements cleaner. 
\end{remark}

\begin{proposition}\label{prop:extension_DWald} Let $\caD$ be genuine preadditive. Any real theory $\phi\colon(\Wald^\gd_\infty)^{\aleph_1} \to \caD$ can be uniquely extended to a left adjoint reduced real functor $\phi\colon\D\Wald^\gd_\infty \to \caD$.

\end{proposition}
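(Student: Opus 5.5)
We need $\D\Wald^\gd_\infty$. It has not been defined in the excerpt, but in \cite{hsv} it is the real $\infty$-category obtained from $(\Wald^\gd_\infty)^{\aleph_1}$ by freely adjoining (real) colimits while keeping finite products and cotensors with $C_2$. Concretely, it is the full real subcategory of real presheaves
$$\D\Wald^\gd_\infty \subset \Fun_{\Spc^{C_2}_*}(((\Wald^\gd_\infty)^{\aleph_1})^\op, \Spc^{C_2}_*)$$
spanned by the reduced real presheaves that send finite products and cotensors with $C_2$ in $(\Wald^\gd_\infty)^{\aleph_1}$ to the corresponding limits in $\Spc^{C_2}_*$. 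I will take this description (a "genuine preadditive presheaves" category) as the definition. I also take the hypothesis "$\caD$ genuine preadditive" to mean that $\caD$ is a presentable pointed real $\infty$-category in which finite coproducts agree with finite products and tensors with $C_2$ agree with cotensors with $C_2$. The plan has three steps.

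\textbf{Step 1: extension from presheaves.} Use the enriched universal property of real presheaves. Restriction along the real Yoneda embedding $\y$ gives an equivalence between real left adjoints $\Fun_{\Spc^{C_2}_*}(((\Wald^\gd_\infty)^{\aleph_1})^\op, \Spc^{C_2}_*) \to \caD$ and reduced real functors $(\Wald^\gd_\infty)^{\aleph_1} \to \caD$. The extension is the enriched left Kan extension $\y_!\phi$.

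\textbf{Step 2: localization.} Show that $\D\Wald^\gd_\infty$ is a real accessible localization of the presheaf category. It is the local objects for the small set of maps
$$\y(A)\sqcup \y(B) \to \y(A\times B), \qquad C_2\otimes \y(A) \to \y(A^{C_2}),$$
where $\sqcup$ and $\otimes$ are taken in presheaves. This is the same Yoneda computation as in the proof of \cite[Remark 10.11]{hsv} recalled above. Real left adjoints out of $\D\Wald^\gd_\infty$ then correspond to real left adjoints out of presheaves that invert these maps. Their restrictions are exactly the reduced real functors $\phi$ for which
$$\phi(A)\sqcup\phi(B)\to \phi(A\times B), \qquad C_2\otimes\phi(A)\to\phi(A^{C_2})$$
are equivalences.

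\textbf{Step 3: checking the condition.} A real theory $\phi$ preserves finite products and cotensors with $C_2$ (\cref{rmk:rThy_aleph1_description}). Because $\caD$ is genuine preadditive, the coproduct comparison becomes $\phi(A)\times\phi(B)\simeq\phi(A\times B)$ and the tensor comparison becomes $\phi(A)^{C_2}\simeq\phi(A^{C_2})$. The second identification uses that the norm-type map $C_2\otimes X\to X^{C_2}$ is an equivalence in $\caD$, and that this map is compatible with the comparison maps. Hence $\phi$ inverts the generating local maps and extends uniquely.

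\textbf{Main obstacle.} The delicate point is Step 3's compatibility: one must check that the canonical map obtained by applying $\phi$ to $C_2\otimes\y(A)\to\y(A^{C_2})$ agrees with the composite of the norm map in $\caD$ and $\phi$'s cotensor comparison. I would verify this by adjunction, reducing to the case $\caD=\Spc^{C_2}_*$-valued mapping objects. Uniqueness is formal from the two universal properties.
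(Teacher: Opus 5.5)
Your argument is correct and follows essentially the paper's route. The paper's one-line proof cites \cite[Theorem 3.9.2]{heine2024higher}, the universal property of the real presheaves on $(\Wald^\gd_\infty)^{\aleph_1}$ preserving finite products and cotensors with $C_2$ as a free cocompletion relative to those weighted colimits; your Steps 1--2 (enriched left Kan extension, then accessible localization at $\y(A)\sqcup\y(B)\to\y(A\times B)$ and $C_2\otimes\y(A)\to\y(A^{C_2})$) reprove exactly that instance, and your Step 3, including the compatibility of the norm map $C_2\otimes X\to X^{C_2}$ with $\phi$, makes explicit the passage from products and cotensors to coproducts and tensors that genuine preadditivity of $\caD$ and $\Wald^\gd_\infty$ is there to supply and that the paper leaves implicit.
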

\begin{proof}
This is a direct consequence of \cref{rmk:rThy_aleph1_description} and \cite[Theorem 3.9.2]{heine2024higher}.
\end{proof}

From now on, we want to focus on those real theories that invert a certain map, which codifies additivity for real $K$-theory; we recall the following construction from  \cite[Section 8.2]{hsv}.

\begin{notation} For any $\caD$ in $\Wald^\gd_\infty$, we denote by $\widetilde{\caD\times\caD}$ the cotensor of $\caD$ with $C_2$ in $\Wald^\gd_\infty$.
\end{notation}

\begin{construction}\label{const:gamma_3}
For every $\caD \in \Wald^\gd_\infty$ the morphism $[1] \simeq \{0 \to 1\} \subset [3]$ in $\Delta$ yields the following map in $\Exact_\infty$
\begin{equation}
\begin{tikzcd}[row sep=tiny]
S(\caD)_3\ar[r]        & S(\caD)_1 \simeq \caD\\
\ (A \to B \to C)\ar[r, mapsto]     &A.
\end{tikzcd}
\end{equation}

On the other hand, the morphism $[1] \simeq \{1 \to 2 \} \subset [3]$ in $\Delta^{hC_2}$ yields a map in $ \Wald^\gd_\infty$ as below
\begin{equation}
\begin{tikzcd}[row sep=tiny]
S(\caD)_3\ar[r]        & S(\caD)_1 \simeq \caD\\
\ (A \to B \to C)\ar[r, mapsto]     & B/A.
\end{tikzcd}
\end{equation}

The adjoint map $ S(\caD)_3 \to \widetilde{\caD \times \caD}$ in $\Wald^\gd_\infty$ of the first map together with the second map give rise to a map in $\Wald^\gd_\infty $

\begin{equation}
\gamma_3\colon S(\caD)_3 \to \widetilde{\caD \times \caD} \times \caD\hspace{0.3cm}\text{ given by }\hspace{0.3cm} \ (A \to B \to C) \mapsto (A, C/B, B/A).
\end{equation}
\end{construction}

\begin{definition}\label{def:pre_semi_additive}Let $\caD$ be a reduced real $\infty$-category that admits reduced cotensors with $S^{1,1}$ and $\phi\colon\Wald_\infty^\gd\to \caD$ a real theory. We say that $\phi$ is \emph{additive} if 
\begin{itemize}
    \item it inverts the map $\gamma_3$ of \cref{const:gamma_3} in $\Wald^\gd_\infty $ for every $\caC \in \Wald^\gd_\infty$, and 
    \item it is genuine excisive, that is, for every $\caC \in \Wald^\gd_\infty$ the natural map $ F(\caC) \to \Omega^{1,1}(F(\Sigma^{1,1}(\caC)))$ is an equivalence.
\end{itemize}
\end{definition}

\begin{notation}
We denote by $$ \Add_\caD \subset\mathrm{rThy}_\caD$$ the full subcategory spanned by the additive theories. For $\caD=\Spc^{C_2}_\ast$ we simply write $\Add$.
\end{notation}

We will build on the relation established in \cite[Section 9.4]{hsv} between the notions above.
\begin{proposition}[{\cite[Theorem 9.32]{hsv}}]\label{add_localization}
The embedding $\Add\hookrightarrow r\mathrm{Thy}$ admits a left adjoint, which we call $\add$.
\end{proposition}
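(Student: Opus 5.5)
The embedding $\Add\hookrightarrow \mathrm{rThy}$ should be realized as the inclusion of the local objects for a small set of morphisms in a presentable real $\infty$-category. The left adjoint $\add$ then comes from the enriched localization theory, as in the proof that $\Sp^{C_2}$ is a localization of real functors.

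First, by \cref{rmk:rThy_aleph1_description}, I identify $\mathrm{rThy}$ with the full subcategory of $\Fun_{\Spc^{C_2}_*}((\Wald^\gd_\infty)^{\aleph_1},\Spc^{C_2}_*)$ spanned by functors preserving finite products and cotensors with $C_2$. Since $(\Wald^\gd_\infty)^{\aleph_1}$ is essentially small, this functor category is presentable. The conditions of preserving finite products and cotensors with $C_2$ are detected by a set of maps between corepresentables. Concretely, preserving finite products means being local for the maps $\mathrm{lan}_{\caC}(Z)\sqcup \mathrm{lan}_{\caD}(Z) \to \mathrm{lan}_{\caC\times\caD}(Z)$, and preserving the cotensor with $C_2$ means being local for $C_2\otimes \mathrm{lan}_{\caC}(Z)\to \mathrm{lan}_{\widetilde{\caC\times\caC}}(Z)$, where $Z$ ranges over generators of $\Spc^{C_2}_*$. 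Hence $\mathrm{rThy}$ is a presentable accessible localization.

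Second, the two additivity conditions of \cref{def:pre_semi_additive} are likewise locality conditions with respect to a set of morphisms between corepresentables. Inverting $\gamma_3$ means being local for $\mathrm{lan}_{\widetilde{\caC\times\caC}\times \caC}(Z)\to \mathrm{lan}_{S(\caC)_3}(Z)$, with $\caC$ ranging over $\aleph_1$-compact objects. This restriction is allowed because $S(-)_3$, cotensors and products preserve $\aleph_1$-filtered colimits, so the condition for general $\caC$ follows by writing $\caC$ as an $\aleph_1$-filtered colimit. Genuine excisiveness $F(\caC)\simeq \Omega^{1,1}F(\Sigma^{1,1}\caC)$ is, by the same Yoneda computation as in the proof characterizing $\Sp^{C_2}$, locality for $S^{1,1}\wedge\mathrm{lan}_{\Sigma^{1,1}\caC}(Z)\to \mathrm{lan}_{\caC}(Z)$. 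This step needs $\Sigma^{1,1}$ to exist in $\Wald^\gd_\infty$ and to preserve $\aleph_1$-compact objects, which I take from \cite{hsv}.

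Taking the union $S$ of all these sets, $\Add$ is exactly the full subcategory of $S$-local objects of the presentable real $\infty$-category. The enriched version of the presentable localization theorem \cite{heine2024higher} then gives a real left adjoint to the inclusion, and this adjoint restricts to $\mathrm{rThy}\to\Add$. The main obstacle I expect is the smallness and compatibility bookkeeping: checking that the localizing set can be chosen small and $\Spc^{C_2}$-enriched. This means closing $S$ under tensors with the generators of $\Spc^{C_2}_*$, so that locality is also detected on real mapping objects and not only on underlying mapping spaces.
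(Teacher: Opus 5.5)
Your argument is correct, but it follows a different route from the one behind the statement. The paper gives no proof here and imports the result from \cite[Theorem 9.32]{hsv}. There the additivization is obtained more concretely: one passes to fissile theories (those inverting the maps $\gamma_3$, modelled by $\D\Wald^{\gd,\fiss}_\infty$) and then takes a genuine excisive approximation. This is what the last section of the paper alludes to when it describes $\KR'$ as the genuine excisive approximation of $\iota$ and identifies $\Add_{\Sp^{C_2}}$ with left adjoint excisive functors on $\D\Wald^{\gd,\fiss}_\infty$.

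You instead exhibit $\Add$ as the local objects for an explicit small set of maps between corepresentables in $\Fun_{\Spc^{C_2}_*}((\Wald^\gd_\infty)^{\aleph_1},\Spc^{C_2}_*)$, and then invoke the presentable localization theorem. This is shorter and needs none of the fissile or $S$-construction analysis. It is also the presentation the paper itself uses afterwards. Your generating local equivalences are exactly those of \cref{prop:subcats_are_monodal_localizations}, \cref{prop:semiadd_loc_is_monoidal} and (with $S^{1,1}$ in place of $S^{2,1}$) \cref{prop:localization_is_monoidal}. Combining them via \cref{lem:pullback_symetric_loc} re-proves the existence of $\add$ together with its monoidality. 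What your route does not give is any formula for $\add(\phi)$, so on its own it says nothing about what $\KR=\add(\iota)$ actually is. The explicit construction in \cite{hsv} is what makes $\KR$ computable and yields the factorization through $\D\Wald^{\gd,\fiss}_\infty$.

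Some bookkeeping remarks:
\begin{itemize}
\item You need not only $\Sigma^{1,1}\caC$ but also $S(\caC)_3$ and $\widetilde{\caC\times\caC}\times\caC$ to be $\aleph_1$-compact for $\aleph_1$-compact $\caC$. Otherwise your maps do not live among corepresentables on $(\Wald^\gd_\infty)^{\aleph_1}$.
\item When reducing the excisiveness condition to $\aleph_1$-compact $\caC$, you use that $\Omega^{1,1}$ commutes with filtered colimits, which holds because $S^{1,1}$ is a finite $C_2$-space.
\item Your reformulation of genuine excisiveness as a locality condition depends on $\Sigma^{1,1}\caC$ being the tensor in $\Wald^\gd_\infty$ itself, as literally stated in \cref{def:pre_semi_additive}. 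If the suspension were instead taken in $\D\Wald^\gd_\infty$, the extended functor's value there would be a geometric realization of values of $F$. That is not corepresentable, and you would have to fall back on the explicit approximation.
\end{itemize}
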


\begin{definition}\label{def:KR_space}
The real $K$-theory functor
$$\KR\colon\Wald_\infty^\gd\to \Spc^{C_2}_\ast$$
is defined as the additivization $\KR\coloneqq\add(\iota)$ of the functor $\iota\colon\Wald_\infty^\gd\to\Spc^{C_2}_\ast$, which sends an $\infty$-category with genuine duality $\caC$ to its maximal subspace in $\caC$ equipped with the restricted genuine duality.
\end{definition}

\begin{remark}
We showed in \cite[Corollary 10.36]{hsv} that real functor $\mathrm{KR}\colon \Wald_\infty^\gd \to \Spc^{C_2}_\ast$ lifts to a real functor $$\mathrm{KR}\colon \Wald_\infty^\gd \to \Sp_{\geq0}^{C_2},$$
for which we use the same notation.
\end{remark}

\subsection{Monoidality of additivization}

Now we set to prove that the localization \cref{add_localization} is monoidal, for which we will use the following results. 

\begin{proposition}\label{prop:subcats_are_monodal_localizations}Let $\caV$ a presentable $\infty$-category, and $\caC$ is a small symmetric monoidal $\caV$-enriched category, and consider $\caP_\caV(\caC)=\Fun_\caV(\caC^\op,\caV)$.
\begin{enumerate}
\item The full subcategory of 
$\caP_{ \caV }(\caC)$
spanned by the finite products preserving presheaves is a symmetric monoidal accessible localization.
\item The full subcategory of 
$ \caP_{ \caV }(\caC)$ 
spanned by the presheaves preserving cotensors with objects of $\caK \subset \caV$ is a symmetric monoidal accessible localization.
\end{enumerate}
\end{proposition}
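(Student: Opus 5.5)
Both parts will be handled by the same strategy used in \cref{prop:localization_is_monoidal}. First we exhibit each full subcategory as the local objects for a small set $S$ of morphisms in $\caP_\caV(\caC)$. This gives an accessible localization: $\caP_\caV(\caC)$ is presentable, and local objects for a small set form an accessible reflective subcategory. Then we check that the Day convolution tensor product preserves $S$-local equivalences. Since Day convolution on $\caP_\caV(\caC)$ preserves colimits in each variable, and $\caP_\caV(\caC)$ is generated under colimits by objects $\y(c)\otimes V$ with $c\in\caC$, $V\in\caV$, it suffices to show the following: for every $s\in S$ and every generator $\y(c')\otimes V'$, the map $s\otimes \y(c')\otimes V'$ is again an $S$-local equivalence. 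We will arrange this by showing it is again a map in $S$, after closing $S$ under tensoring with such generators.

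For (1) we use the $\caV$-enriched Yoneda lemma. A presheaf $F$ preserves finite products, meaning it sends finite coproducts in $\caC$ to products, exactly when it is local with respect to
\[ S_1=\{\, (\y(c_1)\amalg\cdots\amalg \y(c_n))\otimes V \to \y(c_1\amalg\cdots\amalg c_n)\otimes V \,\}, \]
including $n=0$, where the map is $\emptyset\to \y(\emptyset)$. Here we assume $\caC$ has finite coproducts and that $\otimes$ preserves them in each variable; this is the setting of application, where $(\Wald^\gd_\infty)^{\aleph_1}$ has biproducts and the tensor is additive. Since Day convolution satisfies $\y(a)\otimes\y(b)\simeq \y(a\otimes b)$ and is cocontinuous, tensoring the generating map with $\y(c')\otimes V'$ gives
\[ \textstyle\coprod_i \y(c_i\otimes c')\otimes(V\otimes V') \to \y\big((\coprod_i c_i)\otimes c'\big)\otimes (V\otimes V'). \]
Using $(\coprod c_i)\otimes c'\simeq\coprod(c_i\otimes c')$, this is again an element of $S_1$.

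For (2) we proceed the same way. Preserving cotensors with $K\in\caK$ means, for the presheaf, sending the tensor $K\otimes c$ in $\caC$ to the cotensor $F(c)^K$ in $\caV$. This is locality with respect to
\[ S_2=\{\, \y(c)\otimes K\otimes V \to \y(K\otimes c)\otimes V \,\}, \]
since $\map(\y(c)\otimes K\otimes V,F)\simeq \map(V,F(c)^K)$. Tensoring with $\y(c')\otimes V'$ yields
\[ \y(c\otimes c')\otimes K\otimes V\otimes V' \to \y((K\otimes c)\otimes c')\otimes V\otimes V'. \]
This lies in $S_2$ once we know $(K\otimes c)\otimes c'\simeq K\otimes(c\otimes c')$, i.e.\ that the monoidal structure of $\caC$ is $\caV$-enriched, so that tensoring with objects of $\caV$ commutes with $-\otimes c'$. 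That holds because $\caC$ is a symmetric monoidal $\caV$-enriched category and the relevant tensors exist.

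The main obstacle is precisely this compatibility. We must justify that $-\otimes c'$ preserves finite coproducts and $\caV$-tensors in $\caC$, which has to be extracted from the enriched symmetric monoidal structure. If it is not automatic, we would add it as a standing hypothesis, since it holds for $(\Wald^\gd_\infty)^{\aleph_1}$ by closedness of the monoidal structure in \cref{fhbhnj}. A secondary point is that the generators of $\caP_\caV(\caC)$ under colimits are $\y(c)\otimes V$ with $V$ ranging over all of $\caV$ rather than only over $\caV$-generators; this is harmless because $S_1$ and $S_2$ are indexed over all $V$ (or over a small set of generators of $\caV$, to keep $S$ small).
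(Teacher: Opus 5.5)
Your proposal follows the paper's proof: both identify the generating local equivalences $\coprod_i \y(A_i)\to\y(\coprod_i A_i)$ and $v\odot\y(A)\to\y(v\odot A)$, and check that tensoring with a representable $\y(X)$ turns each into another generator via $\y(A)\otimes\y(X)\simeq\y(A\otimes X)$. Your extra $\otimes V$ factor just makes the enriched localization explicit, and the paper also uses, without stating it, that $-\otimes X$ on $\caC$ preserves coproducts and $\caV$-tensors, so adding this as a hypothesis is right. One caveat: in the application $\caC$ is $((\Wald^\gd_\infty)^{\aleph_1})^\op$, so that hypothesis really concerns products and $C_2$-cotensors in $\Wald^\gd_\infty$. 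Closedness of the monoidal structure alone does not supply these; they must come from additional structure, such as the biproducts you mention in part (1).
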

\begin{proof}

1: The generating local equivalences are of the form : 
$$ \alpha\colon \y(A_1) \coprod \dots \coprod \y(A_n) \to \y(A_1 \coprod \dots \coprod A_n)$$
for $A_1,\dots,A_n \in \caC$ and $n \geq 0.$
We need to see that for every $X \in \caC$
the morphism $\alpha \otimes \y(X)$
is a local equivalence.

But $\alpha \otimes \y(X)$ is
$$ \y(A_1 \otimes X) \coprod \dots \coprod \y(A_n \otimes X ) \to \y(A_1 \otimes X \coprod \dots \coprod A_n \otimes X).$$

2: The generating local equivalences are of the form : 
$$ \beta\colon v \odot y(A) \to \y(v \odot A)$$
for $A \in C$ and $v \in \caK $
We need to see that for every $X \in C$
the morphism $\beta \otimes \y(X)$
is a local equivalence.

But $ \beta \otimes \y(X)$ is
$$ \beta: v \odot y(A \otimes X) \to \y(v \odot (A \otimes X)).$$
\end{proof}

\begin{lemma}\label{lem:pullback_symetric_loc}

Let $\caC$ be a presentably symmetric monoidal real $\infty$-category
and $\caA \subset \caC, \caB \subset \caC$ real symmetric monoidal localizations.
Then also $\caA \times_\caC \caB \subset \caA $ is a real symmetric monoidal localization.
\end{lemma}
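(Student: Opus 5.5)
We view $\caA$ and $\caB$ as real accessible localizations of $\caC$, each determined by a class of local equivalences, and show directly that $\caA\times_\caC\caB$ (the full subcategory $\caA\cap\caB$) is a real symmetric monoidal localization of $\caA$. Let $L_\caA$ and $L_\caB$ be the real left adjoints. Since $\caA$ and $\caB$ are presentable real accessible localizations of $\caC$, they are given by strongly saturated classes $S_\caA$ and $S_\caB$ of morphisms, generated by sets and closed under tensors with objects of $\Spc^{C_2}$. We take these classes to be the $L_\caA$-equivalences and the $L_\caB$-equivalences respectively.

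First we show that $\caA\cap\caB$ is an accessible real localization of $\caC$. Its objects are exactly the objects local for $S_\caA\cup S_\caB$, so it is the localization at the strongly saturated class $S$ generated by the union of the two generating sets. This localization is presentable and real, because the generating set may be closed under tensors with the generators of $\Spc^{C_2}$. The class $S$ is compatible with the monoidal structure: if $f\in S_\caA$ then $f\otimes X\in S_\caA\subset S$ for every $X$, and likewise for $S_\caB$. The class of morphisms $f$ with $f\otimes X\in S$ for all $X$ is strongly saturated, because $-\otimes X$ preserves colimits componentwise. This class contains both generating sets, hence it contains $S$. So $\caA\cap\caB\subset\caC$ is a real symmetric monoidal localization.

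Next we restrict to $\caA$. The inclusion $\caA\cap\caB\subset\caA$ has left adjoint $L|_\caA$, where $L$ is the localization of $\caC$ onto $\caA\cap\caB$. This works because $\caA\cap\caB\subset\caA\subset\caC$ and $L(A)\in\caA$. The inclusion is real because both embeddings into $\caC$ are real. For monoidality, take the symmetric monoidal structure on $\caA$ given by $A\otimes_\caA A'=L_\caA(A\otimes A')$. We must show that if $f$ in $\caA$ is an $L|_\caA$-equivalence, then $f\otimes_\caA A'$ is one as well. The map $f$ is an $L$-equivalence in $\caC$, so $f\otimes A'$ is an $L$-equivalence by the previous paragraph. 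The map $f\otimes A'\to L_\caA(f\otimes A')$ differs from it by units of $L_\caA$, which are $S_\caA$-equivalences and hence $S$-equivalences. By two-out-of-three, $f\otimes_\caA A'$ is an $L$-equivalence. This shows that $\caA\cap\caB\subset\caA$ is a real symmetric monoidal localization. Its induced tensor product $L(A\otimes A')$ agrees with the one inherited from $\caC$.

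The main point needing care is the identification in the second paragraph: the intersection must be the localization at the strongly saturated class generated by the union of the generators. This requires the two classes to be of small generation, which follows from accessibility of both localizations. Everything else is formal two-out-of-three bookkeeping.
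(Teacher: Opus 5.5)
Your proof is correct, but it takes a different route from the paper.

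\textbf{The paper's route.} The paper argues formally, inside the $\infty$-category of presentably symmetric monoidal real $\infty$-categories and symmetric monoidal real left adjoints. There the localizations $\caC\to\caA$ and $\caC\to\caB$ are morphisms, and the paper forms the cobase change $\caA\to\caA\coprod_\caC\caB$. Its lax symmetric monoidal right adjoint is then identified with the base change $\caA\times_\caC\caB\subset\caA$ of the fully faithful right adjoint $\caB\subset\caC$. The inclusion is therefore a fully faithful lax monoidal right adjoint of a symmetric monoidal left adjoint, with no local-equivalence bookkeeping at all.

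\textbf{Your route.} You work with strongly saturated classes. You first show that $\caA\cap\caB\subset\caC$ is itself a real symmetric monoidal localization, because the class of maps that become local equivalences after tensoring is strongly saturated and contains both generating sets. You then check directly that $L|_\caA$-equivalences are stable under $-\otimes_\caA A'$, using two-out-of-three against the units of $L_\caA$.

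\textbf{Comparison.}
\begin{enumerate}
\item The paper's version is shorter. However, its real content is hidden in two facts it leaves implicit. First, a pushout of commutative algebras is a relative tensor product $\caA\otimes_\caC\caB$, and this must agree with the pullback of the right adjoints. Second, cobase change must preserve localizations. Both hold here because symmetric monoidal accessible localizations are idempotent.
\item Your version makes the accessibility (small generation) hypothesis explicit. The paper also needs it, hidden in the word ``presentably.''
\item Your version reduces everything to standard facts: local objects for a set of maps, and preimages of strongly saturated classes under colimit-preserving functors.
\item Your intermediate statement that $\caA\cap\caB\subset\caC$ is a symmetric monoidal localization is exactly what \cref{monoidal_localizations} ultimately uses.
\end{enumerate}
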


\begin{proof}
The lax symmetric monoidal right adjoint of the cobase change $\caA \to \caA \coprod_\caC \caB$ in the $\infty$-category
of presentably symmetric monoidal real $\infty$-categories and left adjoint real symmetric monoidal functors,
is the base change $\caA \times_\caC \caB \subset \caA $
in the $\infty$-category
of presentably symmetric monoidal real $\infty$-categories and right adjoint real lax symmetric monoidal functors.
\end{proof}

\begin{corollary}\label{monoidal_localizations}
The full real subcategory $$\mathrm{rThy} \subset \Fun_{\Spc^{C_2}_\ast}((\Wald_\infty^\gd)^{\aleph_1}, \Spc^{C_2}_\ast)$$ is a symmetric monoidal real accessible localization. In particular, $\mathrm{rThy}$ is a presentably symmetric monoidal real $\infty$-category. 
\end{corollary}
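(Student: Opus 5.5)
We combine \cref{rmk:rThy_aleph1_description}, \cref{prop:subcats_are_monodal_localizations} and \cref{lem:pullback_symetric_loc}. By \cref{rmk:rThy_aleph1_description}, $\mathrm{rThy}$ is the full real subcategory of $\Fun_{\Spc^{C_2}_\ast}((\Wald_\infty^\gd)^{\aleph_1}, \Spc^{C_2}_\ast)$ spanned by the reduced real functors that preserve finite products and cotensors with $C_2$. So it is the intersection of two full subcategories: $\caA$, the functors preserving finite products, and $\caB$, the functors preserving cotensors with the object $C_2$ (more precisely $C_{2,+}$ in the pointed setting).

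First we set up the ambient category. The subcategory $(\Wald_\infty^\gd)^{\aleph_1}$ is essentially small. It is closed under the tensor product of \cref{fhbhnj}, since by \cref{description_tensor_Waldgd} the tensor product of $\aleph_1$-compact objects is a free construction on $\aleph_1$-compact data; it also contains the unit. It is therefore a small real symmetric monoidal $\infty$-category, with real structure from \cite[Corollary 7.18]{hsv}. We then pass to the opposite: covariant real functors on $(\Wald_\infty^\gd)^{\aleph_1}$ are real presheaves on $\caC\coloneqq((\Wald_\infty^\gd)^{\aleph_1})^\op$ with values in $\caV=\Spc^{C_2}_\ast$. Under this identification, preserving finite products becomes sending coproducts in $\caC$ to products, and preserving cotensors with $C_2$ becomes sending tensors $C_2\odot A$ in $\caC$ to cotensors. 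The Day convolution on $\caP_\caV(\caC)$ is presentably real symmetric monoidal.

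Next we apply \cref{prop:subcats_are_monodal_localizations}. Part (1) shows that $\caA$ is a symmetric monoidal accessible localization, and part (2) with $\caK=\{C_2\}$ shows the same for $\caB$. One point needs checking in each case: the tensor product on $\caC$ must distribute over the relevant colimits, i.e.\ $(A_1\amalg\dots\amalg A_n)\otimes X\simeq \coprod_i A_i\otimes X$ and $(C_2\odot A)\otimes X\simeq C_2\odot(A\otimes X)$. In $\Wald_\infty^\gd$ these become products and cotensors, and they hold because the monoidal structure is closed. It is closed by \cref{fhbhnj}, and closedness is real by \cite{hsv}, so $-\otimes X$ is a real left adjoint on the opposite side. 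Put differently, these compatibilities say that the localizing maps are stable under Day convolution with representables, and that is exactly what the proof of \cref{prop:subcats_are_monodal_localizations} uses. \cref{lem:pullback_symetric_loc} then gives that $\caA\times_{\caP_\caV(\caC)}\caB=\caA\cap\caB=\mathrm{rThy}$ is a real symmetric monoidal localization of $\caA$. Composing with the localization $\caA\subset\caP_\caV(\caC)$, and using that symmetric monoidal localizations compose, it is a symmetric monoidal localization of the whole functor category.

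Accessibility holds because $\mathrm{rThy}$ is cut out by locality with respect to a small set of maps, namely the union of the two generating sets. It is therefore presentable, and the localization inherits a presentably real symmetric monoidal structure.

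The main obstacle is the distributivity check in the second step, particularly for cotensors with $C_2$: we must verify that the real closed structure of $\Wald_\infty^\gd$ makes $\otimes X$ compatible with these limits once we pass to the opposite category, and that all of this restricts to $\aleph_1$-compact objects.
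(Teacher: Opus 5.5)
Your route is the paper's: its proof consists of combining \cref{rmk:rThy_aleph1_description}, \cref{prop:subcats_are_monodal_localizations} and \cref{lem:pullback_symetric_loc}, exactly as you do. You are also right that the proof of \cref{prop:subcats_are_monodal_localizations} tacitly needs a compatibility: $-\otimes X$ on $\caC=((\Wald_\infty^\gd)^{\aleph_1})^\op$ must commute with finite coproducts and with tensors by $C_2$ formed in $\caC$. The paper does not address this point.

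However, your justification of that step goes the wrong way. Real closedness makes $-\otimes X$ a real left adjoint on $\Wald_\infty^\gd$. So it commutes with coproducts and with tensors $C_2\odot A$ \emph{formed in} $\Wald_\infty^\gd$. After passing to $\caC$, these are products and cotensors of $\caC$, not the coproducts and tensors you need. Translated back to $\Wald_\infty^\gd$, what you need is $(A_1\times\dots\times A_n)\otimes X\simeq\prod_i(A_i\otimes X)$ and $A^{C_2}\otimes X\simeq(A\otimes X)^{C_2}$. These say that $-\otimes X$ preserves certain \emph{limits}, and closedness alone says nothing about that. So the step you yourself flag as the main obstacle is not settled by the argument you give.

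The missing input is ambidexterity in $\Wald_\infty^\gd$, which has two parts.
\begin{itemize}
\item \emph{Finite products are biproducts.} Given gd-functors $F\colon\caA\to\caE$ and $G\colon\caB\to\caE$, take the functor $(a,b)\mapsto F(a)\oplus G(b)$ together with the orthogonal sum of forms provided by the additivity of the genuine refinement on $\caE$. This exhibits $\caA\times\caB$ as the coproduct as well.
\item \emph{The cotensor $\widetilde{\caD\times\caD}$ with $C_2$ is also the tensor with $C_2$.} Maps out of $C_2\odot\caD$ and maps into $\caD^{C_2}$ both classify plain exact functors, with no compatibility with the dualities. Both universal properties are realized by the hyperbolic object. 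In other words, the left and right adjoints of the forgetful functor $\Wald_\infty^\gd\to\Exact_\infty$ agree, analogously to the hyperbolic construction being both left and right adjoint to the forgetful functor for Poincar\'e categories in \cite{Calmes_etal1}.
\end{itemize}

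Once you identify these limits with the corresponding colimits, compatibly with the canonical comparison maps, left-adjointness of $-\otimes X$ gives the distributivity you need. The rest of your argument then goes through as written: both parts of the proposition, the intersection via the lemma, composing the localizations, and accessibility from the union of two small generating sets.
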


\begin{proof}
This is a direct consequence of \cref{rmk:rThy_aleph1_description} together with \cref{prop:subcats_are_monodal_localizations,lem:pullback_symetric_loc}.
\end{proof}

\begin{construction}
Define $i\colon\widetilde{\caC \times \caC} \times \caC \to S(\caC)_3 $
sending $(A, B, C)$ to the diagram
\begin{equation}\label{diagram1}
\begin{tikzcd}
A \ar[r] & A \oplus C \ar[r] \ar[d] & A \oplus B \oplus C \ar[d] \\
&  C \ar[r] & B \oplus C \ar[d] \\
&& B
\end{tikzcd}
\end{equation}

The duality sends the latter diagram to the diagram
\begin{equation}\label{diagram2}
\begin{tikzcd}
B^\dual \ar[r] & B^\dual \oplus C^\dual \ar[r] \ar[d] & A^\dual \oplus B^\dual \oplus C^\dual \ar[d] \\
&  C^\dual \ar[r] & A^\dual \oplus C^\dual \ar[d] \\
&& A^\dual
\end{tikzcd}
\end{equation}

This is, $i(B^\dual, A^\dual, C^\dual).$ The duality on $\widetilde{\caC \times \caC} \times \caC$ sends 
$(A, B, C) $ to $(B^\dual, A^\dual, C^\dual). $ That is,  $i$ is equivariant.
\end{construction}

\begin{remark}
    Note that  $\gamma_3 \circ i \simeq \id. $
\end{remark}
\begin{construction}\label{constr:theta}
Let $\theta\colon S(\caC)_3 \to S(S(\caC)_3)_3$ be the map in $\Wald_\infty^\gd$
that sends an object $(X \to Y \to Z)$ to
$$ ((X \to X \to X) \to (X \to Y \to Y) \to (X \to Y \to Z)).$$
    
We will show that $\theta$ preserves dualities. Indeed, the dual of  $(X \to Y \to Z)$ in $S(\caC)_3$ is
$$((Z/Y)^\dual \to (Z/X)^\dual \to Z^\dual)$$
which is sent by $\theta$ to
$$ (((Z/Y)^\dual \to (Z/Y)^\dual \to (Z/Y)^\dual) \to ((Z/Y)^\dual \to (Z/X)^\dual \to (Z/X)^\dual) \to ((Z/Y)^\dual \to (Z/X)^\dual \to Z^\dual)).$$

The dual of $$ ((X \to X \to X) \to (X \to Y \to Y) \to (X \to Y \to Z))$$ in $S(S(\caC)_3)_3$ is 
$$ ((0 \to 0 \to Z/Y)^\dual \to (0 \to Y/X \to Z/X)^\dual \to (X \to Y \to Z)^\dual),$$
which is 
$$ (((Z/Y)^\dual \to (Z/Y)^\dual \to (Z/Y)^\dual) \to ((Z/Y)^\dual \to (Z/X)^\dual \to (Z/X)^\dual) \to ((Z/Y)^\dual \to (Z/X)^\dual \to Z^\dual)).$$

Note that $\theta\colon S(\caC)_3 \to S(S(\caC)_3)_3$ is natural in
$\caC \in \Wald_\infty^\gd.$
\end{construction}

\begin{remark}
Note that $S(\caC)_3 \xrightarrow{\theta} S(S(\caC)_3)_3 \xrightarrow{\ev_3} S(\caC)_3 $ is the identity.
\end{remark}

\begin{remark}\label{rmk:Sconstruction_monoidal}
For every $C, D \in \Wald_\infty^\gd$
the canonical map
$$ \Ar([n]) \otimes (C^{\Ar([n])} \otimes D) \simeq (\Ar([n]) \otimes C^{\Ar([n])}) \otimes D \to C \otimes D $$
in $\Wald_\infty^\gd$
corresponds to a map $$ C^{\Ar([n])} \otimes D \to (C \otimes D)^{\Ar([n])} $$
in $\Wald_\infty^\gd$ that restricts to a map 
$$ S_n(C) \otimes D \xrightarrow{\chi} S_n(C \otimes D) $$
in $\Wald_\infty^\gd$.
\end{remark}

\begin{proposition}\label{prop:semiadd_loc_is_monoidal}
The real localization $$ \mathrm{rThy} \longrightarrow \{ \y(\gamma^\caD_3)\mid \caD \in (\Wald_\infty^\gd)^{\aleph_1}\}^{-1} \mathrm{rThy} $$
is symmetric monoidal, where $\y\colon ((\Wald_\infty^\gd)^{\aleph_1})^\op \to \mathrm{rThy} $ denotes the Yoneda embedding.
\end{proposition}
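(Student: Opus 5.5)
The plan follows the pattern of \cref{prop:subcats_are_monodal_localizations} and \cref{prop:localization_is_monoidal}. By \cref{monoidal_localizations}, $\mathrm{rThy}$ is presentably symmetric monoidal real, with tensor product induced by Day convolution, so that $\y(\caC)\otimes\y(\caD)\simeq \y(\caC\otimes\caD)$ (after localizing to $\mathrm{rThy}$). It is also generated under real colimits by representables $\y(\caE)$ with $\caE\in(\Wald_\infty^\gd)^{\aleph_1}$. Hence the localization is symmetric monoidal as soon as the following holds: for every $\caD,\caE\in(\Wald_\infty^\gd)^{\aleph_1}$, the map $\y(\gamma_3^\caD)\otimes \y(\caE)\simeq \y(\gamma_3^\caD\otimes \caE)$ is a local equivalence, i.e.\ it is inverted by every theory that inverts all $\y(\gamma_3^{\caD'})$. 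Note that $\y$ is contravariant, so we must show that for each such theory $\phi$, the map $\phi(\gamma_3^\caD\otimes\caE)\colon \phi(\widetilde{\caD\times\caD}\otimes\caE)\times\phi(\caD\otimes \caE)\to\phi(S(\caD)_3\otimes\caE)$ is an equivalence. Here we use that $\phi$ preserves finite products and the $C_2$-cotensor, so that the target identifies with $\phi$ of $\widetilde{\caD\otimes\caE\times\caD\otimes\caE}\times(\caD\otimes\caE)$.

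The key step is to compare $\gamma_3^\caD\otimes\caE$ with $\gamma_3^{\caD\otimes\caE}$ using the map $\chi\colon S_3(\caD)\otimes\caE\to S_3(\caD\otimes\caE)$ of \cref{rmk:Sconstruction_monoidal}. I would check that $\chi$ forms a commutative square with the canonical maps $(\widetilde{\caD\times\caD}\times\caD)\otimes \caE\to \widetilde{\caD\otimes\caE\times\caD\otimes\caE}\times(\caD\otimes\caE)$, with $\gamma_3^\caD\otimes\caE$ on one side and $\gamma_3^{\caD\otimes\caE}$ on the other. This is a naturality check: $\gamma_3$ is built from the face maps $\{0\to1\},\{1\to2\}\subset[3]$, which are compatible with $\chi$. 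The same compatibility should hold for the section $i$, so $\gamma_3\circ i\simeq\id$ holds on both sides. After applying $\phi$, the bottom map $\phi(\gamma_3^{\caD\otimes\caE})$ is an equivalence by hypothesis. The comparison map on the sum side becomes an equivalence after applying $\phi$, since $\phi$ preserves products and cotensors with $C_2$.

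It then remains to show that $\phi(\gamma_3^\caD\otimes\caE)$ is an equivalence. Since $\gamma_3\circ i\simeq\id$, the map $\phi(i\otimes\caE)$ is a retraction of $\phi(\gamma_3\otimes\caE)$ up to the variance. So it suffices to show that $\phi((i\circ\gamma_3)\otimes\caE)\simeq\id$. For this I would use $\theta$ from \cref{constr:theta}: $\theta$ together with the evaluations $S(S(\caC)_3)_3\to S(\caC)_3$ gives a natural zigzag. On one side, $\ev_3\circ\theta=\id$. On the other side, the composite obtained by applying $\gamma_3^{S(\caC)_3}$ and then recombining is $i\circ\gamma_3$. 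Because $\phi$ inverts $\gamma_3^{S(\caD)_3\otimes\caE}$ (again via $\chi$), the additivity relation in $\phi$ forces $\phi(\ev_3)$ to agree with the sum of the remaining face contributions. Applied to $\theta\otimes\caE$, this identifies $\phi((i\circ\gamma_3)\otimes\caE)$ with $\phi(\id)$. The naturality of $\theta$ in $\caC$ is what allows it to be tensored with $\caE$ and used through $\chi$.

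I expect the main obstacle to be this last step: making the $\theta$-argument precise and equivariant. One must check that the homotopy relating $i\gamma_3$ and $\id$ after applying $\phi$ lives in $\Wald_\infty^\gd$, respecting the genuine duality, and is natural enough to survive $-\otimes\caE$. One must also check that $\chi$ respects the genuine refinements, i.e.\ the free right fibration description of \cref{description_tensor_Waldgd}. I would first try to verify everything on the underlying $\Wald^\d_\infty$ level, where it is a computation with diagrams \eqref{diagram1} and \eqref{diagram2}. I would then use that $\Wald_\infty^\gd\to\Wald_\infty^\d$ is a cocartesian fibration of symmetric monoidal $\infty$-categories (\cref{fhbhnj}) to lift the statements.
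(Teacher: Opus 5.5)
Your proposal is correct and follows essentially the same route as the paper. You split $\gamma_3^{\caD}\otimes\caE$ by $i^{\caD}\otimes\caE$, and you show that $(i^{\caD}\gamma_3^{\caD})\otimes\caE$ becomes the identity via the composite $\ev_3\circ(i\gamma_3)^{S(\caD)_3\otimes\caE}\circ\chi\circ(\theta\otimes\caE)$. This composite is inverted to the identity because $\gamma_3^{S(\caD)_3\otimes\caE}$ is inverted and $\ev_3\circ\theta=\mathrm{id}$, and it agrees with $(i^{\caD}\gamma_3^{\caD})\otimes\caE$ by the $\chi$-compatibilities together with $\ev_3\circ(i\gamma_3)^{S(\caD)_3}\circ\theta\simeq i^{\caD}\gamma_3^{\caD}$.

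Two minor points. Since theories are covariant, $\phi(\gamma_3^{\caD}\otimes\caE)$ goes out of $\phi(S(\caD)_3\otimes\caE)$, not into it. Also, only the compatibility of $\chi$ with $i$, $\gamma_3$ and $\ev_3$ is actually used; your claim that $\phi$ inverts the comparison map on the sum side is never needed.
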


\begin{proof}
We have to see that for every $\caC, \caD \in (\Wald_\infty^\gd)^{\aleph_1}$ the morphism
$$ \gamma^\caC_3 \otimes \caD \colon S(\caC)_3 \otimes D \to (\widetilde{\caC \times \caC} \times \caC) \otimes \caD $$ is a local equivalence, i.e.\ is inverted by the localization.
Since $$\gamma^\caC_3 \circ i^\caC\colon \widetilde{\caC \times \caC} \times \caC \to \widetilde{\caC \times \caC} \times \caC $$ is the identity, the composition below is the identity too $$ (\widetilde{\caC \times \caC} \times \caC) \otimes \caD \xrightarrow{i^{\caC} \otimes \caD} S(\caC)_3 \otimes \caD \xrightarrow{\gamma_3^{\caC} \otimes \caD} (\widetilde{\caC \times \caC} \times \caC) \otimes \caD.  $$

Hence it suffices to show that the other composition $$ S(\caC)_3 \otimes \caD \xrightarrow{\gamma_3^{\caC} \otimes \caD} (\widetilde{\caC \times \caC} \times \caC) \otimes \caD \xrightarrow{i^{\caC} \otimes \caD} S(\caC)_3 \otimes \caD $$
becomes the identity under the localization functor.
We prove this in the following.

The composition 
$$ \gamma_3^{S(\caC)_3 \otimes D} \circ i^{S(\caC)_3 \otimes \caD} $$
is the identity. Since $\gamma_3^{S(\caC)_3 \otimes \caD}$ becomes an equivalence under the localization functor, the composition 
$$ i^{S(\caC)_3 \otimes \caD} \circ \gamma_3^{S(\caC)_3 \otimes \caD}\colon S(S(\caC)_3 \otimes \caD)_3 \xrightarrow{} S(S(\caC)_3 \otimes \caD)_3  $$
becomes the identity under the localization functor.

Hence using $\theta$ from \cref{constr:theta} and $\chi$ from \cref{rmk:Sconstruction_monoidal}, we consider the composite
$$ \xi\colon S(\caC)_3 \otimes \caD \xrightarrow{\theta \otimes \caD} S(S(\caC)_3)_3 \otimes \caD \xrightarrow{\chi} S(S(\caC)_3 \otimes \caD)_3 $$$$ \xrightarrow{i^{S(\caC)_3 \otimes \caD} \circ \gamma_3^{S(\caC)_3 \otimes \caD}} S(S(\caC)_3 \otimes \caD)_3 \xrightarrow{ev_3} S(\caC)_3 \otimes \caD $$
which becomes the canonical map $$ S(\caC)_3 \otimes \caD \xrightarrow{\theta \otimes \caD} S(S(\caC)_3)_3 \otimes \caD \to S(S(\caC)_3 \otimes \caD)_3 \xrightarrow{ev_3} S(\caC)_3 \otimes \caD$$
under the localization functor. The latter map, in turn, factors as $$ S(\caC)_3 \otimes \caD \xrightarrow{\theta \otimes \caD} S(S(\caC)_3)_3 \otimes \caD \xrightarrow{ev_3 \otimes \caD} S(\caC)_3 \otimes \caD, $$ which is the identity.
So $\xi$ becomes the identity under the localization functor.

We identify $\xi$ with the map
$$ S(\caC)_3 \otimes \caD \xrightarrow{\gamma_3^{\caC} \otimes \caD} (\widetilde{\caC \times \caC} \times \caC) \otimes \caD \xrightarrow{i^{\caC} \otimes \caD} S(\caC)_3 \otimes \caD.$$

The map $\xi$ factors as 
$$ S(\caC)_3 \otimes \caD \xrightarrow{\theta \otimes \caD} S(S(\caC)_3)_3 \otimes \caD \xrightarrow{(i^{S(\caC)_3} \circ \gamma_3^{S(\caC)_3}) \otimes \caD} S(S(\caC)_3)_3 \otimes \caD \to S(S(\caC)_3 \otimes \caD)_3 $$$$ \xrightarrow{ev_3} S(\caC)_3 \otimes \caD. $$
The latter map identifies with the map
$$ S(\caC)_3 \otimes \caD \to S(S(\caC)_3)_3 \otimes \caD \xrightarrow{(i^{S(\caC)_3} \circ \gamma_3^{S(\caC)_3}) \otimes \caD} S(S(\caC)_3)_3 \otimes \caD \xrightarrow{ev_3 \otimes \caD} S(\caC)_3 \otimes \caD, $$
which is the tensor product of $\caD$ with the map
$$ S(\caC)_3 \xrightarrow{\theta} S(S(\caC)_3)_3 \xrightarrow{i^{S(\caC)_3} \circ \gamma_3^{S(\caC)_3}} S(S(\caC)_3)_3 \xrightarrow{ev_3} S(\caC)_3.$$
The latter map identifies with the map
$$ S(\caC)_3 \xrightarrow{i^{\caC} \circ \gamma_3^{\caC}} S(\caC)_3 \xrightarrow{\theta} S(S(\caC)_3)_3 \xrightarrow{ev_3} S(\caC)_3,$$
which is the map
$$ i^{\caC} \circ \gamma_3^{\caC}\colon S(\caC)_3 \xrightarrow{} S(\caC)_3.$$

Thus, $\xi$ identifies with the map 
$$ S(\caC)_3 \otimes \caD \xrightarrow{\gamma_3^{\caC} \otimes \caD} (\widetilde{\caC \times \caC} \times \caC) \otimes \caD \xrightarrow{i^{\caC} \otimes \caD} S(\caC)_3 \otimes \caD.$$
\end{proof}

\begin{theorem}
The full subcategory 
$\Add \subset \mathrm{rThy}$ 
is a symmetric monoidal localization.
\end{theorem}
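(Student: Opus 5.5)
We write $\Add$ as an intersection of two symmetric monoidal accessible localizations of $\mathrm{rThy}$ and then apply \cref{lem:pullback_symetric_loc}. By \cref{def:pre_semi_additive}, a real theory is additive exactly when two conditions hold: it inverts the maps $\gamma_3^\caD$, and it is genuine excisive. So there is a pullback of full real subcategories
$$\Add \simeq \caA \times_{\mathrm{rThy}} \caB,$$
where $\caA = \{\y(\gamma^\caD_3)\}^{-1}\mathrm{rThy}$ and $\caB \subset \mathrm{rThy}$ is the full subcategory of genuine excisive real theories. By \cref{monoidal_localizations}, $\mathrm{rThy}$ is presentably symmetric monoidal. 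By \cref{prop:semiadd_loc_is_monoidal}, $\caA$ is a symmetric monoidal localization of it. Hence it remains to show that $\caB \subset \mathrm{rThy}$ is a symmetric monoidal accessible localization, and then \cref{lem:pullback_symetric_loc} gives that $\Add = \caA\times_{\mathrm{rThy}}\caB$ is one too.

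For $\caB$, the condition can be checked on the $\aleph_1$-compact objects using \cref{rmk:rThy_aleph1_description}. For every $\caC \in (\Wald^\gd_\infty)^{\aleph_1}$ we ask that $F(\caC)\to \Omega^{1,1}F(\Sigma^{1,1}\caC)$ be an equivalence. Via the enriched Yoneda lemma, this is locality with respect to the maps
$$\epsilon_{\caC,Z}\colon \mathrm{lan}_{\Sigma^{1,1}\caC}(S^{1,1}\wedge Z) \to \mathrm{lan}_{\caC}(Z), \qquad Z\in\Spc^{C_2}_*.$$
This is the same argument as in the proof of the characterization of $\Sp^{C_2}$ as local objects, with $S^{2,1}$ replaced by $S^{1,1}$. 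Since $\mathrm{rThy}$ is presentable, these maps form a set of generators up to the localization $\mathrm{rThy}$ itself, so $\caB$ is an accessible localization of $\mathrm{rThy}$. Here we first localize the functor category and then intersect with $\mathrm{rThy}$, again via \cref{lem:pullback_symetric_loc}.

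For monoidality, we argue as in \cref{prop:localization_is_monoidal}: it suffices that $\epsilon_{\caC,Z}\otimes \mathrm{lan}_{\caD}(Z')$ be a local equivalence. By Day convolution this map is
$$\mathrm{lan}_{\Sigma^{1,1}\caC\otimes\caD}(S^{1,1}\wedge Z\wedge Z')\to \mathrm{lan}_{\caC\otimes\caD}(Z\wedge Z').$$
We therefore need a natural equivalence $(\Sigma^{1,1}\caC)\otimes\caD\simeq \Sigma^{1,1}(\caC\otimes\caD)$ in $\Wald^\gd_\infty$. This should hold because $\Sigma^{1,1}$ is the tensoring with $S^{1,1}$ of the real structure, and the real structure on $\Wald^\gd_\infty$ comes from its closed symmetric monoidal structure (\cref{fhbhnj}, \cite[Corollary 7.18]{hsv}). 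The tensoring $v\odot-$ is then $\underline{v}\otimes -$ for an object $\underline v$, so it commutes with $-\otimes\caD$ by associativity. With this, the displayed map is $\epsilon_{\caC\otimes\caD,\,Z\wedge Z'}$, which is a generating local equivalence.

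The main obstacle is making this compatibility precise. We must verify that tensoring with $S^{1,1}$ in $\Wald^\gd_\infty$ (and cotensoring with $C_2$, already used for $\mathrm{rThy}$) is given by tensoring with a fixed object in the closed symmetric monoidal structure of \cref{fhbhnj}. We also need the tensor product of two $\aleph_1$-compact objects to stay $\aleph_1$-compact, so that everything happens inside $(\Wald_\infty^\gd)^{\aleph_1}$. A second, smaller point is that the generating maps are taken in $\mathrm{rThy}$, which is not the whole functor category. This is handled by applying the localization to $\mathrm{rThy}$ and using that the localization onto $\mathrm{rThy}$ is symmetric monoidal (\cref{monoidal_localizations}).
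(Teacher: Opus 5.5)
Your proposal is correct and follows the paper's proof. Like the paper, you write $\Add$ as the intersection of the $\gamma_3$-local localization of \cref{prop:semiadd_loc_is_monoidal} with the excisive localization argued as in \cref{prop:localization_is_monoidal}, and combine them via \cref{lem:pullback_symetric_loc}; you are in fact more careful than the paper, which only cites the $S^{2,1}$-version, in running the excisive step with $S^{1,1}$ and making explicit the compatibility $(\Sigma^{1,1}\caC)\otimes\caD\simeq\Sigma^{1,1}(\caC\otimes\caD)$ that the paper uses implicitly.
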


\begin{proof}
This follows from applying \cref{lem:pullback_symetric_loc} to the localizations of \cref{prop:localization_is_monoidal} and \cref{prop:semiadd_loc_is_monoidal}.
\end{proof}

\begin{theorem} The real functor $\KR\colon \Wald_\infty^\gd\to \Spc^{C_2}$ refines to the initial $\mathbb{E}_\infty$-algebra in $\Add$. In other words, the real functor $\KR\colon\Wald_\infty^\gd\to \Spc^{C_2}$ refines to a lax symmetric monoidal real functor, which is initial among 
additive lax symmetric monoidal real functors $\Wald^\gd_\infty\to \Spc^{C_2}$.
\end{theorem}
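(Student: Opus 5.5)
The argument is formal once the preceding theorem is in hand. It rests on one general principle: if $L\colon \caC \to \caA$ is a symmetric monoidal localization of a presentably symmetric monoidal $\infty$-category, then $L(\mathbb{1}_\caC)$ is the tensor unit of $\caA$. The tensor unit of any symmetric monoidal $\infty$-category is the initial object of $\Calg$.

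I would apply this twice. The first application is to the Day convolution structure on $\Fun_{\Spc^{C_2}_\ast}((\Wald_\infty^\gd)^{\aleph_1},\Spc^{C_2}_\ast)$, whose unit is the corepresentable functor $\map(\mathbb{1},-)$ on the tensor unit $\mathbb{1}$ of $\Wald^\gd_\infty$. The second is to the localizations of \cref{monoidal_localizations} and the previous theorem, with resulting unit $\add(L_{\mathrm{rThy}}\map(\mathbb{1},-))$.

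The key steps are as follows.

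\begin{enumerate}
\item I first identify the unit of $\mathrm{rThy}$ with $\iota$. By \cref{description_tensor_Waldgd} the unit of $\Wald_\infty^\gd$ is the free Waldhausen $\infty$-category with genuine duality on one generator, namely the image of the unit under the symmetric monoidal free functor $\Cat_\infty^\gd\to\Wald_\infty^\gd$ of \cref{fhbhnj}. By adjunction, the real mapping $C_2$-space $\map_{\Wald^\gd_\infty}(\mathbb{1},\caC)$ is the real mapping space out of the point in $\Cat^\gd_\infty$, which is the genuine $C_2$-space of objects of $\caC$. That space is exactly $\iota(\caC)$.
\item I check that $\iota$ already preserves $\aleph_1$-filtered colimits, finite products and cotensors with $C_2$. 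This holds because the unit is compact and these operations are computed in the underlying $\infty$-categories with genuine duality. It follows that the localization to $\mathrm{rThy}$ fixes $\iota$, so the unit of $\mathrm{rThy}$ is $\iota$.
\item Since $\Add\subset\mathrm{rThy}$ is a symmetric monoidal localization with left adjoint $\add$ (\cref{add_localization}), $\add$ is symmetric monoidal. The unit of $\Add$ is therefore $\add(\iota)=\KR$ (\cref{def:KR_space}).
\item The initial $\mathbb{E}_\infty$-algebra of a symmetric monoidal $\infty$-category is its unit. This shows that $\KR$ refines to the initial object of $\Calg(\Add)$.
\end{enumerate}

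To obtain the reformulation in the statement, I identify $\Calg$ of a Day-convolution category with lax symmetric monoidal functors. This uses Glasman/Lurie in the enriched form of \cite{hsv}. Because $\Add$ is a full symmetric monoidal localization, $\Calg(\Add)$ is the full subcategory of $\Calg(\Fun_{\Spc^{C_2}_\ast}((\Wald_\infty^\gd)^{\aleph_1},\Spc^{C_2}_\ast))$ spanned by algebras whose underlying object lies in $\Add$. These are precisely the additive real theories equipped with a lax symmetric monoidal structure. By \cref{rmk:rThy_aleph1_description}, such functors are the same as additive lax symmetric monoidal real functors on all of $\Wald^\gd_\infty$ that preserve $\aleph_1$-filtered colimits.

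The main obstacle is step 1. It requires the corepresentable functor on the unit to be the unit of the enriched Day convolution, which holds because the unit of Day convolution is the Yoneda image of the unit. It also requires matching $\map(\mathbb{1},-)$ with $\iota$ as real functors, including the genuine refinement. For this I would use the free-forgetful adjunction $\Cat_\infty^\gd\rightleftarrows\Wald_\infty^\gd$ together with the fact that, in $\Cat_\infty^\gd$, the unit corepresents the genuine space of objects. That fact I would check against the definition in \cite{hsv}. A secondary subtlety is that the unit must lie in $(\Wald_\infty^\gd)^{\aleph_1}$, which holds since it is compact.
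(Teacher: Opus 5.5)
Your proposal is correct and is essentially the paper's proof: $\Add\subset\mathrm{rThy}$ is a symmetric monoidal localization, so $\add$ sends the unit $\iota$ of $\mathrm{rThy}$ to the unit $\add(\iota)=\KR$ of $\Add$, and the unit is the initial $\mathbb{E}_\infty$-algebra. Your steps 1--2 (identifying the unit of $\mathrm{rThy}$ with $\iota$ via corepresentability by the unit of $\Wald_\infty^\gd$) and your translation to lax symmetric monoidal functors fill in details the paper only asserts; note that there you need the unit to be $\aleph_1$-compact, not compact.
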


\begin{proof}
For this, it is enough to show that there is an induced monoidal structure on $\Add $ whose tensor unit is $\KR$. Since $\Add \subset \mathrm{rThy}$ 
is a symmetric monoidal localization, there is an induced presentably symmetric monoidal structure on $\Add$ such that the localization functor
$$ \mathrm{rThy}\to \Add $$
is symmetric monoidal.
This symmetric monoidal localization functor sends the tensor unit of $\mathrm{rThy}$, which is the real functor $\iota\colon \Wald^\gd_\infty\to \Spc^{C_2}_\ast$ that sends each $\caC$ in $\Wald^\gd_\infty$ to the genuine $C_2$-spacce of objects,  to the tensor unit in $\Add$. This concludes the proof, since $\KR$ was defined exactly as the image of $\iota$ under this functor.
\end{proof}

Furthermore, we can upgrade the previous result to functors to genuine $C_2$-spectra.

\begin{theorem}[Universal property]\label{thm:univ_prop_Sp} The real functor $\KR\colon \Wald_\infty^\gd\to \Sp^{C_2}$ refines to the initial $\mathbb{E}_\infty$-algebra in $\Add_{\Sp^{C_2}}$. In other words, the real functor $\KR\colon\Wald_\infty^\gd\to \Sp^{C_2}$ refines to a lax symmetric monoidal real functor, which is initial among 
additive lax symmetric monoidal real functors $\Wald^\gd_\infty\to \Sp^{C_2}$.
\end{theorem}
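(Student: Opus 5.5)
The plan is to transport the previous theorem (the $\Spc^{C_2}_\ast$-valued version) along a symmetric monoidal equivalence $\Add_{\Sp^{C_2}} \simeq \Add$ induced by $\Omega^\infty$. Once that equivalence is in place, the initial $\mathbb{E}_\infty$-algebra of $\Add$, which is $\KR$ by the previous theorem, corresponds to the initial $\mathbb{E}_\infty$-algebra of $\Add_{\Sp^{C_2}}$. Commutative algebras in a functor category with Day convolution are exactly lax symmetric monoidal functors, so this initial algebra is the asserted lax symmetric monoidal refinement of $\KR\colon \Wald^\gd_\infty \to \Sp^{C_2}$. Its underlying functor agrees with the lift to $\Sp^{C_2}_{\geq 0}$ recalled from \cite[Corollary 10.36]{hsv}, by uniqueness of lifts.

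First I will equip $\mathrm{rThy}_{\Sp^{C_2}}$, viewed via \cref{rmk:rThy_aleph1_description} inside $\Fun_{\Spc^{C_2}_\ast}((\Wald^\gd_\infty)^{\aleph_1},\Sp^{C_2})$, with Day convolution. Here $\Sp^{C_2}$ carries the presentably symmetric monoidal structure of \cref{cor:localization_is_monoidal}. Next I will check that $\Add_{\Sp^{C_2}}$ is a symmetric monoidal localization of it by repeating the arguments for $\Spc^{C_2}_\ast$. Product preservation and cotensors with $C_2$ are handled by \cref{prop:subcats_are_monodal_localizations}, now with coefficients in $\Sp^{C_2}$: the generating local equivalences are tensors of $\Sp^{C_2}$-objects with representables, and they are stable under $-\otimes \y(X)$ by the same formula. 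Inverting $\y(\gamma_3)$ is handled by \cref{prop:semiadd_loc_is_monoidal}, whose retraction argument never uses the target. Genuine excisiveness is handled as in \cref{prop:localization_is_monoidal}. These are combined with \cref{lem:pullback_symetric_loc}.

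The key step is \cref{genspec}, applied with $\caC = (\Wald^\gd_\infty)^{\aleph_1}$, which is small and real symmetric monoidal and admits tensors with $S^{2,1}$ since $\Wald^\gd_\infty$ is real and cocomplete. It gives a symmetric monoidal equivalence
$$\Omega^\infty\circ - \colon \Exc_{S^{2,1}}(\caC,\Sp^{C_2}) \xrightarrow{\ \sim\ } \Exc_{S^{2,1}}(\caC,\Spc^{C_2}_\ast).$$
I then need two things. First, additive theories in either target are automatically $S^{2,1}$-excisive. Since $S^{2,1} = S^{1}\wedge S^{1,1}$, this should follow from genuine excisiveness in \cref{def:pre_semi_additive} together with ordinary excision. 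Ordinary excision comes from inverting $\gamma_3$ via the usual $S_\bullet$/suspension argument, as in \cite[Section 9--10]{hsv}. Second, the conditions defining $\Add$ (products, cotensors with $C_2$, inverting $\gamma_3$, genuine excisiveness) are detected after applying $\Omega^\infty$. This holds because $\Omega^\infty$ is a conservative real right adjoint on excisive objects, so it preserves and reflects the relevant limits and cotensors. The equivalence therefore restricts to a symmetric monoidal equivalence $\Add_{\Sp^{C_2}}\simeq\Add$, where the monoidal structures on both sides are the localized Day convolutions. They match because the equivalence of \cref{genspec} was produced from the left adjoint $\Sigma^\infty_{S^{2,1}}$, which is symmetric monoidal and commutes with the localizations.

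I expect the main obstacle to be this last compatibility: that the symmetric monoidal structure which \cref{genspec} places on $\Exc_{S^{2,1}}$ agrees with the one induced on $\Add$ and $\Add_{\Sp^{C_2}}$ as localizations of Day convolution. My first approach would be to check it on units and on the left adjoint. Postcomposition with the symmetric monoidal $\Sigma^\infty_{S^{2,1}}$ followed by $\add$ is symmetric monoidal on Day convolution, it is left adjoint to $\Omega^\infty\circ-$, and an equivalence whose left adjoint is symmetric monoidal is itself symmetric monoidal. Hence it carries the unit $\KR$ to the unit, which is the initial $\mathbb{E}_\infty$-algebra.
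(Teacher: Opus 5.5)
Your proposal follows the paper's proof, which consists only of the observation that \cref{genspec} yields a symmetric monoidal equivalence $\Add_{\Sp^{C_2}}\simeq\Add$, across which the initial $\mathbb{E}_\infty$-algebra $\KR$ of the preceding theorem is transported. The checks you list are left implicit there: that additive theories are $S^{2,1}$-excisive, that the defining conditions correspond under $\Omega^\infty$, and that the localized Day convolutions agree. If you write those checks out, two of your supporting remarks need sharpening. First, $\Omega^\infty\colon \Sp^{C_2}\to\Spc^{C_2}_\ast$ is not conservative, so showing that an excisive $F$ with values in $\Sp^{C_2}$ inverts $\gamma_3$ requires $\Omega^\infty F$ to invert the $S^{2,1}$-suspensions of $\gamma_3$ as well. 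Second, inverting $\gamma_3$ gives only additivity relations, not group-completeness, so ordinary $S^1$-excision of space-valued additive theories must come from a separate input rather than from $\gamma_3$.
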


\begin{proof}
\cref{genspec} implies that there is a symmetric monoidal equivalence
$$ \Add_{\Sp^{C_2}} \simeq \Add.$$
\end{proof}

\begin{remark} 
Since the real functor $\KR$ is initial, one can produce a unique symmetric monoidal trace to any lax symmetric monoidal real functor $\D\Wald^\gd_\infty\to \Sp^{C_2}$.  This opens the opportunity to produce real multiplicative trace maps, which concerns future work of the authors.
\end{remark}

We will finish this work by noting that real $K$-theory preserves $\mathbb{E}_\infty$-algebras. For this, we introduce some notation, we denote by $N\colon\Sp \to \Sp^{C_2}$ the Hill-Hopkins-Ravenel norm, which is symmetric monoidal, and by $\nu\colon\Sp^{C_2} \to \Sp$ the forgetful functor, which is symmetric monoidal, too.
\begin{remark}
The induced functor $\Alg_{\mathbb{E}_\infty}(\Sp) \to \Alg_{\mathbb{E}_\infty}(\Sp^{C_2})$ by $N$
is left adjoint to the functor $\Alg_{\mathbb{E}_\infty}(\Sp^{C_2}) \to \Alg_{\mathbb{E}_\infty}(\Sp)$ induced by $\nu.$
\end{remark}

\begin{proposition}
Let $A$ be an $\mathbb{E}_\infty$-genuine $C_2$-spectrum, and $N(\nu(A)) \to A$
a map in $\Alg_{\mathbb{E}_\infty}(\Sp^{C_2})$ which induces on underlying  $\mathbb{E}_\infty$-ring spectra the codiagonal map.
The $\infty$-category $\Mod_{\nu(A)}(\Sp)^\perf$ of dualizable $A$-module spectra refines canonically to a small stable symmetric monoidal $\infty$-category with genuine duality. 
\end{proposition}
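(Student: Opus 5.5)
The plan is to build the genuine duality in three stages: first the underlying duality, then a genuine refinement coming from the $C_2$-structure of $A$, and finally the symmetric monoidal structure.

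\emph{Underlying duality.} Write $R=\nu(A)$. Since $R$ is an $\mathbb{E}_\infty$-ring, $\Mod_R(\Sp)^\perf$ is a small stable symmetric monoidal $\infty$-category under $\otimes_R$, and every object is dualizable. The internal dual $M\mapsto M^\dual=\map_R(M,R)$ is therefore a symmetric monoidal equivalence $(\Mod_R(\Sp)^\perf)^\op\simeq\Mod_R(\Sp)^\perf$ whose square is the identity. This makes $\Mod_R(\Sp)^\perf$ an object of $\Calg(\Cat_\infty^{hC_2})$. It lies in $\Calg(\Wald^\d_\infty)$ when we take as cofibrations and fibrations all maps, i.e.\ the stable exact structure.

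\emph{Genuine refinement.} By \cref{rmk:Waldgd_subcat_pullback}, a refinement amounts to a right fibration $H\to\caH(\Mod_R(\Sp)^\perf)$, i.e.\ a presheaf on hermitian objects. Hermitian objects are pairs $(M, M\otimes_R M\to R)$ with $C_2$-invariant form, equivalently maps $(M\otimes_R M)_{hC_2}\to R$. I would first use the norm map $N(R)\to A$ to make $A$ an object with a $C_2$-action on underlying $R$ through the swap. The genuine symmetric refinement then assigns to $(M,b)$ the space of lifts of the form to a map of genuine $C_2$-spectra $N_R(M)\to A$. Here $N_R(M)=N(M)\otimes_{N(R)}A$ is the relative norm, and its underlying form is $b$. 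This is a presheaf on $\caH$ because $N_R$ is functorial in $M$. It satisfies condition (2) of \cref{Waldgd} because the geometric-fixed-point part of the mapping space is exact in $M$: the Tate/geometric fixed points of $N(M)$ are $M\otimes\Phi$, which is linear in $M$. Condition (1), additivity, follows from the same linearity together with the standard decomposition of maps out of $N(M\oplus M')$.

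\emph{Symmetric monoidal structure.} By \cref{description_tensor_Waldgd}, we need a multiplication $H(M)\times H(M')\to H(M\otimes_R M')$ with the usual coherences. This comes from the symmetric monoidality of the norm, $N_R(M\otimes_R M')\simeq N_R(M)\otimes_A N_R(M')$, combined with the multiplication $A\otimes_A A\to A$ of the $\mathbb{E}_\infty$-algebra $A$. The unit is $R$ with the canonical form $N_R(R)=A\to A$.

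To organize all coherences at once, I would define everything as a lax symmetric monoidal functor $\caH(\Mod_R^\perf)^\op\to\Spc$. This is possible since $\caH$ is symmetric monoidal on $\Calg(\Cat^{hC_2}_\infty)$ and the mapping space into an $\mathbb{E}_\infty$-algebra is lax monoidal. Straightening then yields a commutative algebra in $\Wald^\gd_\infty$ lying over the given one in $\Wald^\d_\infty$, via the cocartesian fibration of \cref{fhbhnj}.

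\emph{Main obstacle.} I expect the hardest step to be constructing $N_R$ as a functor coherently with respect to the duality. It must be a functor on $\caH$, not only on $\Mod_R$, and it must be compatible with the given identification of the underlying form. I would try to obtain this from the symmetric monoidal adjunction $N\dashv\nu$ on $\mathbb{E}_\infty$-algebras together with the universal property of the HHR norm as the symmetric monoidal left adjoint of the restriction $\Sp^{C_2}\to\Sp^{BC_2}$.
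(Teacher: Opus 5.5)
Your proposal is correct and is the paper's construction in different packaging. The paper composes the (lax) symmetric monoidal functors $\Mod_R\to\Mod_{N(R)}(\Sp^{C_2})\to\Mod_A(\Sp^{C_2})\to\Sp^{C_2}$ with $X\mapsto (X^{C_2}\to X^{hC_2})$, then precomposes with the symmetric monoidal duality, giving the lax symmetric monoidal quadratic functor $M\mapsto (M^\dual\otimes_A M^\dual)^{C_2}$ over the symmetric forms; for perfect $M$ its $\Omega^\infty$ is exactly your $\map_{\Mod_A(\Sp^{C_2})}(N_R(M),A)$ lying over the space of forms. You additionally sketch conditions (1)--(2) and the passage to $\Calg(\Wald^\gd_\infty)$, which the paper leaves implicit.

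Your ``main obstacle'' is not really one. You only need a lax symmetric monoidal natural transformation of functors on $(\Mod_R^\perf)^\op$, which $(-)^{C_2}\to(-)^{hC_2}$ supplies; you do not need a norm defined on $\caH$. You should also drop the description of the norm as a left adjoint of $\Sp^{C_2}\to\Sp^{BC_2}$, which is incorrect: the norm $\Sp\to\Sp^{C_2}$ is not even exact.
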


\begin{proof}
We know that $\Mod_{\nu(A)}(\Sp)^\perf$ is a small stable $\infty$-category since $\Sp$ is so, and that it inherits a symmetric monoidal structure from the tensor product in $\Sp$\textemdash the relative tensor product. 

It remains to show that $\Mod_{\nu(A)}(\Sp)^\perf$ admits a genuine duality which is compatible with the symmetric monoidal structure. To construct this, we recall that by \cite[Definition A.52 and Proposition A.53]{HHR_Kervaire} there is a norm $N\colon \Sp \to \Sp^{C_2}$,
which is symmetric monoidal. Hence for every $A \in \Alg_{\mathbb{E}_\infty}(\Sp)$ the induced functor
\begin{equation}\label{map_induced_in_Mod}
\Mod_{\nu(A)}(\Sp) \to \Mod_{N(A)}(\Sp^{C_2}) 
\end{equation} is symmetric monoidal.

Moreover, the functor
$N\colon\Alg_{\mathbb{E}_\infty}(\Sp) \to \Alg_{\mathbb{E}_\infty}(\Sp^{C_2})$ induced by the norm is left adjoint to the functor $\nu\colon\Alg_{\mathbb{E}_\infty}(\Sp^{C_2}) \to \Alg_{\mathbb{E}_\infty}(\Sp)$ induced by restriction $\nu\colon \Sp^{C_2} \to \Sp.$ Besides this adjunction, there is an adjunction $\mathrm{triv}\colon \Sp \rightleftarrows \Sp^{C_2} \colon(-)^{C_2}$ that induces an adjunction
$\mathrm{triv}\colon \Alg_{\mathbb{E}_\infty}(\Sp) \rightleftarrows \Alg_{\mathbb{E}_\infty}(\Sp^{C_2}) \colon(-)^{C_2}$, denoted by the same names.

Since $A$ is a normed spectrum, there is a multiplication $N(\nu(A)) \to A$, which is a map in $\Alg_{\mathbb{E}_\infty}(\Sp^{C_2})$. Thus scalar extension along this map $ N(\nu(A)) \to A $
is a symmetric monoidal functor
$$ \Mod_{N(\nu(A))}(\Sp^{C_2}) \to \Mod_{A}(\Sp^{C_2}).$$

Precomposing with the map in (\ref{map_induced_in_Mod}), we obtain a symmetric monoidal functor
$$ \Mod_{\nu(A)}(\Sp) \to \Mod_{N(\nu(A))}(\Sp^{C_2}) \to \Mod_{A}(\Sp^{C_2})$$
that sends $M\mapsto N(M) \otimes_{N(\nu(A))} A.$
By further composing with the forgetful functor and the functor 
$\Sp^{C_2} \to \Fun([1],\Sp), X \mapsto X^{C_2} \to X^{hC_2}$, we obtain a lax symmetric monoidal functor
$$  \Mod_{\nu(A)}(\Sp) \to \Mod_{N(\nu(A))}(\Sp^{C_2}) \to \Mod_{A}(\Sp^{C_2}) \to \Sp^{C_2} \to \Sp$$
that sends $M\mapsto (M \otimes M \otimes_{A \otimes A} A)^{C_2} \simeq (M \otimes_A M)^{C_2}.$

The symmetric monoidal duality gives a symmetric monoidal equivalence
$$ (\Mod_{\nu(A)}(\Sp)^\perf)^\op  \simeq \Mod_{\nu(A)}(\Sp)^\perf.$$
Composing the latter lax symmetric monoidal functor
$\Mod_{\nu(A)}(\Sp) \to \Sp$ with the symmetric monoidal equivalence
$(\Mod_{\nu(A)}(\Sp)^\perf)^\op  \simeq \Mod_{\nu(A)}(\Sp)^\perf \subset \Mod_{\nu(A)}(\Sp)$
we obtain a lax symmetric monoidal quadratic functor
$$ (\Mod_{\nu(A)}(\Sp)^\perf)^\op \to \Sp$$
that sends $$M\mapsto (M^\dual \otimes_A M^\dual)^{C_2}.$$
This completes the proof.
\end{proof}

\begin{corollary}\label{cor:normed_Einfinty_ringspectrum}
Let $A$ be a normed $\mathbb{E}_\infty$-ring spectrum.
Then  $\KR(A) \coloneqq\KR(\Mod_A(\Sp)^\perf)$ is an $\mathbb{E}_\infty$-ring genuine $C_2$-spectrum. In consequence, we obtain a functor $$ \KR\colon \Alg_{\mathbb{N}_\infty}(\Sp^{C_2}) \to  \Alg_{\mathbb{E}_\infty}(\Sp^{C_2}).$$
\end{corollary}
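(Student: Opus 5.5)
The plan is to combine three ingredients: the multiplicativity of $\KR$ from \cref{thm:univ_prop_Sp}, the preceding proposition that turns a normed ring into a commutative algebra in $\Wald^\gd_\infty$, and the fact that lax symmetric monoidal functors preserve $\mathbb{E}_\infty$-algebras.

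First, for a normed $\mathbb{E}_\infty$-ring $A$, i.e.\ an $\mathbb{E}_\infty$-algebra in $\Sp^{C_2}$ with its multiplication $N(\nu(A))\to A$, the preceding proposition shows that $\Mod_{\nu(A)}(\Sp)^\perf$ carries a genuine duality compatible with its symmetric monoidal structure. The duality is the symmetric monoidal one, $M\mapsto M^\dual$. The genuine refinement is the lax symmetric monoidal quadratic functor $M\mapsto (M^\dual\otimes_A M^\dual)^{C_2}$. Because all the structure is lax symmetric monoidal, I would read this data as an $\mathbb{E}_\infty$-algebra in $\Cat_\infty^\gd$, with the tensor product inherited from \cref{fhbhnj}.

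Second, the stable $\infty$-category $\Mod_{\nu(A)}(\Sp)^\perf$ is an exact $\infty$-category whose cofibrations are all maps. Its tensor product is exact in each variable separately. It therefore gives a map $\caC\otimes\caC\to\caC$ out of the tensor product of \cref{description_tensor_Waldd}. This makes it a commutative algebra in $\Exact_\infty$, and with the duality it becomes one in $\Wald^\d_\infty$.

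Third, to lift this to $\Wald^\gd_\infty$, I would use the explicit description in \cref{description_tensor_Waldgd}. There the genuine structure on the tensor product is freely generated by the right fibration on $H\times T$. A multiplication on the genuine part then amounts to a map of right fibrations $H\times H\to H$ over $\caH(\caC\otimes\caC)\to\caH(\caC)$. This map comes from the lax structure maps of the quadratic functor. Since $\Wald^\gd_\infty\to\Wald^\d_\infty$ is a symmetric monoidal cocartesian fibration, lifting the algebra reduces to this compatibility. I expect this step to be the main obstacle: one has to check that the lax monoidal quadratic functor really encodes a coherent $\mathbb{E}_\infty$-structure on the hermitian right fibration, including the unit condition against the standard genuine refinement on the unit. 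I would first try to do this by straightening the right fibration to the functor $\caH(\caC)^\op\to\Spc$ and invoking Day convolution.

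Finally, \cref{thm:univ_prop_Sp} makes $\KR\colon\Wald^\gd_\infty\to\Sp^{C_2}$ lax symmetric monoidal, so it induces a functor $\Alg_{\mathbb{E}_\infty}(\Wald^\gd_\infty)\to\Alg_{\mathbb{E}_\infty}(\Sp^{C_2})$. Applying it to the algebra above shows that $\KR(A)$ is an $\mathbb{E}_\infty$-ring genuine $C_2$-spectrum. Functoriality in $A$ requires the assignment $A\mapsto\Mod_{\nu(A)}(\Sp)^\perf$ to extend to a functor $\Alg_{\mathbb{N}_\infty}(\Sp^{C_2})\to\Alg_{\mathbb{E}_\infty}(\Wald^\gd_\infty)$. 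A map of normed rings induces base change, which is symmetric monoidal and commutes with the norm and the quadratic functor. Composing this functor with $\KR$ gives the claimed functor.
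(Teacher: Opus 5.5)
Your proposal is correct and follows the same route the paper intends. The paper gives no written proof of this corollary, but its implicit argument is yours: the preceding proposition makes $\Mod_{\nu(A)}(\Sp)^\perf$ an $\mathbb{E}_\infty$-algebra in $\Wald^{\gd}_\infty$, and the lax symmetric monoidal functor $\KR$ of \cref{thm:univ_prop_Sp} carries it, functorially in $A$, to an $\mathbb{E}_\infty$-algebra in $\Sp^{C_2}$. The step you flag as the main obstacle, lifting the algebra along $\Wald^{\gd}_\infty\to\Wald^{\d}_\infty$ via Day convolution, is one the paper folds into that preceding proposition without spelling it out. Your treatment of it is extra detail, not a different route.
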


We highlight below a specific case of the previous results.

\begin{proposition}
Let $A$ be an $\mathbb{E}_\infty$-ring spectrum.
The $\infty$-category $\Mod_A(\Sp)^\perf$ of dualizable $A$-module spectra refines canonically to a small stable symmetric monoidal $\infty$-category with genuine duality. 
\end{proposition}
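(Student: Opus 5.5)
The plan is to reduce to the previous proposition. Given an $\mathbb{E}_\infty$-ring spectrum $A$, I would produce an $\mathbb{E}_\infty$-genuine $C_2$-spectrum $B$ with $\nu(B)\simeq A$, together with a map $N(\nu(B))\to B$ in $\Alg_{\mathbb{E}_\infty}(\Sp^{C_2})$ that induces the codiagonal on underlying rings. The natural choice is $B \coloneqq N(A)$. The map $N(\nu(N(A)))\to N(A)$ should then be $N$ applied to the multiplication $\nu(N(A))\simeq A\otimes A\to A$. This map lives in $\Alg_{\mathbb{E}_\infty}(\Sp)$, since the multiplication of an $\mathbb{E}_\infty$-ring is an $\mathbb{E}_\infty$-map, namely the codiagonal $A\otimes A \simeq A\sqcup A\to A$ in $\Alg_{\mathbb{E}_\infty}(\Sp)$. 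Applying the symmetric monoidal functor $N$ gives a map in $\Alg_{\mathbb{E}_\infty}(\Sp^{C_2})$.

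Equivalently, I would use the adjunction $N\dashv\nu$ on $\mathbb{E}_\infty$-algebras recorded in the remark above. The required map $N(\nu(B))\to B$ is then the counit of this adjunction at $B=N(A)$. By the triangle identity its underlying map $\nu N\nu N(A)\to \nu N(A)$ is identified, after $\nu N(A)\simeq A\otimes A$ with its $C_2$-action, with the codiagonal. Checking this identification is the step I expect to be the main obstacle. Concretely, one must verify that the underlying $\mathbb{E}_\infty$-ring of $N(A)$ is $A\otimes A$ with the swap action, and that the counit restricts to the fold map. I would first try to read both facts off from the construction of $N$ in \cite{HHR_Kervaire}: there the underlying spectrum of $N(X)$ is $X\otimes X$, and the counit is induced from the multiplication.

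Once this is established, the previous proposition applies with $\nu(B)\simeq A$. It endows $\Mod_A(\Sp)^\perf$ with a genuine duality compatible with the symmetric monoidal structure. Concretely, the duality is given by the symmetric monoidal duality $M\mapsto M^\dual$ together with the lax symmetric monoidal quadratic functor
$$M\mapsto (N(M^\dual)\otimes_{N(A\otimes A)}N(A))^{C_2}.$$
Canonicity holds because every choice made is functorial in $A$: the norm, the counit, and the module constructions.

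Finally, I would note that this refinement is the genuine symmetric refinement referred to in \cref{cor:normed_Einfinty_ringspectrum}. To check this, compute the underlying bilinear part: it is $M^\dual\otimes_A M^\dual$ with the swap, which is the symmetric bilinear form. The resulting genuine structure is then the one obtained from the Tate-diagonal-type map $X^{C_2}\to X^{hC_2}$ built into the previous proof.
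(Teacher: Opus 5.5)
There is a genuine gap: the choice $B\coloneqq N(A)$ does not satisfy $\nu(B)\simeq A$.

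The previous proposition puts a genuine duality on $\Mod_{\nu(B)}(\Sp)^\perf$. As you yourself compute, $\nu(N(A))\simeq A\otimes A$ with the swap action. So you would obtain a structure on $\Mod_{A\otimes A}(\Sp)^\perf$, not on $\Mod_A(\Sp)^\perf$, and in general $A\otimes A\not\simeq A$. Your sentence ``the previous proposition applies with $\nu(B)\simeq A$'' contradicts that computation. Your displayed formula is ill-typed for the same reason: for an $A$-module $M$, $N(M^\dual)$ is an $N(A)$-module, not an $N(A\otimes A)$-module, so $N(M^\dual)\otimes_{N(A\otimes A)}N(A)$ is undefined.

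The step you flagged as the main obstacle also fails for this $B$. View the underlying map of the counit $N(\nu(B))\to B$ as an $\mathbb{E}_\infty$-map out of $\nu(B)\otimes\nu(B)\simeq\nu(B)\sqcup\nu(B)$. The triangle identity only says it is the identity on the first summand. Since $C_2$ acts on $\nu N\nu(B)$ just by the swap, equivariance forces the restriction to the second summand to be the involution $\sigma$ of $\nu(B)$. So the underlying map is $(\id,\sigma)$, which is the codiagonal only when $\sigma$ is trivial. For $B=N(A)$, $\sigma$ is the swap of $A\otimes A$, so the hypothesis of the previous proposition fails. Your alternative, $N$ applied to the multiplication, is not the counit either, and its underlying map $m\otimes m$ is not the codiagonal of $A\otimes A$.

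What you need is an $\mathbb{E}_\infty$-algebra $B$ in $\Sp^{C_2}$ with $\nu(B)\simeq A$ carrying the trivial involution. The paper takes $B=A^\triv$, the image of $A$ under the symmetric monoidal left adjoint $\triv\colon\Sp\to\Sp^{C_2}$ of $(-)^{C_2}$. Then $\nu(A^\triv)\simeq A$ with trivial action. By the argument above, the counit $N(\nu(A^\triv))=N(A)\to A^\triv$ of $N\dashv\nu$ has underlying map $(\id,\id)$. That is the multiplication $A\otimes A\to A$, i.e.\ the codiagonal, so no computation with the construction of the norm in \cite{HHR_Kervaire} is needed. The previous proposition then applies directly to $\Mod_A(\Sp)^\perf$, with quadratic functor $M\mapsto (N(M^\dual)\otimes_{N(A)}A^\triv)^{C_2}$. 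Canonicity comes from the naturality in $A$ of $\triv$ and of the counit.
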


\begin{proof}
Let $\triv\colon \Sp \to \Sp^{C_2}$ the trivial genuine $C_2$-spectrum functor, which is left adjoint to taking $C_2$-fixed points. We consider the counit map $ N(\nu(A^\triv)) \to A^\triv.$
\end{proof}

\begin{corollary}
Let $A$ be an $\mathbb{E}_\infty$-ring spectrum.
Then  $\KR(A) \coloneqq\KR(\Mod_A(\Sp)^\perf)$ is an $\mathbb{E}_\infty$-ring genuine $C_2$-spectrum. In consequence, we obtain a functor $$ \KR\colon \Alg_{\mathbb{E}_\infty}(\Sp) \to  \Alg_{\mathbb{E}_\infty}(\Sp^{C_2}).$$
\end{corollary}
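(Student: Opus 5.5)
The corollary follows by combining the preceding proposition with \cref{thm:univ_prop_Sp}. The functor on algebras comes from functoriality of the construction in $A$.

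First, for an $\mathbb{E}_\infty$-ring spectrum $A$, the preceding proposition (obtained from the normed case via the counit $N(\nu(A^\triv)) \to A^\triv$) gives $\Mod_A(\Sp)^\perf$ the structure of a small stable symmetric monoidal $\infty$-category with genuine duality. I would upgrade this to a commutative algebra in $\Wald_\infty^\gd$ in three steps.
\begin{enumerate}
\item Take the cofibrations to be all maps and the fibrations to be all maps. Then $\Mod_A(\Sp)^\perf$ is exact, and it lies in $\Wald^\d_\infty$ via the symmetric monoidal duality.
\item Verify condition (2) of \cref{Waldgd} for the right fibration classified by the quadratic functor $M \mapsto (M^\dual\otimes_A M^\dual)^{C_2}$. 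This holds because that functor is quadratic, hence sends pushouts (which are pullbacks) to pullbacks.
\item Check that the tensor product and unit are compatible with the description of the symmetric monoidal structure in \cref{description_tensor_Waldgd}. A biexact, duality-compatible tensor product together with a lax symmetric monoidal genuine refinement yields exactly an $\mathbb{E}_\infty$-algebra in $\Wald_\infty^\gd$. Concretely, the relative tensor product induces a map $\Mod_A^\perf\otimes\Mod_A^\perf \to \Mod_A^\perf$ in $\Wald^\d_\infty$, and the lax structure of the quadratic functor supplies the map on genuine refinements out of the free right fibration on $H\times H$.
\end{enumerate}

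Second, \cref{thm:univ_prop_Sp} says $\KR\colon \Wald_\infty^\gd \to \Sp^{C_2}$ is lax symmetric monoidal. Lax symmetric monoidal functors preserve $\mathbb{E}_\infty$-algebras, so $\KR(\Mod_A(\Sp)^\perf)$ is an $\mathbb{E}_\infty$-ring genuine $C_2$-spectrum. Functoriality is the composite
\[\Alg_{\mathbb{E}_\infty}(\Sp) \to \Calg(\Wald^\gd_\infty) \xrightarrow{\KR} \Alg_{\mathbb{E}_\infty}(\Sp^{C_2}).\]
Here the first functor sends $A \mapsto \Mod_A(\Sp)^\perf$ and $f\colon A\to B$ to extension of scalars $B\otimes_A -$. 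Extension of scalars is symmetric monoidal and exact, and it is compatible with the genuine refinements via the natural map
\[(M^\dual\otimes_A M^\dual)^{C_2} \to ((B\otimes_A M)^\dual \otimes_B (B\otimes_A M)^\dual)^{C_2}.\]
This map is induced by the unit of $A^\triv \to B^\triv$.

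The main obstacle is making the first functor genuinely functorial at the $\infty$-categorical level, rather than merely objectwise. The plan is to build the genuine refinement uniformly as a functor $\Alg_{\mathbb{E}_\infty}(\Sp) \to \Calg(\Cat_\infty^\gd)$, produced from the lax symmetric monoidal functors $\Mod_A\to \Mod_{A^\triv}(\Sp^{C_2}) \to \Sp$, which are natural in $A$. I would then use that the forgetful functor $\Wald_\infty^\gd \to \Wald_\infty^\d$ is a cocartesian fibration of symmetric monoidal $\infty$-categories (\cref{fhbhnj}) to transport the lax-monoidal naturality.
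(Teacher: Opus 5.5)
Your overall route is the one the paper intends. There the corollary has no separate proof: it is the preceding proposition (the genuine duality on $\Mod_A(\Sp)^\perf$ obtained from the counit $N(\nu(A^\triv))\to A^\triv$) combined with \cref{thm:univ_prop_Sp}, using that lax symmetric monoidal functors carry $\mathbb{E}_\infty$-algebras to $\mathbb{E}_\infty$-algebras. Your composite through $\Calg(\Wald^\gd_\infty)$ is the intended functor, and your remarks on coherence go beyond what the paper spells out.

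The step that fails as argued is (2). A quadratic functor does not send pushout squares to pullback squares. A functor between stable $\infty$-categories that does so is exact, hence additive. But $Q(M)=(M^\dual\otimes_A M^\dual)^{C_2}$ has nonzero cross-effect, since $Q(M\oplus N)\simeq Q(M)\oplus Q(N)\oplus (M^\dual\otimes_A N^\dual)$. So if condition (2) of \cref{Waldgd} required this of $Q$, it would be false.

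The condition actually concerns only the fibers of $H\to\caH(\caE)$. That is, it concerns $\mathrm{fib}(Q\to Q^s)$, where $Q^s(M)=(M^\dual\otimes_A M^\dual)^{hC_2}$ is the functor of hermitian forms classifying $\caH(\caE)\to\caE$. Put $X=N(M^\dual)\otimes_{N(A)}A^\triv$. Isotropy separation $X^{C_2}\simeq \Phi^{C_2}X\times_{X^{tC_2}}X^{hC_2}$ gives $\mathrm{fib}(Q\to Q^s)\simeq \mathrm{fib}(\Phi^{C_2}X\to X^{tC_2})$. Both terms are exact in $M$:
\begin{itemize}
\item $\Phi^{C_2}$ is symmetric monoidal and colimit-preserving with $\Phi^{C_2}\circ N\simeq \id$;
\item the Tate construction kills the induced cross-term, so $M\mapsto X^{tC_2}$ is exact.
\end{itemize}
Hence, for a pushout square in $\Mod_A(\Sp)^\perf$, the square of values of $Q$ is cartesian over the corresponding square of values of $Q^s$. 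Applying $\Omega^\infty$ and taking fibers over the given hermitian forms yields condition (2). Alternatively, cite the identification in \cite{hsv} of stable $\infty$-categories with genuine duality as Waldhausen $\infty$-categories with genuine duality, which is what the paper tacitly relies on.
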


\section{Multiplicative motives}

In \cite[Theorem 10.8]{hsv} we showed that real $K$-theory admits a factorization
\[
\begin{tikzcd}
\Wald_\infty^\gd\ar[rr, "\KR"]\ar[dr]        &&\Spc^{C_2}\\
&\caM\ar[ur, "\KR'"']  &
\end{tikzcd}
\]
where $\caM$ is a category akin to that of non-commutative motives, and $\KR'$ is the genuine excisive approximation of the canonical real functor $\iota\colon \caM \to \Spc^{C_2}$ that sends $\caC\in \caM$ to its space of objects. 

In this section we recall the construction of $\caM$ and endow it with a symmetric monoidal structure. Furthermore, we study the multiplicative structure of $\KR'$: we show that it is also lax symmetric monoidal, and it enjoys a universal property.

\begin{definition}[{\cite[Definition 8.16., Proposition 8.20.]{hsv}}]
We define the non-abelian derived $\infty$-category of $\Wald_\infty^\gd$, that we denote by  
$$\D\Wald_\infty^\gd \subset \Fun_{\Spc^{C_2}}(((\Wald_\infty^\gd)^{\aleph_1})^\op, \Spc^{C_2})$$ 
as the smallest full subcategory containing the representables and closed under small sifted colimits, where $(\Wald_\infty^\gd)^{\aleph_1}$ denotes the $\aleph_1$-compact objects of $\Wald_\infty^\gd$. 
\end{definition}

We are not ready to introduce the category of motives $\caM$.

\begin{definition}
The real $\infty$-category of fissile Waldhausen $\infty$-categories with genuine duality is the full subcategory 
$$\D\Wald_\infty^{\gd,\fiss} \subset \D\Wald_\infty^{\gd}$$ 
spanned by the real functors $(\Wald_\infty^{\gd})^\op \to \widehat{\Spc}^{C_2}$
that invert the morphisms  $\gamma_3\colon S(\caD)_3 \to \widetilde{\caD \times \caD} \times \caD$ that sends $(A \to B \to C) \mapsto (A, C/B, B/A)$ in $\Wald^\gd_\infty $ for $D \in \Wald^\gd_\infty .$
\end{definition}

\begin{remark} 
Note that in \cite[Proposition 8.20]{hsv} we proved that we could take the full subcategory of $\Fun_{\Spc^{C_2}}(((\Wald_\infty^\gd)^{\aleph_1})^\op, \Spc^{C_2})$ spanned by the real presheaves that preserve finite products and cotensors with $C_2$.
\end{remark}

The key ingredient to establish a monoidal structure on $\D\Wald^\gd_\infty$ is the enriched Day conolution, developed for example in \cite{heine2024higher,HinichDayConvolution}.

\begin{proposition}[{\cite[Corollary 5.3.10.]{heine2024higher}}]\label{Dayco}
Let $\caV$ be a presentably symmetric monoidal\footnote{This means, $\caV$ is presentable and the tensor product commutes with colimits in each variable.} $\infty$-category and 
$\caM $ a small symmetric monoidal $\caV$-enriched $\infty$-category.
The $\caV$-enriched $\infty$-category $\caP_\caV(\caM):= \Fun_\caV(\caM^\op,\caV)$ refines to a presentably symmetric monoidal $\caV$-enriched $\infty$-category.
For every presentably symmetric monoidal $\caV$-enriched $\infty$-category $\caN$ the following induced map is an equivalence
$$\map_{\Calg(\Pr\LMod)}(\Fun_\caV(\caM^\op,\caV),\caN) \to \map_{\Calg(\caV\mathrm{-}\Cat)}(\caM,\caN).$$
\end{proposition}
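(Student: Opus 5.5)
The plan is to construct the symmetric monoidal structure on $\caP_\caV(\caM)$ by Day convolution and then obtain the universal property by identifying presentably symmetric monoidal $\caV$-enriched $\infty$-categories with commutative algebras in $\Mod_\caV(\Pr^\L)$.

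First, I would note that the tensor product of $\caV$ commutes with colimits in each variable, so $\caV \in \Calg(\Pr^\L)$. Hence presentable $\caV$-enriched $\infty$-categories are identified with $\Mod_\caV(\Pr^\L)$, which carries the relative tensor product $\otimes_\caV$. Next, I would realize $\caP_\caV(\caM)$ as the free $\caV$-tensored cocompletion of $\caM$. This means that for every presentable $\caV$-enriched $\caN$, restriction along the enriched Yoneda embedding $\y\colon \caM \to \caP_\caV(\caM)$ gives an equivalence between enriched left adjoints $\caP_\caV(\caM)\to\caN$ and $\caV$-enriched functors $\caM \to \caN$. Packaged functorially, $\caP_\caV(-)$ is left adjoint to the forgetful functor $\Mod_\caV(\Pr^\L) \to \caV\mathrm{-}\Cat$, restricted to small objects on the source side.

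The key step is to show that $\caP_\caV(-)\colon \caV\mathrm{-}\Cat \to \Mod_\caV(\Pr^\L)$ is symmetric monoidal, where $\caV\mathrm{-}\Cat$ carries the tensor product of enriched $\infty$-categories. Concretely, I would verify that the canonical map
$$\caP_\caV(\caM)\otimes_\caV \caP_\caV(\caM')\to \caP_\caV(\caM\otimes\caM')$$
is an equivalence. Both sides corepresent $\caV$-bilinear cocontinuous functors, equivalently enriched functors out of $\caM\otimes\caM'$. By the universal property above, it suffices to compare mapping objects on representables, where the statement reduces to the compatibility of the enriched Yoneda lemma with tensor products.

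Once this holds, the left adjoint is symmetric monoidal, so it sends commutative algebras to commutative algebras. Since the functor is symmetric monoidal, the adjunction lifts to an adjunction between $\Calg(\caV\mathrm{-}\Cat)$ and $\Calg(\Mod_\caV(\Pr^\L))$. This produces the Day convolution structure on $\Fun_\caV(\caM^\op,\caV)$ and yields the stated equivalence of mapping spaces. Here the right adjoint sends $\caN$ to its underlying symmetric monoidal $\caV$-enriched $\infty$-category.

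I expect the main obstacle to be making the monoidality of $\caP_\caV$ coherent at the $\infty$-categorical level, rather than only objectwise. My first approach would be to use the enriched $\infty$-operad framework of \cite{heine2024higher}, or Hinich's \cite{HinichDayConvolution}. In that framework the Day convolution is constructed directly as an operadic left Kan extension, and the universal property comes from identifying lax monoidal functors out of $\caM$ with algebras.
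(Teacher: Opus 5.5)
The paper gives no argument for this statement. It is imported verbatim from \cite[Corollary 5.3.10]{heine2024higher}, so the paper's ``proof'' is Heine's general machinery of enriched presheaves and weighted colimits.

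Your route is different: it is the enriched analogue of Lurie's proof of the unenriched statement (\emph{Higher Algebra}, \S4.8.1). You exhibit $\caP_\caV$ as a symmetric monoidal (partial) left adjoint to the forgetful functor and then apply $\Calg(-)$. The strategy is sound, and it gives slightly more than the statement: applying $\Alg_{\mathcal{O}}(-)$ instead of $\Calg(-)$ yields Day convolution for every $\infty$-operad $\mathcal{O}$ at no extra cost. However, your last paragraph hands the only hard step back to the very sources the paper cites. As written, the proposal therefore reduces the statement to the same black box rather than proving it.

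To make the argument self-contained, you should locate the difficulty more precisely. It is not the coherence of $\caP_\caV$. It is the construction of a lax symmetric monoidal structure on the forgetful functor $U\colon \Mod_\caV(\Pr^\L)\to\widehat{\caV\mathrm{-}\Cat}$. That is, you need a coherent comparison between the relative tensor product $\otimes_\caV$ and the tensor product of enriched $\infty$-categories, on top of Heine's identification of presentable $\caV$-enriched $\infty$-categories with $\Mod_\caV(\Pr^\L)$.

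Once $U$ is lax monoidal, doctrinal adjunction (\emph{Higher Algebra}, \S7.3.2) equips the left adjoint with canonical comparison maps. Your objectwise check that $\caP_\caV(\caM)\otimes_\caV\caP_\caV(\caM')\to\caP_\caV(\caM\otimes\caM')$ and $\caP_\caV(*)\to\caV$ are equivalences is then exactly what makes it strongly symmetric monoidal, and no further coherence is needed.

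That objectwise check is best done by corepresentability, not by comparing mapping objects of representables, which are not directly accessible in a relative tensor product. Use the \emph{enriched} (not merely underlying) universal property $\Fun^\L_\caV(\caP_\caV(\caM'),\caN)\simeq\Fun_\caV(\caM',\caN)$ together with the closedness of $\caV\mathrm{-}\Cat$. Then both sides corepresent $\Fun_\caV(\caM,\Fun_\caV(\caM',\caN))\simeq\Fun_\caV(\caM\otimes\caM',\caN)$.

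Finally, $U$ lands in \emph{large} $\caV$-enriched $\infty$-categories, so $\caP_\caV$ is only a partially defined left adjoint. Lifting the adjunction to $\Calg$ therefore needs either a relative form of the doctrinal adjunction statement or Lurie's device. In that device you work with large $\caV$-enriched $\infty$-categories admitting small weighted colimits, where free cocompletion is defined everywhere. The $\infty$-operad structure there has as multimorphisms the enriched functors out of $\caN_1\otimes\cdots\otimes\caN_n$ that preserve weighted colimits in each variable separately, so its forgetful functor is lax monoidal for free. This device removes the size issue and reduces the comparison with $\otimes_\caV$ to identifying multimorphism spaces, rather than constructing a coherent structure by hand.
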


\begin{corollary}
The symmetric monoidal structure on $\Wald_\infty^\gd$ induces a  symmetric monoidal structure on $\Fun_{\Spc^{C_2}}(((\Wald_\infty^\gd)^{\aleph_1})^\op, \Spc^{C_2})$ given by Day convolution.

\end{corollary}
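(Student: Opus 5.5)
The claim is a direct application of \cref{Dayco}, with $\caV = \Spc^{C_2}$ and $\caM = ((\Wald_\infty^\gd)^{\aleph_1})^\op$. So the plan is to supply the two inputs that proposition needs: that $\Spc^{C_2}$ is presentably symmetric monoidal, and that the $\aleph_1$-compact objects form a small symmetric monoidal real $\infty$-category.

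\textbf{Step 1: the enriching category.} $\Spc^{C_2}$ with the cartesian product is presentably symmetric monoidal, because the product commutes with colimits in each variable; it is a presheaf $\infty$-category on the orbit category. This is immediate.

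\textbf{Step 2: a small symmetric monoidal real $\infty$-category.} By \cref{fhbhnj}, $\Wald_\infty^\gd$ carries a closed symmetric monoidal structure. By \cite[Corollary 7.18]{hsv}, recalled above, this structure makes $\Wald_\infty^\gd$ a real symmetric monoidal $\infty$-category, i.e.\ a commutative monoid in $\Spc^{C_2}\mathrm{-}\Cat$. We must check that the full subcategory $(\Wald_\infty^\gd)^{\aleph_1}$ is closed under the tensor product and contains the unit; then it inherits a real symmetric monoidal structure and is essentially small.

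\begin{itemize}
\item \emph{Unit.} By \cref{description_tensor_Waldgd}, the unit is the free exact $\infty$-category on one generator with the standard genuine refinement. It is the image of a compact object under the free functor $\Cat_\infty^\gd \to \Wald_\infty^\gd$, which preserves compact objects since its right adjoint preserves filtered colimits. Hence the unit is compact, in particular $\aleph_1$-compact.
\item \emph{Tensor products.} The tensor product preserves colimits in each variable, as it is closed. Using that $\Wald_\infty^\gd$ is $\aleph_1$-compactly generated, write each object as an $\aleph_1$-small colimit of compact generators, e.g.\ free objects on finite $\infty$-categories with genuine duality. For free objects, $\mathrm{Fr}(K)\otimes\mathrm{Fr}(L)\simeq \mathrm{Fr}(K\times L)$ by \cref{fhbhnj}. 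This reduces closure under tensor products to the analogous statement in $\Cat_\infty^\gd$, where products of compact objects are compact. An $\aleph_1$-small colimit of $\aleph_1$-compact objects is $\aleph_1$-compact, so the tensor product of two $\aleph_1$-compact objects is $\aleph_1$-compact.
\end{itemize}

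Passing to the opposite category keeps the structure symmetric monoidal and real.

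\textbf{Step 3: apply \cref{Dayco}.} With $\caM = ((\Wald_\infty^\gd)^{\aleph_1})^\op$, the proposition endows $\Fun_{\Spc^{C_2}}(((\Wald_\infty^\gd)^{\aleph_1})^\op,\Spc^{C_2})$ with a presentably symmetric monoidal real structure. This is the Day convolution, characterized by making the enriched Yoneda embedding symmetric monoidal. Concretely, $\y(\caC)\otimes\y(\caD)\simeq \y(\caC\otimes\caD)$, and the tensor product is extended by colimits in each variable.

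\textbf{Main obstacle.} The only substantive step is Step 2, namely closure of $\aleph_1$-compact objects under $\otimes$. The second bullet needs the generation statement used in the remark on $\aleph_1$-compactness (compact generation of $\Wald_\infty^\d$, $\caR$, $\Cat_\infty$) to supply generators that are free. If that fails, the fallback is the explicit formula of \cref{description_tensor_Waldgd}. There we would check separately that $\caC\otimes\caD$ in $\Exact_\infty$ is $\aleph_1$-compact, and that the free right fibration on $H\times T$ stays $\aleph_1$-small.
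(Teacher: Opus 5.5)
Your plan is the paper's: the corollary is stated without proof right after \cref{Dayco}, and it is just that proposition applied with $\caV=\Spc^{C_2}$. There is one minor point and one substantive one.

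The minor point is variance. \cref{Dayco} produces $\Fun_\caV(\caM^\op,\caV)$, so you want $\caM=(\Wald_\infty^\gd)^{\aleph_1}$, not its opposite. With your choice you would get covariant functors on $(\Wald_\infty^\gd)^{\aleph_1}$. Your formula $\y(\caC)\otimes\y(\caD)\simeq\y(\caC\otimes\caD)$ is in fact the one for the contravariant Yoneda embedding of $(\Wald_\infty^\gd)^{\aleph_1}$.

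The substantive point concerns Step 2. The paper does not carry it out at all; it takes for granted that $(\Wald_\infty^\gd)^{\aleph_1}$ is a symmetric monoidal real subcategory. You are right that this is the real input, but your argument for closure under $\otimes$ does not work.

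The reduction to free objects fails because the objects $\mathrm{Fr}(K)$, for $K\in\Cat_\infty^\gd$, do not generate $\Wald_\infty^\gd$ under colimits.
\begin{itemize}
\item We have $\map(\mathrm{Fr}(K),-)\simeq\map(K,U(-))$, with $U\colon\Wald_\infty^\gd\to\Cat_\infty^\gd$ the forgetful functor. So these objects could only generate if $U$ were conservative.
\item $U$ is not conservative, because it forgets which maps are cofibrations. Take a stable $\infty$-category with genuine duality $\caD$, made into an object of $\Wald_\infty^\gd$ with all maps as cofibrations. The same $\caD$ with the split exact structure is again an object, since condition (2) of \cref{Waldgd} only gets weaker with fewer cofibrations.
\item The identity from the split version to the maximal one is then a morphism inverted by $U$. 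It is not an equivalence, since $X\to 0$ is not split for $X\neq 0$.
\end{itemize}

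So an $\aleph_1$-compact object need not be a retract of an $\aleph_1$-small colimit of free objects. The formula $\mathrm{Fr}(K)\otimes\mathrm{Fr}(L)\simeq\mathrm{Fr}(K\times L)$ therefore does not cover the general case. Any generating set must include objects that freely encode a cofibration (and dually a fibration), and \cref{fhbhnj} says nothing about tensor products of those.

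What is actually needed is your fallback, and it carries all the content. First, show from the construction of the tensor product on $\Exact_\infty$ (\cite[Proposition 7.10]{hsv}) that $\caC\otimes\caD$ is $\aleph_1$-compact when $\caC$ and $\caD$ are. One way is to check that the functor it corepresents commutes with $\aleph_1$-filtered colimits. Second, show that the free genuine refinement of \cref{description_tensor_Waldgd} generated by $H\times T$ preserves $\aleph_1$-compactness. As written, that part of your proposal is only a sketch, so the closure claim remains unproved.
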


\begin{proposition}
The presentably monoidal structure on $\Fun_{\Spc^{C_2}}(((\Wald_\infty^\gd)^{\aleph_1})^\op, \Spc^{C_2})$ restricts to a closed symmetric monoidal structure on $\D\Wald_\infty^\gd$.
\end{proposition}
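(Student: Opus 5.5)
We need to show that the Day convolution on $\caP:=\Fun_{\Spc^{C_2}}(((\Wald_\infty^\gd)^{\aleph_1})^\op, \Spc^{C_2})$ restricts to $\D\Wald_\infty^\gd$, and that the restricted structure is closed. We use the description recalled in the remark above, following \cite[Proposition 8.20]{hsv}: $\D\Wald_\infty^\gd$ is the full subcategory of real presheaves that preserve finite products and cotensors with $C_2$, i.e.\ which send finite coproducts and tensors with $C_2$ in $(\Wald_\infty^\gd)^{\aleph_1}$ to products and cotensors. This exhibits $\D\Wald_\infty^\gd$ as the intersection of two accessible localizations of $\caP$.

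\textbf{Step 1: each localization is monoidal.} We apply \cref{prop:subcats_are_monodal_localizations} with $\caV=\Spc^{C_2}$ and $\caC=((\Wald_\infty^\gd)^{\aleph_1})^{\op}$, i.e.\ presheaves on $(\Wald_\infty^\gd)^{\aleph_1}$.

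First we check the small-category input. The monoidal structure of \cref{fhbhnj} restricts to $\aleph_1$-compact objects, because the free functor $\Cat_\infty^\gd\to\Wald_\infty^\gd$ is symmetric monoidal and the tensor product is closed, hence preserves colimits in each variable. Consequently $(\Wald_\infty^\gd)^{\aleph_1}$ is a small real symmetric monoidal $\infty$-category, and $\caP$ carries the Day convolution of \cref{Dayco}.

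The generating local equivalences are $\y(A_1)\amalg\dots\amalg\y(A_n)\to\y(A_1\times\dots\times A_n)$ for finite products, and $C_2\odot\y(A)\to\y(A^{C_2})$ for cotensors. Here finite coproducts in $\caC$ are finite products in $\Wald_\infty^\gd$, and tensors in $\caC$ are cotensors in $\Wald_\infty^\gd$.

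Tensoring with a representable $\y(X)$ must again give a generating equivalence. By Day convolution, $\y(B)\otimes\y(X)\simeq\y(B\otimes X)$. So it suffices that $-\otimes X$ on $\Wald_\infty^\gd$ preserves finite products and cotensors with $C_2$. We get this from additivity: finite products agree with finite coproducts in $\Wald_\infty^\gd$, and $-\otimes X$ preserves those since the structure is closed. For cotensors with $C_2$, we use that $\widetilde{\caD\times\caD}$ is also the tensor with $C_2$; this is the genuine preadditivity of $\Wald_\infty^\gd$ in \cite{hsv}.

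Because representables generate $\caP$ under colimits and Day convolution is cocontinuous in each variable, checking against representables suffices. Hence both full subcategories are symmetric monoidal accessible localizations.

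\textbf{Step 2: intersect.} By \cref{lem:pullback_symetric_loc}, the intersection $\D\Wald_\infty^\gd$ is again a symmetric monoidal localization. The localized tensor product $L(F\otimes G)$ is therefore a presentably symmetric monoidal structure, with unit $L(\y(\mathbb 1))=\y(\mathbb 1)$.

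Closedness follows because localizations compatible with the tensor product are exactly those for which the internal hom $\underline{\Hom}(F,G)$ of $\caP$ is local whenever $G$ is local. Alternatively, we can use the adjoint functor theorem: in each variable the localized tensor product is a colimit-preserving endofunctor of a presentable $\infty$-category.

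One clarification on the word "restricts". Representables are closed under the tensor product, and Day convolution commutes with sifted colimits in each variable. Hence $\D\Wald_\infty^\gd$, being generated by representables under sifted colimits, is closed under Day convolution on the nose, and the localization $L$ is not even needed.

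\textbf{Main obstacle.} The delicate point is the compatibility of $-\otimes X$ with cotensors by $C_2$ in $\Wald_\infty^\gd$. For this I would first try identifying $\widetilde{\caD\times\caD}\simeq C_2\odot\caD$ as the free genuine duality object on $\caD$. The required compatibility then reduces to the fact that $-\otimes X$ preserves colimits, together with \cref{description_tensor_Waldgd}.
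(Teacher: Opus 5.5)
Your argument is correct, but its main line is not the paper's route. The paper's entire proof is what you put in your closing ``clarification''. Day convolution on $\Fun_{\Spc^{C_2}}(((\Wald_\infty^\gd)^{\aleph_1})^\op,\Spc^{C_2})$ is cocontinuous in each variable, and $\y(A)\otimes\y(B)\simeq\y(A\otimes B)$ by \cite[Theorem 3.9.2]{heine2024higher}. Hence the smallest full subcategory containing the representables and closed under sifted colimits is closed under $\otimes$.

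Your Steps 1--2 instead go through the characterization of \cite[Proposition 8.20]{hsv}. They exhibit $\D\Wald_\infty^\gd$ as a symmetric monoidal accessible localization via \cref{prop:subcats_are_monodal_localizations} and \cref{lem:pullback_symetric_loc}, which is the template the paper uses for $\mathrm{rThy}$ in \cref{monoidal_localizations}. This needs more input: Proposition 8.20 itself, and the fact that $-\otimes X$ on $\Wald_\infty^\gd$ preserves finite coproducts and tensors with $C_2$. In exchange it gives presentability and closedness immediately, because the localized tensor product is cocontinuous in each variable. The paper's closure argument by itself only shows that the restricted tensor preserves sifted colimits inside $\D\Wald_\infty^\gd$. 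Closedness then needs one more observation, e.g.\ that the Yoneda embedding into $\D\Wald_\infty^\gd$ preserves finite coproducts and $C_2$-tensors.

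There are two fixable slips in Step 1.
\begin{enumerate}
\item With $\caC=((\Wald_\infty^\gd)^{\aleph_1})^\op$ you get $\caP_\caV(\caC)=\Fun_{\Spc^{C_2}}((\Wald_\infty^\gd)^{\aleph_1},\Spc^{C_2})$. That is the ambient category of $\mathrm{rThy}$, not the presheaf category in the statement. You should take $\caC=(\Wald_\infty^\gd)^{\aleph_1}$, with generating maps $\y(A)\amalg\y(B)\to\y(A\amalg B)$ and $C_2\odot\y(A)\to\y(C_2\odot A)$. The check is otherwise unchanged, since $\Wald_\infty^\gd$ has biproducts and $C_2\odot A\simeq\widetilde{A\times A}$.
\item In your ``main obstacle'' paragraph, preserving ordinary colimits does not imply preserving the weighted colimit $C_2\odot(-)$. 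What you need is that $-\otimes X$ is a real left adjoint, i.e.\ that the closed monoidal structure on $\Wald_\infty^\gd$ is a real one. This is how \cite{hsv} obtains the real structure in the first place, and it is exactly what \cref{prop:subcats_are_monodal_localizations}(2) presupposes.
\end{enumerate}
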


\begin{proof}
This follows since the tensor product of $\Fun_{\Spc^{C_2}}(((\Wald_\infty^\gd)^{\aleph_1})^\op, \Spc^{C_2})$
preserves small colimits componentwise, and restricts to the full subcategory of representables by \cite[Theorem 3.9.2]{heine2024higher}.
\end{proof}

\begin{remark}\label{description_tensor_unit_DWald}
The restricted real symmetric monoidal Yoneda embedding $$ \Wald^\gd_\infty \to \Fun_{\Spc^{C_2}}(((\Wald^\gd_\infty)^{\aleph_1})^\op,\Spc^{C_2})$$
induces a real symmetric monoidal embedding 
$$ \Wald^\gd_\infty \to \D\Wald^\gd_\infty.$$
\end{remark}

\begin{proposition}\label{Dwaldfiss_monoidal}
The symmetric monoidal structure on $\D\Wald_\infty^\gd$ descends to $\D\Wald_\infty^{\gd, \mathrm{fiss}}$.
\end{proposition}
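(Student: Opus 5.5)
The plan is to show that $\D\Wald_\infty^{\gd,\fiss} \subset \D\Wald_\infty^\gd$ is a symmetric monoidal localization. Concretely, I will show that the class of local equivalences is closed under tensoring with arbitrary objects; the induced structure on the local objects then makes the localization functor symmetric monoidal.

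First, recall from \cite[Section 8]{hsv} that $\D\Wald_\infty^{\gd,\fiss}$ is the accessible localization of $\D\Wald_\infty^\gd$ at the set $\{\y(\gamma_3^\caC) \mid \caC \in (\Wald_\infty^\gd)^{\aleph_1}\}$. The tensor product of $\D\Wald_\infty^\gd$ preserves small colimits in each variable, and $\D\Wald_\infty^\gd$ is generated under sifted colimits by the representables. Hence it suffices to check that $\y(\gamma_3^\caC) \otimes \y(\caD)$ is a local equivalence for all $\caC,\caD \in (\Wald_\infty^\gd)^{\aleph_1}$. The class of local equivalences is strongly saturated, so closure under tensoring with representables extends to tensoring with all objects.

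Second, I use that the Yoneda embedding $\Wald_\infty^\gd \to \D\Wald_\infty^\gd$ is symmetric monoidal (\cref{description_tensor_unit_DWald}). This identifies $\y(\gamma_3^\caC)\otimes \y(\caD)$ with $\y(\gamma_3^\caC \otimes \caD)$. The problem is thereby reduced to the statement that $\gamma_3^\caC \otimes \caD \colon S(\caC)_3 \otimes \caD \to (\widetilde{\caC\times\caC}\times\caC)\otimes\caD$ becomes invertible after fissile localization.

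Third, this is exactly the argument of \cref{prop:semiadd_loc_is_monoidal}, transplanted from $\mathrm{rThy}$ to $\D\Wald_\infty^\gd$. The section $i^\caC$ gives $(\gamma_3^\caC\otimes\caD)\circ(i^\caC\otimes\caD) = \id$. For the other composite, I use $\theta$ of \cref{constr:theta} and $\chi$ of \cref{rmk:Sconstruction_monoidal}. With these, $(i^\caC\otimes\caD)\circ(\gamma_3^\caC\otimes\caD)$ factors as $\ev_3\circ (i\circ\gamma_3)^{S(\caC)_3\otimes\caD}\circ\chi\circ(\theta\otimes\caD)$. Since $\gamma_3^{S(\caC)_3\otimes\caD}$ is inverted by the localization, this composite becomes $\ev_3\circ\chi\circ(\theta\otimes\caD) = (\ev_3\circ\theta)\otimes\caD = \id$ in the localization. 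That proof only used the naturality of $\theta$, $\chi$, $i$, $\gamma_3$ in $\Wald_\infty^\gd$, so it carries over verbatim once everything is viewed through $\y$.

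The main obstacle I expect is the identification, in the third step, of the factored composite with $(i^\caC\circ\gamma_3^\caC)\otimes\caD$. This requires the compatibility of $\chi$ with $\gamma_3$, $i$ and $\ev_3$, i.e.\ the naturality of $\chi$ in the simplicial variable together with its compatibility with the dualities. This point was already used in \cref{prop:semiadd_loc_is_monoidal}, and I would take it over from there. A further concern is that $\D\Wald_\infty^\gd$ involves only $\aleph_1$-compact generators, so one must check that $S(\caC)_3\otimes\caD$ stays $\aleph_1$-compact. This holds because the tensor product restricts to representables.
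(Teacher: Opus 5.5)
Your proposal is correct and is essentially the paper's argument. The paper's proof only says it is similar to that of \cref{prop:semiadd_loc_is_monoidal}, and you carry out exactly that transfer: you reduce, via colimit-preservation of the tensor product and the symmetric monoidal Yoneda embedding, to the invertibility of $\gamma_3^\caC\otimes\caD$ after localization, and then run the $i$, $\theta$, $\chi$ argument. One point to make explicit: identifying $\ev_3\circ(i\circ\gamma_3)^{S(\caC)_3\otimes\caD}\circ\chi\circ(\theta\otimes\caD)$ with $(i^\caC\circ\gamma_3^\caC)\otimes\caD$ needs, besides the compatibility of $\chi$ with $i$, $\gamma_3$ and $\ev_3$, the direct computation $\ev_3\circ i^{S(\caC)_3}\circ\gamma_3^{S(\caC)_3}\circ\theta\simeq i^\caC\circ\gamma_3^\caC$, which is not a formal consequence of naturality (both send $X\to Y\to Z$ to $X\to X\oplus Y/X\to X\oplus Y/X\oplus Z/Y$).
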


\begin{proof}
The proof is similar to that of \cref{prop:semiadd_loc_is_monoidal}.
\end{proof}

\begin{theorem}\label{motive_lax_sym} The real funcor $\KR'\colon \D\Wald^{\gd,\fiss}_\infty\to \Sp^{C_2}$ is lax symmetric monoidal.
\end{theorem}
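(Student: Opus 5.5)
The plan is to realize $\KR'$ as the image of a tensor unit under a symmetric monoidal localization, mirroring the argument used for $\KR$ on $\mathrm{rThy}$ and then \cref{thm:univ_prop_Sp}. Write $\caM=\D\Wald^{\gd,\fiss}_\infty$. By \cref{Dwaldfiss_monoidal}, $\caM$ is a presentably symmetric monoidal real $\infty$-category. Since $\caM$ is generated under sifted colimits by representables, left adjoint real functors $\caM\to\Spc^{C_2}_\ast$ correspond, via \cref{prop:extension_DWald}, to real theories that invert $\gamma_3$. So we work in the real functor category
$$\Fun^{\L}_{\Spc^{C_2}}(\caM,\Spc^{C_2}_\ast)$$
of real left adjoints. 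Because $\caM$ is presentably symmetric monoidal, this category carries a Day convolution structure. Its tensor unit is the left adjoint extension of the object-space functor $\iota$, i.e.\ the real functor corepresented appropriately. Indeed $\iota$ on $\Wald^\gd_\infty$ is the image of the unit, as in the proof that $\KR$ is the initial $\mathbb{E}_\infty$-algebra in $\Add$, and it inverts $\gamma_3$ up to the needed finite-product conditions. Hence $\iota\colon\caM\to\Spc^{C_2}_\ast$ is lax symmetric monoidal, being the unit of Day convolution (equivalently, a commutative algebra there).

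The next step is to show that the full subcategory of genuine excisive real functors, those inverting $F\to\Omega^{1,1}F\Sigma^{1,1}$, is a symmetric monoidal localization. This follows the pattern of \cref{prop:localization_is_monoidal}. The generating local equivalences are $\mathrm{lan}_{\Sigma^{1,1}Y}(S^{1,1}\wedge Z)\to \mathrm{lan}_Y(Z)$. Tensoring such a map with a generator $\mathrm{lan}_{Y'}(Z')$ produces a map of the same form with $Y\otimes Y'$ and $Z\wedge Z'$. This uses that the tensor product on $\caM$ is a real left adjoint in each variable, so that $\Sigma^{1,1}Y\otimes Y'\simeq\Sigma^{1,1}(Y\otimes Y')$. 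The localization functor is then symmetric monoidal and sends the unit $\iota$ to the unit of the local subcategory. By definition (\cite[Theorem 10.8]{hsv}) this image is $\KR'$, so $\KR'$ is an $\mathbb{E}_\infty$-algebra, i.e.\ lax symmetric monoidal as a functor to $\Spc^{C_2}_\ast$.

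Finally, I would pass to spectra. Genuine excisive functors are in particular $S^{2,1}$-excisive, since $S^{2,1}\simeq S^{1,1}\wedge S^{1,0}$ and reduced sifted-colimit-preserving functors are already $S^1$-excisive after additivization. So \cref{genspec} gives a symmetric monoidal equivalence between the $\Sp^{C_2}$-valued and $\Spc^{C_2}_\ast$-valued versions, and the commutative algebra $\KR'$ lifts uniquely to $\Sp^{C_2}$. The step I expect to be the main obstacle is the first one: showing that the relevant functor category on $\caM$ carries a Day convolution whose unit really is $\iota$, and that left-adjointness interacts well with the localization. The concern is that \cref{genspec} and \cref{prop:localization_is_monoidal} are stated for small sources, whereas $\caM$ is large. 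I would handle this by restricting along the symmetric monoidal embedding of $\aleph_1$-compact generators from \cref{description_tensor_unit_DWald}.
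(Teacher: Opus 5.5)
\textbf{Gap: the unit is misidentified.} The object-space functor $\iota$ does \emph{not} invert $\gamma_3$. On underlying spaces, $\gamma_3$ sends a flag $A\to B\to C$ to its graded pieces, and the fibres are spaces of extension data. For example, for $\caD$ the finite spectra, the fibre over $(A,0,Q)$ is $\map(Q,\Sigma A)$. Forcing this inversion is exactly what additivization has to do, and no finite-product condition makes it automatic. Consequences:
\begin{itemize}
\item $\iota$ does not descend along $L\colon\D\Wald^\gd_\infty\to\caM$ to a left adjoint.
\item It is therefore not the unit of the structure on left adjoints $\caM\to\Spc^{C_2}_\ast$, i.e.\ on $\gamma_3$-inverting real theories. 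That unit is the $\gamma_3$-localization of $\iota$.
\item The functor $\iota\colon\caM\to\Spc^{C_2}_\ast$ of \cite[Theorem 10.8]{hsv} is evaluation at the unit. It is lax symmetric monoidal as a corepresentable, but it is not a left adjoint. So it does not lie in the category where you run the localization argument, and your generator argument for genuine excision, which needs a small source, does not apply to it.
\end{itemize}
Hence your step ``the image of the unit under the genuine excisive localization is $\KR'$ by definition'' is unsupported. The excisive approximation defining $\KR'$ is taken in a different functor category from the one you showed to be a symmetric monoidal localization.

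\textbf{How to repair it.} Take the correct unit: applying both localizations to $\iota$ gives $\add(\iota)=\KR$. You would then still need two facts:
\begin{itemize}
\item the left adjoint extension of $\KR$ to $\caM$ is $\KR'$;
\item commutative algebras in the transported structure are lax symmetric monoidal functors on $\caM$.
\end{itemize}
The second fact is essentially the paper's whole argument, and it makes the rest of your construction unnecessary. Since $\KR$ factors through $L$ and $L\circ\mathrm{incl}\simeq\id$, one has $\KR'\simeq\KR\circ\mathrm{incl}$, with $\KR$ extended to $\D\Wald^\gd_\infty$. The inclusion $\caM\subset\D\Wald^\gd_\infty$ is lax symmetric monoidal as the right adjoint of the symmetric monoidal localization of \cref{Dwaldfiss_monoidal}. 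The extended $\KR$ is lax symmetric monoidal by \cref{thm:univ_prop_Sp} together with the Day convolution extension to $\D\Wald^\gd_\infty$. So the composite is lax symmetric monoidal. This reuses the already established multiplicativity of $\KR$ rather than redoing a localization argument on the large category $\caM$.
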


\begin{proof}
By \cite[Theorem 10.8]{hsv}, we know that the real functor $\KR$ factors as below
\[
\begin{tikzcd}
\Wald_\infty^\gd\ar[rr, "\KR"]\ar[dr]        &&\Sp^{C_2}\\
&\D\Wald^{\gd,\fiss}_\infty\ar[ur, "\KR'"']  &
\end{tikzcd}
\]

Thus, the functor $\KR'\colon \D\Wald^{\gd,\fiss}_\infty\to \Sp^{C_2}$ factors
as $\D\Wald^{\gd,\fiss}_\infty\to \D\Wald^\gd_\infty$ followed by 
$\KR\colon \D\Wald^\gd_\infty\to \Sp^{C_2}$.
To conclude the proof it's enough then to note that the real functors $\D\Wald^{\gd,\fiss}_\infty\to \D\Wald^\gd_\infty$ and
$\KR\colon \D\Wald^\gd_\infty\to \Sp^{C_2}$ are lax symmetric monoidal.

Let $L\colon \D\Wald^{\gd}_\infty \to \D\Wald^{\gd,\fiss}_\infty$ be the localization real functor.
Then $\KR': \D\Wald^{\gd,\fiss}_\infty \to \Spc^{C_2}$

\end{proof}

\begin{proposition}\label{prop:KR'_initial}The real functor  $\KR'$ is initial among left adjoint and genuine excisive lax symmetric monoidal real functors.
\end{proposition}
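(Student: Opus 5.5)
The plan is to reduce the statement to the universal property of $\KR$ in \cref{thm:univ_prop_Sp}, by translating between functors out of $\caM=\D\Wald^{\gd,\fiss}_\infty$ and functors out of $\Wald^\gd_\infty$. Concretely, I would establish an equivalence of symmetric monoidal real $\infty$-categories between
\begin{enumerate}
\item[(a)] left adjoint, genuine excisive real functors $\caM\to\Sp^{C_2}$, and
\item[(b)] additive real theories $\Wald^\gd_\infty\to\Sp^{C_2}$, that is, $\Add_{\Sp^{C_2}}$.
\end{enumerate}
The equivalence is given by restriction along the composite $\Wald^\gd_\infty\to\D\Wald^\gd_\infty\xrightarrow{L}\caM$. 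Both categories carry Day-convolution-type structures. Commutative algebras in (a), for the pointwise structure induced from the presentably symmetric monoidal structure on $\caM$ of \cref{Dwaldfiss_monoidal}, are the lax symmetric monoidal left adjoint genuine excisive functors. It then suffices to show that $\KR'$ corresponds to $\KR$, the initial $\mathbb{E}_\infty$-algebra, under this equivalence.

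The equivalence itself would be built in three steps.
\begin{enumerate}
\item By \cref{prop:extension_DWald}, real theories $(\Wald^\gd_\infty)^{\aleph_1}\to\Sp^{C_2}$ correspond to left adjoint reduced real functors $\D\Wald^\gd_\infty\to\Sp^{C_2}$. By \cref{Dayco}, combined with the description of $\D\Wald^\gd_\infty$ as the localization of real presheaves at finite products and cotensors with $C_2$ (\cref{prop:subcats_are_monodal_localizations}), this correspondence is symmetric monoidal. Lax monoidal functors out of $\Wald^\gd_\infty$ thus match lax monoidal left adjoints out of $\D\Wald^\gd_\infty$.
\item Since $\caM\subset\D\Wald^\gd_\infty$ is the symmetric monoidal localization at the $\y(\gamma_3)$ (\cref{Dwaldfiss_monoidal}), left adjoints out of $\caM$ are exactly the left adjoints out of $\D\Wald^\gd_\infty$ inverting the $\gamma_3$. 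Because $L$ is symmetric monoidal, precomposition with $L$ is fully faithful on lax symmetric monoidal functors.
\item Genuine excisiveness is a condition, and it is compatible with these identifications. The relevant localization is monoidal by \cref{prop:localization_is_monoidal} together with \cref{lem:pullback_symetric_loc}, exactly as in the proof that $\Add\subset\mathrm{rThy}$ is a symmetric monoidal localization.
\end{enumerate}

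Next I would identify $\KR'$ with the image of $\KR$. By \cite[Theorem 10.8]{hsv}, $\KR'$ is the genuine excisive approximation of $\iota\colon\caM\to\Spc^{C_2}$ and it restricts to $\KR$. Moreover, $\iota$ is the tensor unit of the functor category, namely the functor represented by the unit, using the description of the unit in \cref{description_tensor_unit_DWald}. Since genuine excisive approximation is a symmetric monoidal localization, it carries the unit to the unit. Hence $\KR'$ is the unit in (a), and therefore the initial commutative algebra there. Lifting from $\Spc^{C_2}_*$ to $\Sp^{C_2}$ uses \cref{genspec}, as in the proof of \cref{thm:univ_prop_Sp}.

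The main obstacle I expect is step 1: showing that restriction along the Yoneda embedding is a symmetric monoidal equivalence, rather than merely an equivalence of underlying categories, when the source is the sifted-colimit completion $\D\Wald^\gd_\infty$ and not the full presheaf category. My first attempt would be to present $\D\Wald^\gd_\infty$ as the accessible symmetric monoidal localization of $\Fun_{\Spc^{C_2}}(((\Wald^\gd_\infty)^{\aleph_1})^\op,\Spc^{C_2})$ from \cite[Proposition 8.20]{hsv}, and then apply \cref{Dayco} together with the universal property of symmetric monoidal localizations in $\Calg(\Pr^\L)$.
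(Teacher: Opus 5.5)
Your proposal is correct and takes the paper's route: the paper's proof is exactly the equivalence $\Fun^{L,\exc}_{\Spc^{C_2}}(\D\Wald^{\gd,\fiss}_\infty,\Sp^{C_2})\simeq\Add_{\Sp^{C_2}}$ combined with the initiality of $\KR$ from \cref{thm:univ_prop_Sp}, and your steps 1--3 and the identification of $\KR'$ with the unit spell out the monoidal compatibility that the paper leaves implicit. One small correction: the structure on (a) whose commutative algebras are the lax symmetric monoidal functors is Day convolution, as you say elsewhere, not the pointwise structure (pointwise tensor products of left adjoints are not even left adjoints).
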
 

\begin{proof}
    It follows from the initiality of $\KR$ in \cref{thm:univ_prop_Sp},  and the canonical equivalence
    $$ \Fun^{L, \exc}_{\Spc^{C_2}}(\D\Wald^{\gd,\fiss}_\infty,\Sp^{C_2}) \simeq  \Add_{\Sp^{C_2}},$$
    where the superscripts $L$ and $\mathrm{exc}$ refer to left adjoint and excisive respectively.
\end{proof}

\bibliographystyle{alpha}
\bibliography{add}

@article{HHR_Kervaire,
  title={On the nonexistence of elements of {Kervaire} invariant one},
  author={Michael A. Hill and Michael J. Hopkins and Douglas C. Ravenel},
  journal={Annals of Mathematics},
  year={2009},
  volume={184},
  pages={1-262}
}

@article{barwick.mult,
  title={Multiplicative structures on algebraic {K}-theory},
  author={Barwick, Clark},
  journal={Documenta mathematica},
  volume={20},
  pages={859},
  year={2015},
  publisher={European Mathematical Society (EMS)}
}

@Article{barwick.wald,
  Title                    = {On the algebraic {$K$}-theory of higher categories},
  Author                   = {Barwick, Clark},
  Journal                  = {J. Topol.},
  Year                     = {2016},
  Number                   = {1},
  Pages                    = {245--347},
  Volume                   = {9},

  Doi                      = {10.1112/jtopol/jtv042},
  Fjournal                 = {Journal of Topology},
  ISSN                     = {1753-8416},
  Mrclass                  = {18F25 (19D10)},
  Mrnumber                 = {3465850},
  Mrreviewer               = {Christoph Winges},
  Url                      = {https://doi.org/10.1112/jtopol/jtv042}
}

@Article{barwick.exact, 
    title={On exact $\infty$-categories and the {T}heorem of the {H}eart}, volume={151}, DOI={10.1112/S0010437X15007447}, number={11}, journal={Compositio Mathematica}, publisher={London Mathematical Society}, author={Barwick, Clark}, year={2015}, pages={2160–2186}}

@article{elmendorf-mandell,
title = {Rings, modules, and algebras in infinite loop space theory},
journal = {Advances in Mathematics},
volume = {205},
number = {1},
pages = {163-228},
year = {2006},
issn = {0001-8708},
doi = {https://doi.org/10.1016/j.aim.2005.07.007},
url = {https://www.sciencedirect.com/science/article/pii/S0001870805001957},
author = {A.D. Elmendorf and M.A. Mandell}
}

@Article{bgt,
  Title                    = {A universal characterization of higher algebraic {$K$}-theory},
  Author                   = {Blumberg, Andrew J. and Gepner, David and Tabuada,
 Gon{\c{c}}alo},
  Journal                  = {Geom. Topol.},
  Year                     = {2013},
  Number                   = {2},
  Pages                    = {733--838},
  Volume                   = {17},

  Doi                      = {10.2140/gt.2013.17.733},
  Fjournal                 = {Geometry \& Topology},
  ISSN                     = {1465-3060},
  Mrclass                  = {19D10 (18D20 19D25 19D55 55N15 55U40)},
  Mrnumber                 = {3070515},
  Mrreviewer               = {Ross Staffeldt},
  Url                      = {http://dx.doi.org/10.2140/gt.2013.17.733}
}

@article{gurski-johnson-osorno-Ktheory_2cats,
title = {K-theory for 2-categories},
journal = {Advances in Mathematics},
volume = {322},
pages = {378-472},
year = {2017},
issn = {0001-8708},
doi = {https://doi.org/10.1016/j.aim.2017.10.011},
url = {https://www.sciencedirect.com/science/article/pii/S0001870816303942},
author = {Nick Gurski and Niles Johnson and Angélica M. Osorno}
}

@Unpublished{hesselholt-madsen,
  Title                    = {Real algebraic {K}-theory},
  Author                   = {Hesselholt, Lars and Madsen, Ib},
  Note                     = {available at http://www.math.ku.dk/~larsh/papers/s05/}
}

@article{heine2024higher,
  title={The higher algebra of weighted colimits},
  author={Heine, Hadrian},
  journal={arXiv:2406.08925},
  year={2024}
}

@Unpublished{schlichting.derived,
  Title                    = {Hermitian {K}-theory, derived equivalences and {K}aroubi's fundamental theorem},
  Author                   = {Schlichting, Marco},
  Note                     = {available at http://homepages.warwick.ac.uk/~masiap/research/gwdg6Revised1.pdf}
}

@article{Calmes_etal1,
  title={Hermitian {K}-theory for stable $\infty$-categories {I}: Foundations: B. Calm{\`e}s et al.},
  author={Calm{\`e}s, Baptiste and Dotto, Emanuele and Harpaz, Yonatan and Hebestreit, Fabian and Land, Markus and Moi, Kristian and Nardin, Denis and Nikolaus, Thomas and Steimle, Wolfgang},
  journal={Selecta mathematica},
  volume={29},
  number={1},
  pages={10},
  year={2023},
  publisher={Springer}
}

@article {mandell_inverseK,
    AUTHOR = {Mandell, Michael A.},
     TITLE = {An inverse {$K$}-theory functor},
   JOURNAL = {Doc. Math.},
  FJOURNAL = {Documenta Mathematica},
    VOLUME = {15},
      YEAR = {2010},
     PAGES = {765--791},
      ISSN = {1431-0635,1431-0643},
   MRCLASS = {19D23 (18D10 55P42 55P47)},
  MRNUMBER = {2735988},
MRREVIEWER = {Daniel\ A.\ Ramras},
}

@Unpublished{hsv,
  Title                    = {Real {K}-theory for infinity {W}aldhausen categories with genuine duality},
  Author                   = {Heine, Hadrian and Spitzweck, Markus and Verdugo, Paula},
  Note                     = {arXiv:1911.11682},
  Year                     = {2019},
}

@unpublished{comparisonKth,
    AUTHOR = {Heine, Hadrian and Spitzweck, Markus and Verdugo, Paula},
    TITLE = {An equivalence between the real S- and the hermitian Q-construction},
    YEAR = {2024},
    NOTE = {arXiv:2301.2410.07846},
    URL = {https://arxiv.org/abs/2410.07846},
}

@article{HinichDayConvolution,
  title={Colimits in enriched $\infty$-categories and {Day} convolution},
  author={Vladimir Hinich},
  journal={Theory and Applications of Categories},
  year={2023}
}

@book {May_loop_spaces,
    AUTHOR = {May, J. P.},
     TITLE = {The geometry of iterated loop spaces},
    SERIES = {Lecture Notes in Mathematics},
    VOLUME = {Vol. 271},
 PUBLISHER = {Springer-Verlag, Berlin-New York},
      YEAR = {1972},
     PAGES = {viii+175},
   MRCLASS = {55D35},
  MRNUMBER = {420610},
MRREVIEWER = {J.\ Stasheff},
}

@article {Segal_categories_cohomology,
    AUTHOR = {Segal, Graeme},
     TITLE = {Categories and cohomology theories},
   JOURNAL = {Topology},
  FJOURNAL = {Topology. An International Journal of Mathematics},
    VOLUME = {13},
      YEAR = {1974},
     PAGES = {293--312},
      ISSN = {0040-9383},
   MRCLASS = {55B20},
  MRNUMBER = {353298},
MRREVIEWER = {J.\ P.\ May},
       DOI = {10.1016/0040-9383(74)90022-6},
       URL = {https://doi.org/10.1016/0040-9383(74)90022-6},
}

@incollection {inna_multiplicative,
    AUTHOR = {Zakharevich, Inna},
     TITLE = {The category of {W}aldhausen categories is a closed
              multicategory},
 BOOKTITLE = {New directions in homotopy theory},
    SERIES = {Contemp. Math.},
    VOLUME = {707},
     PAGES = {175--194},
 PUBLISHER = {Amer. Math. Soc., [Providence], RI},
      YEAR = {[2018] \copyright 2018},
      ISBN = {978-1-4704-3774-9},
   MRCLASS = {18D99 (18D05 18D10 18D15 18F25)},
  MRNUMBER = {3807747},
MRREVIEWER = {Walter\ Tholen},
       DOI = {10.1090/conm/707/14259},
       URL = {https://doi.org/10.1090/conm/707/14259},
}

\end{document}